\newcommand{\revised}[1]{\textcolor{black}{#1}}
\newcommand{\revisedTC}[1]{\textcolor{black}{#1}}
\def\equationautorefname#1#2\null{%
  #1(#2\null)%
}
\newcommand{\R}{\mathbb{R}}
\newcommand{\N}{\mathbb{N}}
\newcommand{\cG}{\mathcal{G}}
\newcommand{\cS}{\mathcal{S}}
\newcommand{\cL}{\mathcal{L}}
\newcommand{\cW}{\mathcal{W}}
\newcommand{\cD}{\mathcal{D}}
\newcommand{\lp}{\left(}
\newcommand{\rp }{\right)}
\DeclareMathOperator{\Tr}{tr}
\DeclareMathOperator{\Sp}{Spec}
\newcommand{\pd}{\partial}
\DeclareMathOperator*{\argmin}{arg\,min}
\newcommand{\tpmod}[1]{{\@displayfalse\pmod{#1}}}
\numberwithin{equation}{section}
\newcolumntype{C}{>{\raggedright\arraybackslash}p{1.5cm}<{}}
\newtheorem{theorem}{Theorem}[section]
\newtheorem{corollary}{Corollary}[section]	
\newtheorem{example}{Example}[section]
\newtheorem{remark}{Remark}[section]	
\newtheorem{lemma}{Lemma}[section]
\newcommand{\blockOne}[1]{
  \underbrace{\begin{matrix} -1 & 1 & \cdots & -1 & 1\end{matrix}}_{#1}
}
\newcommand{\blockTwo}[1]{
  \underbrace{\begin{matrix} 1 & -1 & \cdots & 1 & -1\end{matrix}}_{#1}
}
\crefname{figure}{figure}{figures}
\title[Modelling polarity-driven laminar patterns with mixed signalling mechanisms]{Modelling polarity-driven laminar patterns in bilayer tissues with mixed signalling mechanisms}
\author[Moore]{Joshua W. Moore$^{1}$} 
\email{moorej16@cardiff.ac.uk}
\address{$^{1}$School of Mathematics, Cardiff University, Senghennydd Rd, Cardiff, CF24 4AG, UK}
\author[Dale]{Trevor C.  Dale$^{2}$}
\address{$^{2}$School of Biosciences, Cardiff University, Museum Ave, Cardiff, CF10 3AX, UK}
\author[Woolley]{Thomas E.  Woolley$^{1}$}
 \date{}
\begin{document}

\vspace{0.5cm}
\begin{abstract}
Recent advances in high-resolution experimental methods have highlighted the significance of cell signal pathway crosstalk and localised signalling activity in the development and disease of numerous biological systems.  The investigation of multiple signal pathways often introduces different methods of cell-cell communication,  i.e.  contact-based or diffusive signalling,  which generates both a spatial and temporal dependence on cell behaviours.  Motivated by cellular mechanisms that control cell-fate decisions in developing bilayer tissues,  we use dynamical systems coupled with multilayer graphs to analyse the role of signalling polarity and pathway crosstalk in fine-grain pattern formation of protein activity.  Specifically,  we study how multilayer graph edge structures and weights influence the layer-wise (laminar) patterning of cells in bilayer structures,  which are commonly found in glandular tissues.  We present sufficient conditions for existence,  uniqueness and instability of homogeneous cell states in the large-scale spatially discrete dynamical system.  \revisedTC{Using methods of pattern templating by graph partitioning to generate quotient systems in combination with concepts from monotone dynamical systems,  we exploit the extensive dimensionality reduction to provide existence conditions for the polarity required to induce fine-grain laminar patterns with multiple spatially dependent intracellular components.  We then explore the spectral links between the quotient and large-scale dynamical systems to extend the laminar patterning criteria from existence to convergence for sufficiently large amounts of cellular polarity in the large-scale dynamical system,  independent of spatial dimension and number of cells in the tissue.}
\par
\vspace{0.25cm}
\noindent \textbf{Keywords:} Pattern formation,  Monotone systems,  Pathway crosstalk,  Spectral graph theory,  Cell polarity.
\end{abstract}
\maketitle

\section{Introduction}

%This needs to be revised somewhere  - condense into on paragraph%
Cell-fate determination is the process of stem,  or progenitor, cell commitment to transition to a differentiated state with adapted cellular functions \cite{wagers2002cell}.  This process enables the generation of specialised cell populations during organ development as cells propagate through lineage structures with each cell-fate choice.  Cell-fate decisions are typically regulated by tightly orchestrated intracellular protein cascades often termed genetic regulatory networks (GRNs) that describe complex intracellular protein interactions that depend on both the local cellular environment and intrinsic genetic properties of the cell \cite{saez2022dynamical}.   Subsequently,  the investigation of the autonomous spatial organisation of cell-fate biomarkers (active proteins for cell-fate regulation) in developing tissues has been of significant interest as a strategy to elucidate the intracellular mechanisms that govern such cellular behaviour bifurcations \cite{artavanis1991choosing,Lilja2018,perrimon2012signaling}.  \par

%introduce ductual structures - why are they important - where are they found
%laminar patterns of bio markers
% Opposed to the simple patterns seen -> the mechanims that generate and maintence these cell-fate choices are widely unknown...
Ductal structures commonly found in glandular tissues possess some of the most simple cell-fate biomarker spatial patterning.  Primarily comprised of just two cell types,  these tissues produce branching morphologies with a consistent bilayer ring of layer-wise contrasting cell types (Figure \ref{fig:Bilayer_intro_fig}A).  To form the bilayer rings,  undifferentiated cells self-organise to autonomously produce distinct laminar patterns of opposing cell-fate biomarkers to promote the substance transportation functions required of the organ \cite{hogan2002molecular}.  Common examples of these laminar pattern features can be found in glandular tissues,  such as: the mammary,  salivary and sweat glands where the emergence of bilayer cell-type expression initiates the formation of ductal features \cite{de2017overview,saga2002structure, sumbal2020primary}.  \par

% add cell-fate biomarker figure
\begin{figure}[t!]

\centering
\begin{subfigure}[b]{\textwidth}
         \centering
         \includegraphics[width=\textwidth]{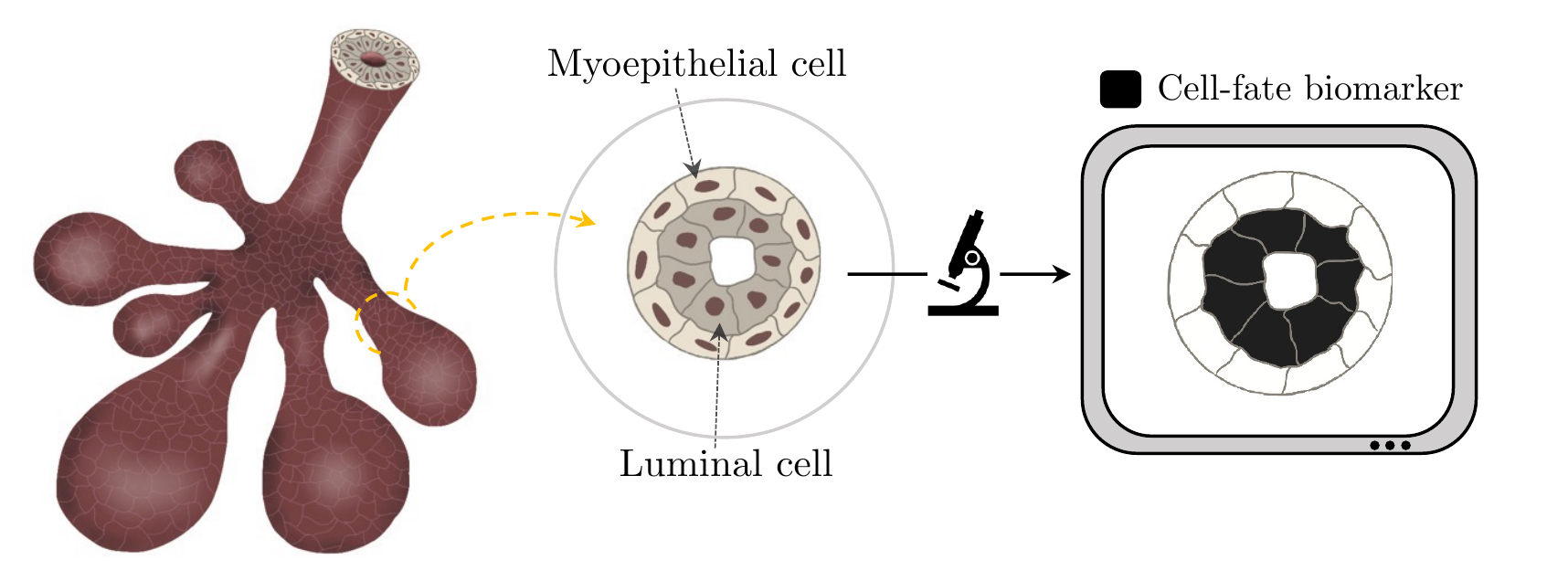}
         \caption{Bilayer structures in glandular tissues.}
\end{subfigure}

     \begin{subfigure}[b]{\textwidth}
     \vspace{1em}
         \centering
         \includegraphics[width=0.9\textwidth]{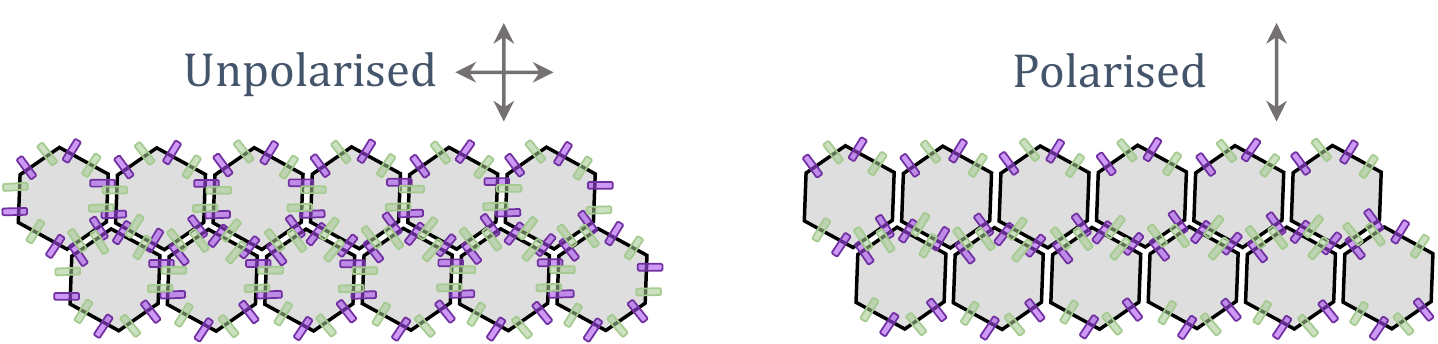}
         \caption{Cell signalling polarity.}
\end{subfigure}

\caption{Schematic representation of the bilayer cellular architecture of ductal tissues and cellular polarity.  (A) Branching tissue structures produce bilayer ducts of layer-wise contrasting epithelial cells with the outer and inner layers of myoepithelial and luminal cells,  respectively,  to facilitate the production and secretion for the transportation of substances such as sweat,  saliva and breast milk \cite{de2017overview,saga2002structure,sumbal2020primary}.  Imaging the ducts for key cell-fate biomarkers highlight the presence of laminar patterns intracellular protein expression.  Some well-established cell-fate biomarkers are p63 and Notch for myoepithelial and luminal cells in mammary and sweat glands \cite{Lloyd-Lewis2019, kurata2014isolation}.  (B) An example contact-dependent signalling mechanism with activators and receptors are represented using purple and green rectangles depicting the effect of layer-wise polarity in a bilayer of cells.    } \label{fig:Bilayer_intro_fig}

\end{figure}

Though the spatial patterning of the cell-fate biomarkers can be simple in bilayer tissues,  the cellular regulatory mechanisms that generate these cell states are complex,  often involving multiple inter-linked GRNs,  which is referred to as pathway crosstalk \cite{jimenez2012stem}.  Consequently,  pathway crosstalk may induce multiple modes of independent cell-cell communication channels,  such as contact-based (juxtacrine) or short-range diffusive (paracrine) protein interactions between local cells \cite{Lloyd-Lewis2019,  jarde2016wnt}.  Thus,  cells receive local tissue information from a range of sources to consistently select the appropriate cell type for functional ductal formation.  \par

%be broad with the term polarity
\revisedTC{In addition to the rich interactions of intracellular GRNs,  a common control mechanism present in developing tissues is cellular polarity.  In a broad sense,  cell polarity is the asymmetry of the shape or molecular distribution of the cell \cite{campanale2017development}.  In this study we focus biochemical asymmetries that are associated with localisation of signalling activators and receptors within,  or on,  the surface of the cell and refer to this as polarity herein \cite{buckley2022apical}.  In general, polarity in cell signalling proteins causes anisotropic communication flow between cells and therefore specifies a niche of adjacent interaction cells,  as shown in Figure \ref{fig:Bilayer_intro_fig}B as an example of apical-basal polarity in an epithelial bilayer.  The role of polarity is well-established in mechanical processes of mammalian development such as division and adhesion,  to ensure consistent morphological features of the tissue during growth \cite{vorhagen2014mammalian}. 
 }\par
 
\revisedTC{
There is growing evidence in a variety of biological systems where polarity also acts as a spatial coordinator of cell-fate specification and which promotes mechanical feedback loops to preserve local cell types for healthy tissue function \cite{motegi2020novel}.  However,  the precise role polarity has in influencing intracellular kinetics that govern cell-fate choices is widely unknown,  since cell-cell interactions compound with complex pathway crosstalk \cite{vorhagen2014mammalian}.  Therefore,  in this study we explore the interplay of polarity and multiple cell-cell signalling mechanisms associated with pathway crosstalk in generating laminar patterns of biomarkers,  conforming with the process of cell-fate determination of developing bilayer tissues. 
}\par

Following Turing's seminal paper in 1952 \cite{turing1952chemical},  the majority of theoretical results of pattern formation in developmental biology focus on diffusion-driven instabilities of reaction-diffusion (RD) systems \cite{krause2021introduction,  woolley2017turing}.  RD systems rely on the assumption that cells communicate using short-range and/or long-range paracrine signalling mechanisms,  namely the local diffusion of proteins coupled with intracellular kinetics.   However,  there exists many pattern forming biological systems that rely on non-diffusive,  juxtacrine communication,  such as lateral-inhibition mechanisms,  where adjacent cells inhibit each other from converging to the same state,  facilitating fine-grain pattern formation \cite{GilbertScottF.2016Db,sprinzak2010cis}.  \par

The fundamental differences in paracrine and juxtacrine signalling motivate contrasting modelling approaches.  The diffusion process in paracrine signalling extends to a spatial continuum limit,  generating small systems of partial differential equations (PDEs) allowing protein patterns to form over multiple cell lengths to represent phenomena such as morphogen gradients over the tissue \cite{woolley2017turing}.  Whereas the discrete nature of juxtacrine signalling induced by membrane contact necessitates the use of spatially discrete systems of ordinary differential equations (ODEs) \cite{Wearing2000}.  Subsequently,  these contrasting modelling paradigms restrict the specific continuum and discrete approaches to pattern analysis in systems where both diffusive and non-diffusive mechanisms are present.  \par

Graphs representing spatially discrete analogues of diffusive mechanisms have previously been employed to homogenise the analytical approaches to pattern formation and,  further,  investigate cell structure on pattern emergence \cite{horsthemke2004network}.  That is,  graph vertices depict cells and edges are drawn between cells if they are communicating via diffusive proteins.  Critically,  this approach preserves the concept of cell identity within diffusive models and transforms the systems of PDEs into much larger systems of ODEs,  consistent with the juxtacrine model formulation.  However,  the central theme of pattern analysis is understanding the conditions that yield the degradation of stable homogeneity of the system and is typically conducted via linear stability analysis with coupled spatio-temporal components \cite{Wearing2001,  Collier1996,  murray2003ii}.  Consequently,  the high-dimension of these ODE descriptions and required nonlinear kinetics of multicellular domains limit analytical approaches intractable which lead to many studies focusing on spatially reduced systems accompanied by numerical simulations for the larger cellular domains \cite{Collier1996,Wearing2000,Wearing2001}.  Critically,  the analysis conducted on such spatially reduced models have been shown to be insufficient for predicting the types of patterning observed numerically \cite{Wearing2001},  with similar results for cell-resolution discretised diffusive systems \cite{moore2005localized}.  \par

%review of interconnected dynamical systems for pattern analysis of large-scale dynamical systems
\revised{Adopting concepts from systems engineering,  the application of interconnected dynamical systems theory was employed in \cite{Arcak2013} to derive analytic pattern formation conditions for juxtacrine models,  independent of the number of cells and therefore the size of the ODE system.  Namely,  cells were treated as input-output (IO) components components within a circuit,  i.e.  cells receive signals and then produce response signal to other connected cells. This approach recasts the large-scale ODE system in a macroscopic perspective to analyse the behaviour of only the directly spatially-dependent intracellular proteins using signal transfer functions.  Furthermore,  edge symmetries of the cell-cell connectivity graphs were exploited in \cite{RufinoFerreira2013} to develop methods of graph partitioning to form quotient graphs that represent a pattern template.  Embedding the intracellular ODE systems defined by the GRNs within these quotient graphs produce a quotient interconnected dynamical system which are comparatively significantly smaller in dimension.  These quotient systems were then used to provide pattern existence conditions for prescribed cellular patterns for interconnected juxtacrine models.  These methods of pattern predictions were later extended in \cite{gyorgy2017pattern} and \cite{gyorgy2017patternMulti} to simultaneously couple diffusive and non-diffusive signalling mechanisms within the interconnected dynamical systems framework using directed multilayer graphs,  namely graphs with unidirectional edges connected to cells with multiple-input and output signals. } \revisedTC{ However,  the influence of edge weights with on pattern existence and convergence in these multi-channel interconnected systems is yet to be investigated.  } \par   

The spatial scalability of the interconnected methods of pattern analysis follows from the theory of monotone dynamical systems \cite{Angeli2003}.  Provided the intracellular proteins regulated by the prescribed GRN react monotonically to intercellular stimuli,  then global dynamics become predictable in a closed-loop of cells and facilitates the introduction of control theoretic tools for pattern stability \cite{RufinoFerreira2013}.  Although,  the restriction to bipartite connectivity graphs was imposed in \cite{Arcak2013} and \cite{gyorgy2017patternMulti} as a sufficient measure to preserve the monotonic behaviour of lateral-inhibition models in the large-scale forms,  these restrictions limit the biological applications.  Such conditions can be relaxed when seeking pattern existence in quotient systems as demonstrated in \cite{RufinoFerreira2013} but how such behaviour translates to the large-scale counterpart is not fully understood.

We have previously analysed the role of polarity in laminar pattern formation using interconnected methods for a single juxtracrine signalling mechanism \cite{moore2021algebraic}.  In this study,  we generalise and extend these results to include multiple signalling mechanisms of any type using a multilayer graph approach as defined in \cite{gyorgy2017patternMulti}.  Namely,  we explore the interplay of multilayer network topology and edge weights in laminar pattern formation in bilayer tissues using dynamical systems of generic competitive kinetics.  \par

Initially,  we present conditions for the existence,  uniqueness and instability of a homogeneous steady state for large-scale multi-input-multi-output (MIMO) dynamical system which extends the conditions of \cite{gyorgy2017patternMulti} to yield analytically applicable statements for low-spatial order intracellular GRNs.  Thereafter we use methods of multilayer graph partitioning to derive polarity conditions for the existence of laminar patterning in large-scale systems.  Critically,  we demonstrate the graph commutativity requirements imposed in \cite{gyorgy2017patternMulti} for simultaneous diagonalisation can be relaxed when seeking patterns of only two states,  allowing a broader range of quotient connectivities to be explored.    \par

Next,  we investigate the spectral links between quotient and large-scale dynamical systems.  We demonstrate positional changes of the eigenvalues associated with laminar patterns in the multigraphs are dependent on the amount of polarity for non-bipartite graphs.  We then discuss the implications of spectral rearrangements with respect to bipartite graphs and laminar patterning.  Finally,  combining our insights from the spectral rearrangements and quotient system analysis we explore the convergence of laminar patterns in the associated large-scale dynamics systems.\par  

The structure of the study is as follows.  In Section \ref{sec:model_definition} we define the large-scale interconnected dynamical system analysed in this study.  In Section \ref{sec:large_Scale_instability} we present conditions for the existence,  uniqueness and instability of a homogeneous steady state for large-scale MIMO dynamical system.  Our main results are presented in Section \ref{sec:laminar_converge} where we introduce the necessary results from monotone dynamical systems in Section \ref{sec:monotone_intro} and then present conditions for the existence of laminar patterning in Section \ref{sec:quotient_pattern}.  In Section \ref{sec:spec_links},  we demonstrate positional changes of the eigenvalues associated with laminar patterns in the multigraphs are dependent on the amount of polarity for non-bipartite graphs.  Finally, in Section \ref{sec:global_convergence},  we present sufficient polarity dependent conditions for the convergence of laminar patterns in the large-scale systems.

\section{Existence of cellular heterogeneity}
In this section,  we are interested in deriving conditions for the existence and instability of a homogeneous steady state (HSS) of a large-scale dynamical system that describes intracellular kinetics within a tissue of cells.  First, we define the types of interconnected dynamical systems considered in this study,  namely,  coupling the multiple input and output signal dynamics of individual cells using weighted connectivity graphs associated with each respective signalling mechanism.  Thereafter,  we exploit the repetitive structure of large-scale interconnected dynamical systems to provide analytically tractable conditions for the existence,  uniqueness and stability of the HSS that is necessary for the investigation of spatially-driven cellular heterogeneity.

\subsection{The signal polarity interconnected system for bilayer geometries with multiple signal mechanisms}\label{sec:model_definition}$\,$ \\
\revised{
Consider a large-scale interconnected dynamical systems representing $N$ spatially discrete cells,  each containing $n$ intracellular proteins.  Namely,  for each cell $i \in \{ 1,...,N \}$,   let $\bm{x}_{i}  = [x_{i,1},...,x_{i,n}]^{T} \in X \subset \R_{\geq 0}^{n}$ be the concentration of the intracellular proteins.  The cellular signal inputs and outputs are defined by $\bm{u}_{i} = [u_{i,1},...,u_{i,r}]^{T},  \, \bm{y}_{i} = [y_{i,1},...,y_{i,r}]^{T}  \in U,Y \subset \R_{\geq 0}^{r}$,  respectively,  for $1 \leq r \leq n$.  The interconnected ODE system has the form
\begin{align} \label{eqn:IO_system}
\dot{\bm{x}}_{i} &= \bm{f} \lp \bm{x}_{i}, \bm{u}_{i} \rp, \\
 \bm{y}_{i} & = \bm{h} \lp \bm{x}_{i} \rp, 
\end{align}
where $\dot{\bm{x}}_{i}$ represents the derivative with respect to time.  The function $\bm{f}: X \times U \rightarrow X$ defines the intracellular protein dynamics which are dependent on external stimuli, $\bm{u}_{i}$,  produced by connected cells.  We define cellular connectivity in terms of multiple signalling mechanisms later in this section.  Furthermore,  $\bm{h}: X \rightarrow Y$ describes the translation of intracellular dynamics to signal outputs of the cell.  We assume that both functions $\bm{f}\lp \cdot \rp$ and $\bm{h} \lp \cdot \rp$ are both $\mathcal{C}^{2}$ over their respective domains to ensure the continuity of the corresponding linearised system that is required for the interconnected pattern analysis in Section \ref{sec:laminar_converge}.  The structure of the IO system (\ref{eqn:IO_system}) in context of the tissue is shown in Figure \ref{fig:Mimo_overview}.  For convenience when discussing tissue behaviour,  we define the large-scale vectorised counterparts of the intracellular state variables,  signal inputs and outputs by $\bm{x} = [\bm{x}_{1}, ...,\bm{x}_{n}]^{T}$,  $\bm{u} = [\bm{u}_{1}, ...,\bm{u}_{r}]^{T}$ and $\bm{y} = [\bm{y}_{1}, ...,\bm{y}_{r}]^{T}$. } \par

\revised{
For the transition of signal outputs to inputs,  we assume that each output signal is independent and defines a linear relationship between output and input signals.  Let $V \coloneqq \{ v_{1},...,v_{N} \}$,  be vertices representing the cells in the tissue,  then for each output signal $y_{i,j}$ there is an associated connectivity graph $\cG_{j} = \cG_{j}\lp V, E_{j}  \rp $,  where $E_{j}$ is the set of edges for each output signal mechanism $1 \leq j \leq r$.  Note that the vertex set $V$ is identical for each connectivity graph whereas edge structure may differ between the respective graphs to allow for different signalling mechanisms within the IO system (\ref{eqn:IO_system}).  For example,  the cellular connectivity graphs of contact-dependent and long-diffusion mechanisms have potentially different edge structures as it is expected that the average degree of the contact-based graph is less than that of a diffusive mechanism due to the physical constraints of cellular junctions (Figure \ref{fig:Mimo_overview}). } \par

\begin{figure}[h!]
\includegraphics[width=\textwidth]{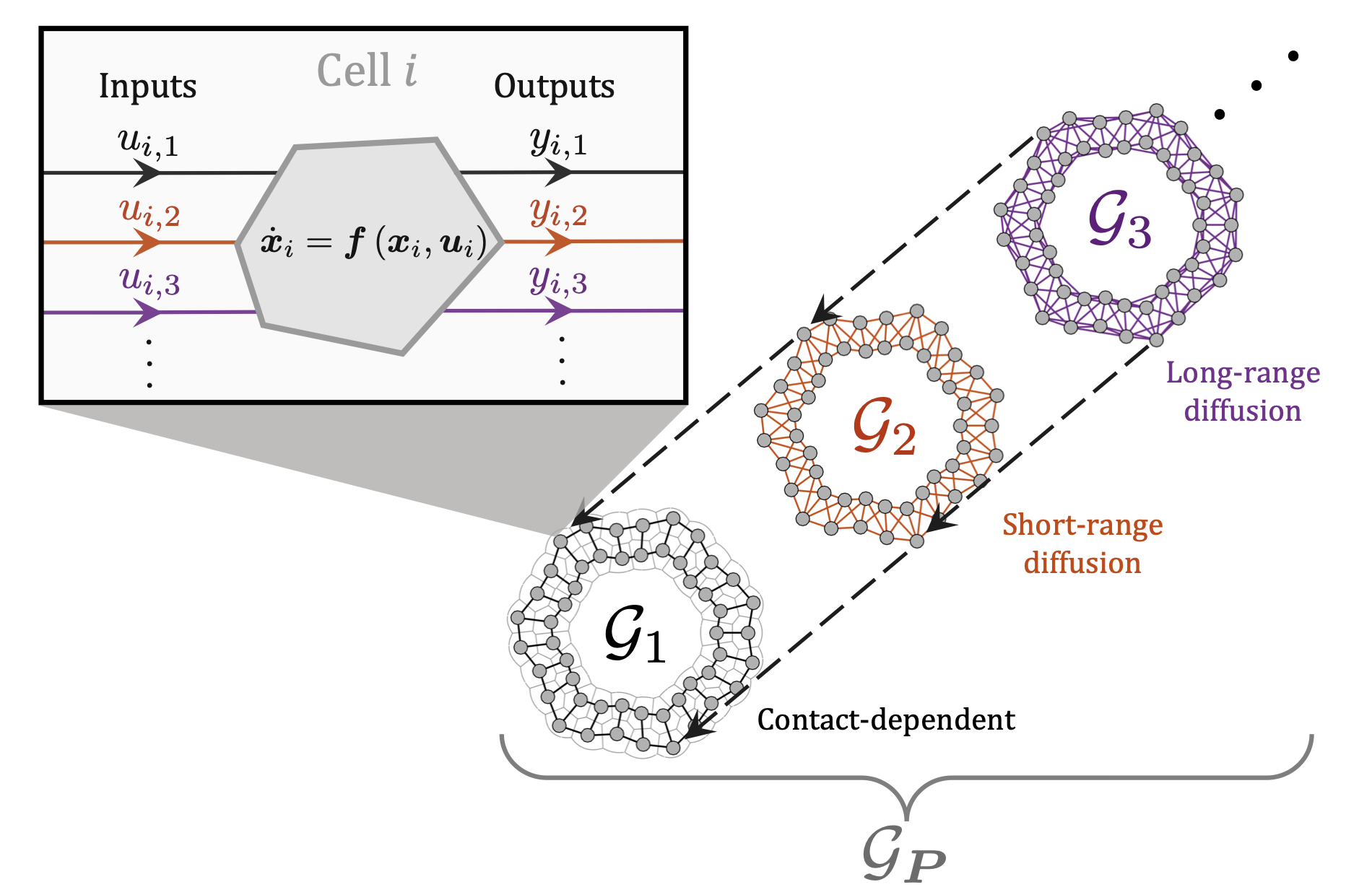}
\caption{A graphical representation of the IO system (\ref{eqn:IO_system}) for bilayer geometries with multiple signalling mechanisms defined by the global adjacency matrix $\bm{P}$ (\ref{eqn:interwoven_adj_mat}).  Example 2D bilayer graphs are shown for contact-dependent short and long-range diffusion over the same vertex set representing the cells in the tissue,  which are explicitly shown with membranes in $\cG_{1}$ to highlight the bilayer cellular structure.   Each of the connectivity graphs is then embedded within the same vertex set as indicated by the dashed arrows.  Therefore,  each vertex contains the intracellular kinetics defined by the IO system (\ref{eqn:IO_system}) which responds to the signal outputs of adjacent cells for each signalling mechanism which are transformed to signal inputs by $\bm{P}$ as defined in equation (\ref{eqn:input_to_output}).  } \label{fig:Mimo_overview}
\end{figure}

\revised{
Algebraically,  the cell-cell interaction graphs are represented using the weighted adjacency matrix,  $ \bm{W}_{j} \in \R^{N \times N}_{\geq 0 }$.  Let $\bm{\cW} \coloneqq \left\lbrace \bm{W}_{j}  \right\rbrace$ be the set of weighted and row-stochastic adjacency matrices,  namely,  for any  $j \in \{1,...,r\}$ and any row $i \in \{ 1,...,n\}$ then row-sum $\sum_{k} \lp \bm{W}_{j} \rp _{ik}  = 1$ which represents the weighted average of signal transfer between connected cells.  In addition,  we assume that the connectivity graph $\cG_{j}$ associated with $\bm{W}_{j}$ is undirected and connected,  and thus $\bm{W}_{j}$ is symmetric and irreducible,  i.e.,  there exists no permutation matrix that transform $\bm{W}_{j}$ to upper triangular form \cite{godsil2001algebraic}.  
}

\revised{
To preserve the order of signal outputs, $y_{i,j}$,  and therefore the cellular structure within the IO system (\ref{eqn:IO_system}),  we define the global interconnection matrix, $\bm{P}$, that is constructed by interweaving the each $\bm{W}_{j} \in \bm{\cW}$ in order of output signal defined by $y_{i,j}$,  namely,
\begin{equation}\label{eqn:interwoven_adj_mat}
\bm{P} = \sum_{j=1}^{r} \bm{W}_{j} \otimes \bm{D}_{j},
\end{equation}
where $\otimes$ is the Kronecker product and $\bm{D}_{j} = \text{diag}\lp \delta_{j,1}, ..., \delta_{j,r} \rp$ for $\delta_{i,j}$ the Kronecker delta function 
\begin{equation}\label{eqn:Kron_delta}
\delta_{i,j} =  \begin{cases} 
      1 & i = j, \\
      0 & i \neq j.
   \end{cases}
\end{equation}}
\revised{
The global interconnection matrix $\bm{P} \in \R^{rN \times rN}$ therefore produces a multilayer graph $\cG_{\bm{P}}$ that is layer-wise independently as shown in Figure \ref{fig:Mimo_overview}.  Critically,  the construction of $\bm{P}$ defines the linear relationship between global signal outputs and inputs 
 \begin{equation}\label{eqn:input_to_output}
 \bm{u} = \bm{P}\bm{y}
 \end{equation}
where cell-wise input-output structure is preserved.} The fundamental cellular identity preserving structure of $\bm{P}$ is demonstrated in the following example. 
\begin{example}\label{example:interweave_mats}
Consider the two general matrices 
\begin{equation}
\bm{W}_{1} = \begin{bmatrix}
a_{11} & a_{12} \\
a_{21} & a_{22}
\end{bmatrix} \quad \text{and} \quad \bm{W}_{2} = \begin{bmatrix}
b_{11} & b_{12} \\
b_{21} & b_{22}
\end{bmatrix} 
\end{equation}
for the IO system (\ref{eqn:IO_system}) with only two cells each with two signal inputs and outputs i.e.,  $r = 2$.  Then the global interconnection matrix,  $\bm{P}$,  has the form
\begin{equation}
\bm{P} = \begin{bmatrix}
a_{11} & a_{12} \\
a_{21} & a_{22}
\end{bmatrix} \otimes  \begin{bmatrix}
1& 0 \\
0 & 0
\end{bmatrix}  + \begin{bmatrix}
b_{11} & b_{12} \\
b_{21} & b_{22}
\end{bmatrix} \otimes  \begin{bmatrix}
0& 0 \\
0 & 1
\end{bmatrix}  =  \begin{bmatrix} 
a_{11} & 0 & a_{12}  & 0 \\
0 & b_{11} & 0 & b_{12}\\
a_{21} & 0 & a_{22}  & 0 \\
0 & b_{21} & 0 & b_{22}\\
\end{bmatrix}.
\end{equation}

\end{example}

%define the types Wj we consider in this study - the laminar bilayers%
In order to study the role of layer-dependent signalling polarity for the generation of laminar pattern in bilayer geometries,  we consider each graph to have two layers $\cL_{1} \coloneqq \{v_{1},...,v_{|\cL_{1}|} \} $ and $\cL_{2} \coloneqq \{v_{|\cL_{1}| + 1},...,v_{N} \} $ where $|\cL_{1}| = 1,..., N-1$,  as shown in figures \ref{fig:Mimo_overview} and \ref{fig:Contact_ex}. 

\revised{This layer-wise grouping of the vertices also provides consistent structure to the weighted adjacency matrices $\bm{W}_{k} \in \bm{\cW}$.  As a first-approach to the layer-dependent signal polarity,  we consider only two values of edge weights for connected cells in the same and different layers as highlighted in Figure \ref{fig:Contact_ex}} .  \revised{Namely,  consider the graph $\cG_{k}$ associated with $\bm{W}_{k}$,  then if $ v_{i},v_{j} \in \cL_{1} \, ( \text{or }\cL_{2})$ such that $v_{i} $ and $v_{j}$ are connected by and edge in $\cG_{k}$ and are in the same layer,  then $(\bm{W}_{k})_{ij} = \hat{w}_{1}^{[k]}$,  for $\hat{w}_{1}^{[k]}$ the row-normalised intralayer edge weight.  Similarly,  if $v_{i}$ and $v_{j}$ are in different layers,  $v_{i} \in \cL_{1}$ and $v_{j} \in \cL_{2}$, and are connected in $\cG_{k}$ then,  $(\bm{W}_{k})_{ij} = \hat{w}_{2}^{[k]}$,  for $\hat{w}_{2}^{[k]}$ the row-normalised interlayer edge weight.}  Consequently,  when vertices are indexed consecutively from $\cL_{1}$ then $\cL_{2}$,  each $\bm{W}_{k}$ has block form
\begin{equation}\label{eqn:w_block_form}
\bm{W}_{k} = \begin{bmatrix}
\widehat{\bm{W}}_{1,\cL_{1}}^{[k]} & \widehat{\bm{W}}_{2,\cL_{1}}^{[k]}  \\
\lp \widehat{ \bm{W}}_{2,\cL_{1}}^{[k]}\rp^{T} & \widehat{\bm{W}}_{1,\cL_{2}}^{[k]} 
\end{bmatrix} ,
\end{equation}
where $\widehat{\bm{W}}_{1,\cL_{1}} \in \R_{\geq 0}^{|\cL_{1}| \times |\cL_{1}| }$ contains all intralayer connections scaled by $\hat{w}_{1}^{[k]}$ for all the vertices in $\cL_{1}$,  $\widehat{\bm{W}}_{2,\cL_{1}} \in \R_{\geq 0}^{|\cL_{1}| \times |\cL_{2}| }$ contains all interlayer connections scaled by $\hat{w}_{2}^{[k]}$ for all the vertices in $\cL_{1}$.  Similarly  $\widehat{\bm{W}}_{1,\cL_{2}} \in \R_{\geq 0}^{|\cL_{2}| \times |\cL_{2}| }$ accounts for the intralayer connections within $\cL_{2}$.  As each $\cG_{k}$ is undirected,  the interlayer connections for all vertices in $\cL_{2}$ are represented by $\widehat{\bm{W}}_{2,\cL_{1}}^{T}$,  that is,  $\bm{W}_{k}$ is symmetric.\par
\begin{example}
The weighted adjacency matrix $\bm{W}_{1}$ associated with $\cG_{1}$ in Figure \ref{fig:Contact_ex} has the block matrices
\begin{equation}
\widehat{\bm{W}}_{1,\cL_{1}} = \begin{bmatrix}
0 & \hat{w}_{1}^{[1]} & 0 & \cdots & 0 & \hat{w}_{1}^{[1]} \\
\hat{w}_{1}^{[1]} & 0 & \hat{w}_{1}^{[1]} & & &\\
& & \ddots & & & \\
& & & \ddots & & \\
& &  &\hat{w}_{1}^{[1]} &0 &\hat{w}_{1}^{[1]} \\
\hat{w}_{1}^{[1]}&0 & \cdots  &0 &\hat{w}_{1}^{[1]} &0 
\end{bmatrix} \quad \text{and} \quad 
\widehat{\bm{W}}_{2,\cL_{1}} = \begin{bmatrix}
\hat{w}_{2}^{[1]}& 0 & 0 & \cdots & 0 & 0 \\
0& \hat{w}_{2}^{[1]} & 0& & &\\
& & \ddots & & & \\
& & & \ddots & & \\
& &  &0 &\hat{w}_{2}^{[1]}&0 \\
0&0 & \cdots  &0 &0 &\hat{w}_{2}^{[1]}
\end{bmatrix}, 
\end{equation}
for $\hat{w}_{1}^{[1]} = w_{1}^{[1]} / |w^{[1]}|$  and $\hat{w}_{2}^{[1]} = w_{2}^{[1]} / |w^{[1]}|$ where  $|w^{[1]}| = 2w_{1}^{[1]} +  w_{2}^{[1]}$  is the normalising factor for all rows ensuring the row-stochastic property of $\bm{W}_{1}$.  From the regularity of $\cG_{1}$ in Figure \ref{fig:Contact_ex},  we have that $\widehat{\bm{W}}_{1,\cL_{1}} = \widehat{\bm{W}}_{1,\cL_{2}}$ as the connections within layers are identical for $\cL_{1}$ and $\cL_{2}$.
\end{example}

\begin{figure}[h!]
\includegraphics[width=0.8\textwidth]{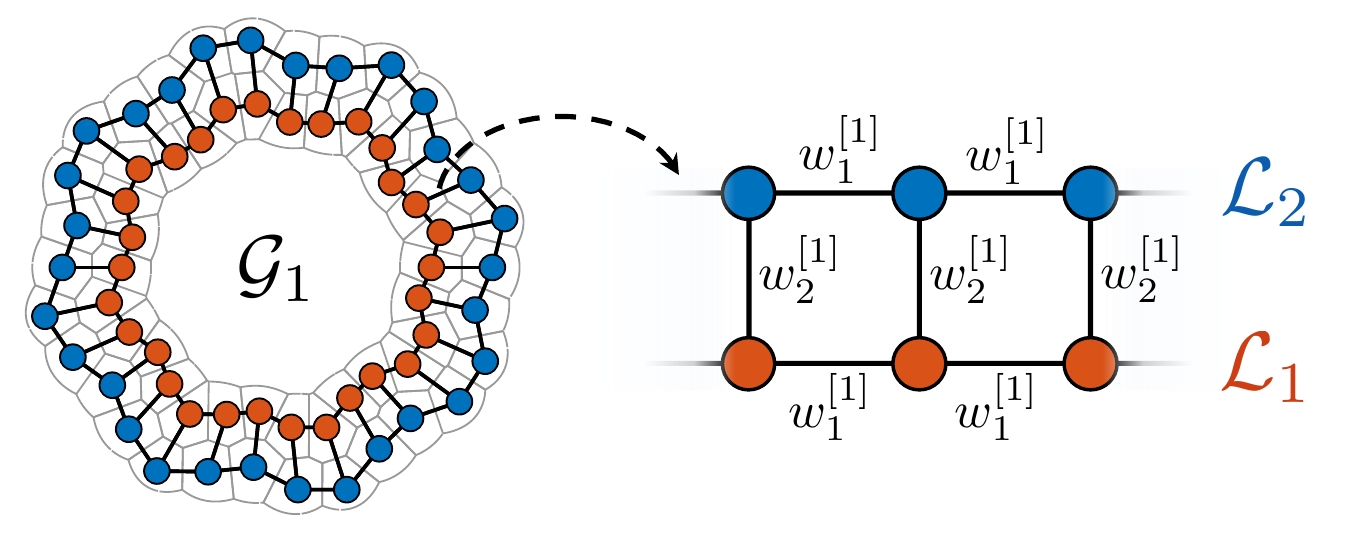}
\caption{Layer-dependent edge weight structure in bilayer graphs.  A 2D bilayer graph,  $\cG_{1}$, with contact-dependent edge connections highlight the layer vertex partitions with vertices in $\cL_{1}$ and  $\cL_{2}$ coloured orange and blue,  respectively.  The edge weight structure within and between the layers is shown with edges between vertices in the same layer weighted by $w_{1}^{[1]}$ and connected vertices in different layers weighted by $w_{2}^{[1]}$.  } \label{fig:Contact_ex}
\end{figure}

To summarise the internal cellular dynamics in terms of signal inputs and outputs,  as proposed in \cite{Arcak2013},  we introduce the transfer function $\bm{T}: U \rightarrow Y$ that describes cellular signal output response with respect to changes to input signals determined by connected cells.  It is assumed that $\bm{T}\lp \cdot \rp$ is bounded and $\mathcal{C}^{2}$ which conforms with the biological context of the IO system (\ref{eqn:IO_system}),  namely,  intracellular expression must remain finite with continuous dependence on the cellular microenvironment.  The introduction of $\bm{T}\lp \cdot \rp$ allows of the analysis of the IO system (\ref{eqn:IO_system}) from an alternative macroscopic perspective,  such that $\bm{T}\lp \cdot \rp$ retains the underlying features of the intracellular kinetics defined by $\bm{f}\lp \cdot \rp$ and $\bm{h}\lp \cdot \rp$ while not explicitly defining the intracellular interactions.  For instance,  intercellular communication of lateral-inhibition and lateral-induction pathways have a decreasing and increasing transfer function $\bm{T}\lp \cdot \rp$,  respectively \cite{wolpert2015principles, Arcak2013}.  Explicitly,  the transfer function allows for the definition of the auxiliary input to output transition relation
\begin{equation}\label{eqn:aux_input_output}
\bm{u} = \bm{P} [ \bm{T} \lp   \bm{u}_{1} \rp , ...,  \bm{T} \lp \bm{u}_{N} \rp ]^{T},
\end{equation}
which reduces the analytic complexity of the macroscopic analysis of spatially driven pattern formation in large-scale systems as the dependence of cellular coupling is more accessible in this form \cite{Arcak2013,RufinoFerreira2013,gyorgy2017patternMulti}.  However,  these methods require the characteristic behaviour of the GRNs and therefore $\bm{T} \lp \cdot \rp$ is known with respect to spatial stimuli.  \par

In the following section,  we show that the zeros of the auxiliary input to output transition equation (\ref{eqn:aux_input_output}) are the steady states of the IO system (\ref{eqn:IO_system}),  thus enabling stability analysis of the homogeneous steady states macroscopically.  Subsequently,  we derive conditions for the existence and uniqueness of the HSS in the large-scale system.  Thus, to induce polarity driven pattern formation within the IO system (\ref{eqn:IO_system}),  we seek sufficient conditions for the instability of HSS dependent on the bilayer connectivity graphs $\cG_{k}$,  and in particular,  the polarity weights,  $w_{1}^{[k]}$ and $w_{2}^{[k]}$.

%final statement of the aims of the next section%
\subsection{Existence,  uniqueness and stability of the homogeneous steady state in the large-scale IO systems}\label{sec:large_Scale_instability}$\,$\\
The majority of statements presented in this section were initially stated in \cite{gyorgy2017patternMulti} for MIMO IO systems.  Here,  we have independently proven them and partially extended them to comment on the uniqueness of the HSS.  We include all results for completeness with a focus on the application to mixed signal mechanisms in bilayer geometries.
Consider the $\mathcal{C}^{2}$ function $\bm{S}: U \rightarrow X$ that describes the changes to the intracellular kinetics $\bm{x}_{i}$ by the input signals $\bm{u}_{i}$ emanating from connected cells.  Therefore,  the following statement demonstrates that the zeros of the the auxiliary transfer relation (\ref{eqn:aux_input_output}) are the steady states of the IO system (\ref{eqn:IO_system}).
\begin{lemma}[\cite{gyorgy2017pattern}]\label{lemma:aux_ss}
Assume that for any $\bm{u}_{0} \in \R^{r}$ the function $\bm{f} \lp \bm{x} , \bm{u}_{0}  \rp = 0$ has a solution denoted by $\bm{x}_{0}  = \bm{S} \lp \bm{u}_{0}  \rp$ and therefore $\bm{T} \lp \bm{u}_{0} \rp = \bm{h} \lp \bm{S} \lp \bm{u}_{0} \rp \rp$.  Then if $\bm{u}_{0} $ satisfies
\begin{equation}\label{eqn:lemma_aux_eqn}
\begin{bmatrix}
\bm{u}_{0}\\
\vdots \\
\bm{u}_{0}
\end{bmatrix} =
\bm{P} \begin{bmatrix}
\bm{T} \lp \bm{u}_{0} \rp \\
\vdots\\
\bm{T} \lp \bm{u}_{0} \rp
\end{bmatrix}
\end{equation}
then $\bm{x}_{0} = \bm{S} \lp \bm{u}_{0} \rp$ is a steady state of the IO system (\ref{eqn:IO_system}).  Conversely,  if $\bm{S} \lp \cdot \rp$ is injective and $\bm{x}_{0}$ is a fixed point of the IO system (\ref{eqn:IO_system}),  then the corresponding $\bm{u}_{0}$ satisfies the auxiliary system (\ref{eqn:lemma_aux_eqn}).
\end{lemma}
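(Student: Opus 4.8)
The plan is to prove the two implications separately, in each case reducing the closed-loop equilibrium condition to the single-cell solution map $\bm{S}$ and its output characteristic $\bm{T}$. Throughout I would treat a ``steady state of the IO system'' as the homogeneous configuration in which every cell carries the common state $\bm{x}_0$, and I would note at the outset that $\bm{S}$ is taken to be single-valued on the relevant domain, so that $\bm{f}\lp\bm{x},\bm{u}_0\rp = 0$ is solved precisely by $\bm{x} = \bm{S}\lp\bm{u}_0\rp$ and $\bm{T}\lp\bm{u}_0\rp = \bm{h}\lp\bm{S}\lp\bm{u}_0\rp\rp$ is well defined.

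For the forward implication, suppose $\bm{u}_0$ solves the auxiliary system (\ref{eqn:lemma_aux_eqn}). I would propose the homogeneous configuration $\bm{x}_i = \bm{x}_0 \coloneqq \bm{S}\lp \bm{u}_0 \rp$ and $\bm{u}_i = \bm{u}_0$ for every $i \in \{1,\dots,N\}$, and then check that it is genuinely an equilibrium. The internal dynamics vanish immediately, since $\bm{f}\lp \bm{x}_0, \bm{u}_0 \rp = \bm{f}\lp \bm{S}\lp\bm{u}_0\rp, \bm{u}_0\rp = 0$ by the defining property of $\bm{S}$. It then remains to verify consistency with the interconnection law $\bm{u} = \bm{P}\bm{y}$ of (\ref{eqn:input_to_output}). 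Because the state is homogeneous, each output equals $\bm{y}_i = \bm{h}\lp \bm{x}_0 \rp = \bm{h}\lp \bm{S}\lp\bm{u}_0\rp\rp = \bm{T}\lp\bm{u}_0\rp$, so the stacked output vector is exactly the argument appearing on the right of (\ref{eqn:lemma_aux_eqn}); applying $\bm{P}$ and invoking the auxiliary equation returns the stacked input vector $[\bm{u}_0,\dots,\bm{u}_0]^T$, which matches the inputs we assigned. Hence all dynamical and algebraic constraints hold simultaneously and $\bm{x}_0$, replicated across cells, is a steady state. Notice that this direction needs only (\ref{eqn:lemma_aux_eqn}) itself and does not require the row-stochastic structure of $\bm{P}$.

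For the converse, suppose $\bm{S}$ is injective and the homogeneous state $\bm{x}_0$ is a fixed point. The key observation I would exploit is that a homogeneous state forces homogeneous outputs, $\bm{y}_i = \bm{h}\lp\bm{x}_0\rp$ for all $i$, and hence a well-defined input vector $\bm{u} = \bm{P}\bm{y}$ through (\ref{eqn:input_to_output}). At the fixed point each cell satisfies $\bm{f}\lp\bm{x}_0,\bm{u}_i\rp = 0$, so each input $\bm{u}_i$ is a preimage of the common state under the solution map, $\bm{x}_0 = \bm{S}\lp\bm{u}_i\rp$. Here injectivity of $\bm{S}$ is essential: it forces all the $\bm{u}_i$ to coincide with a single value $\bm{u}_0$, so the inputs are themselves homogeneous and uniquely determined by $\bm{x}_0$. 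With the input homogeneous, substituting $\bm{y}_i = \bm{h}\lp\bm{x}_0\rp = \bm{h}\lp\bm{S}\lp\bm{u}_0\rp\rp = \bm{T}\lp\bm{u}_0\rp$ into $\bm{u} = \bm{P}\bm{y}$ yields precisely the auxiliary system (\ref{eqn:lemma_aux_eqn}).

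I expect the main obstacle to be this converse step, specifically the passage from identical cell states to identical inputs. This is exactly where injectivity of $\bm{S}$ is needed rather than merely its existence as a solution selector: without it, distinct input profiles across cells could in principle realise the same homogeneous steady state, and ``the corresponding $\bm{u}_0$'' would not be well defined. I would therefore flag the single-valuedness and injectivity of $\bm{S}$ as the load-bearing hypotheses; once they are in place, both implications reduce to the short substitution arguments above.
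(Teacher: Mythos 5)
The paper states this lemma without proof, citing \cite{gyorgy2017pattern}, so there is no in-text argument to compare against; your two-way substitution proof is correct and is the standard one (homogeneous state $\Rightarrow$ homogeneous outputs $\bm{T}\lp\bm{u}_{0}\rp$, then the interconnection law $\bm{u}=\bm{P}\bm{y}$ closes the loop in both directions). The only point worth keeping explicit is the one you already flag: in the converse, passing from $\bm{f}\lp\bm{x}_{0},\bm{u}_{i}\rp=0$ to $\bm{x}_{0}=\bm{S}\lp\bm{u}_{i}\rp$ uses that $\bm{S}$ selects the \emph{unique} solution for each input, which is slightly stronger than the bare existence hypothesis as written but is clearly the intended reading, after which injectivity of $\bm{S}$ collapses all the $\bm{u}_{i}$ to a common $\bm{u}_{0}$ exactly as you argue.
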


Following from Lemma \ref{lemma:aux_ss},  we now study the transfer dynamics defined by $\bm{T}\lp \cdot \rp$ for the existence of the steady states of the IO system (\ref{eqn:IO_system}).  Critically,  as $\bm{T}\lp \cdot \rp$ represents changes in intercellular signalling,  $\bm{T}\lp \cdot \rp$ is bounded.  Therefore,  the following statement ensures the existence and uniqueness of a homogeneous steady states of the  IO system (\ref{eqn:IO_system}) using the boundness of transfer dynamics. 

\begin{lemma}\label{lemma:exist_and_uni_HSS}
Provided that the conditions of Lemma \ref{lemma:aux_ss} hold,  then for any $\bm{u}_{0} \in \R^{r}$ where $\bm{u}^{*} = \bm{1}_{N} \otimes \bm{u}_{0}$ there exists $\bm{x}_{0} \in \R^{n}$ such that $\bm{x}^{*} = \bm{1}_{N} \otimes \bm{x}_{0}$ is a steady state if the IO system (\ref{eqn:IO_system}).  Moreover,  if $ \partial \bm{f}  \lp \bm{x}, \bm{u}_{0} \rp  / \partial \bm{x} $ is invertible for all $\bm{x} \in X$ then $\bm{x}^{*}$ is unique.
\end{lemma}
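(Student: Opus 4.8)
The plan is to establish existence by a direct substitution argument that deliberately avoids any topological fixed-point machinery, and then to obtain uniqueness by reducing it, via a mean-value identity, to the stated Jacobian hypothesis. First I would fix an arbitrary $\bm{u}_0 \in \R^r$ and feed every cell the replicated input $\bm{u}^* = \bm{1}_N \otimes \bm{u}_0$, so that $\bm{u}_i = \bm{u}_0$ for each $i$. The hypothesis inherited from Lemma~\ref{lemma:aux_ss} is exactly that $\bm{f}\lp \cdot , \bm{u}_0 \rp = 0$ is solvable, with solution $\bm{x}_0 = \bm{S}\lp \bm{u}_0 \rp$. Setting $\bm{x}^* = \bm{1}_N \otimes \bm{x}_0$ and evaluating the right-hand side of (\ref{eqn:IO_system}) block by block, the $i$-th block is $\bm{f}\lp \bm{x}_0, \bm{u}_0 \rp = 0$; hence $\dot{\bm{x}}^* = 0$ and $\bm{x}^*$ is a steady state. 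No compactness or Brouwer input is required: this is simply that a single state $\bm{x}_0$ replicated across identical cells under a replicated input annihilates every cell equation at once. I would then note that this homogeneous ansatz is consistent with the interconnection (\ref{eqn:input_to_output}): since each $\bm{W}_j$ is row-stochastic, $\bm{W}_j \bm{1}_N = \bm{1}_N$, and since $\sum_{j=1}^r \bm{D}_j = \bm{I}_r$, the mixed-product rule gives $\bm{P}\lp \bm{1}_N \otimes \bm{z} \rp = \sum_{j=1}^r \lp \bm{W}_j \bm{1}_N \rp \otimes \lp \bm{D}_j \bm{z} \rp = \bm{1}_N \otimes \bm{z}$ for every $\bm{z} \in \R^r$. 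Thus a homogeneous output profile maps to a homogeneous input profile, confirming that $\bm{x}^*$ sits consistently inside the interconnected system, and coincides with the closed-loop HSS of Lemma~\ref{lemma:aux_ss} precisely when $\bm{u}_0 = \bm{T}\lp \bm{u}_0 \rp$.

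For uniqueness, I would suppose two homogeneous steady states $\bm{1}_N \otimes \bm{x}_0$ and $\bm{1}_N \otimes \tilde{\bm{x}}_0$ for the same $\bm{u}_0$, so that both $\bm{x}_0$ and $\tilde{\bm{x}}_0$ solve $\bm{f}\lp \cdot , \bm{u}_0 \rp = 0$. Using convexity of $X$ to keep the segment $\bm{x}\lp s \rp = \bm{x}_0 + s\lp \tilde{\bm{x}}_0 - \bm{x}_0 \rp$ inside the domain, the fundamental theorem of calculus gives
\[
0 = \bm{f}\lp \tilde{\bm{x}}_0, \bm{u}_0 \rp - \bm{f}\lp \bm{x}_0, \bm{u}_0 \rp = \lp \int_0^1 \frac{\partial \bm{f}}{\partial \bm{x}}\lp \bm{x}\lp s \rp, \bm{u}_0 \rp \dd{s} \rp \lp \tilde{\bm{x}}_0 - \bm{x}_0 \rp .
\]
If the averaged Jacobian is invertible then $\tilde{\bm{x}}_0 = \bm{x}_0$, which yields uniqueness of $\bm{x}^*$; the same argument in fact forces every steady state under the applied input $\bm{u}^*$ to be homogeneous, since each cell independently solves $\bm{f}\lp \cdot , \bm{u}_0 \rp = 0$.

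The hard part is the last step: pointwise invertibility of $\partial \bm{f} / \partial \bm{x}$ does not by itself make the integral invertible, as averaging invertible matrices can produce a singular one. I expect to close this gap using the sign structure of the competitive kinetics: if, for every $\bm{x}$, the matrix $-\partial \bm{f} / \partial \bm{x}\lp \bm{x}, \bm{u}_0 \rp$ is a Z-matrix admitting a common vector $\bm{v} > 0$ with $-\lp \partial \bm{f} / \partial \bm{x} \rp \bm{v} > 0$, then $\bm{v}$ certifies the averaged matrix as a nonsingular M-matrix, and invertibility follows. Absent such a structural certificate, the fallback is the implicit function theorem, which promotes pointwise invertibility to well-definedness and local uniqueness of $\bm{S}\lp \cdot \rp$, i.e.\ the weaker, strictly local reading of the claim. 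Pinning down which of these the ambient competitive-kinetics assumptions genuinely deliver is the only delicate point, the existence half being immediate.
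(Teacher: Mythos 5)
The existence half of your proposal contains a genuine gap, and it is precisely the step you declare immediate. The IO system (\ref{eqn:IO_system}) is a closed loop: the inputs are not exogenous parameters but are determined by the outputs through $\bm{u} = \bm{P}\bm{y}$, so you cannot ``feed every cell the replicated input'' $\bm{u}_{0}$ of your choosing. Your substitution argument shows only that the replicated state $\bm{x}^{*} = \bm{1}_{N} \otimes \bm{S}\lp \bm{u}_{0} \rp$ annihilates every cell equation when the input is held fixed at $\bm{u}_{0}$; in the interconnected system the induced input is $\bm{P} \lp \bm{1}_{N} \otimes \bm{T}\lp \bm{u}_{0} \rp \rp = \bm{1}_{N} \otimes \bm{T}\lp \bm{u}_{0} \rp$ (your row-stochasticity computation, which matches the paper's observation that $\bm{P}\bm{1}_{rN} = \bm{1}_{rN}$), so $\bm{x}^{*}$ is a steady state of the closed loop only when $\bm{u}_{0} = \bm{T}\lp \bm{u}_{0} \rp$. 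You note this consistency condition yourself but never verify that such a $\bm{u}_{0}$ exists --- and that verification is the entire content of the existence claim. The paper's proof supplies it via exactly the machinery you set out to avoid: boundedness of $\bm{T}$ gives a constant $m$ with $\bm{F} = \lambda \bm{T}$ mapping the convex compact ball $B_{m}$ into itself, and the Brouwer Fixed-Point Theorem yields $\bm{u}_{0} = \lambda \bm{T}\lp \bm{u}_{0} \rp$. (The lemma's ``for any $\bm{u}_{0}$'' phrasing is admittedly loose, but the paper's opening line --- ``it is sufficient to show that $\exists \bm{u}_{0}$'' --- makes clear the claim being proved is existence of a self-consistent homogeneous input, not a statement about arbitrary ones.) Without some fixed-point input --- Brouwer here, or Banach under the Lipschitz hypothesis of the paper's subsequent remark --- the self-consistent $\bm{u}_{0}$ need not exist, so your existence argument does not prove the lemma.

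On uniqueness you are on firmer ground, and in fact more careful than the paper itself. The paper applies the Mean Value Theorem componentwise, producing a matrix whose $j$-th row is the $j$-th row of $\partial \bm{f} / \partial \bm{x}$ evaluated at a row-dependent intermediate point, and then invokes invertibility of ``the'' Jacobian; your integral form $0 = \lp \int_{0}^{1} \frac{\partial \bm{f}}{\partial \bm{x}}\lp \bm{x}\lp s \rp, \bm{u}_{0} \rp \dd{s} \rp \lp \tilde{\bm{x}}_{0} - \bm{x}_{0} \rp$ is the cleaner rendering of the same reduction, and your objection --- that pointwise invertibility does not transfer to the averaged (or mixed-row) matrix --- is legitimate and applies equally to the paper's own proof as written. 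Your proposed repairs (an M-matrix certificate from the competitive sign structure, or a retreat to local uniqueness via the implicit function theorem) go beyond anything the paper does, which simply asserts the kernel argument from the Invertible Matrix Theorem. In summary: your uniqueness discussion is correct and sharper than the published argument, but the existence half is incomplete in exactly the place where the paper's proof does its real work.
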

\begin{proof}
It is sufficient to show that $\exists \bm{u}_{0} \in \R^{r}$ such that $\bm{u}^{*} = \bm{1}_{N} \otimes \bm{u}_{0}$ satisfies the auxiliary system (\ref{eqn:lemma_aux_eqn}) as $\bm{x}_{0} = \bm{S} \lp  \bm{u}_{0} \rp$.  As each $\bm{W}_{j} \in \bm{\cW}$ is row-stochastic,  then the global interconnection matrix $\bm{P}$ is also row-stochastic by construction.  Consequently,  there exists an eigenvalue $\lambda$ of $\bm{P}$ such that $\bm{P}\bm{1}_{rN} = \lambda \bm{1}_{rN}$ \cite{johnson1981row},  and therefore the proof follows from verifying the existence of $\bm{u}_{0}$ that satisfies $\bm{u}_{0} = \lambda \bm{T} \lp \bm{u}_{0} \rp$. \par

By the bounded property of $\bm{T}:U \rightarrow Y$,  there exists some constant $m>0$ where $|| \lambda \bm{T} \lp \cdot \rp ||_{2} \leq m$.  Consider the function $\bm{F}: B_{m} \rightarrow B_{m}$ where $\bm{F} \lp \cdot \rp = \lambda \bm{T} \lp \cdot \rp$ and $B_{m} = \{ \bm{v} \in \R^{r} : || v ||_{2}  \leq m  \}$,  noting that $B_{m}$ is a convex set and the continuity of $\bm{F}\lp \cdot \rp$ is induced by the continuity of $\bm{T} \lp \cdot \rp$. Therefore,  by the Brouwer Fixed-Point Theorem \cite{leoni2017first},  there exists some $\bm{u}_{0} \in B_{m}$ such that $\bm{u}_{0} = \bm{F}\lp \bm{u}_{0} \rp = \lambda \bm{T} \lp \bm{u}_{0} \rp$.  \par

The uniqueness of the HSS is guaranteed by the following.  Assume that for any $\bm{u}_{0} \in \R^{r}$ there exists $\overline{\bm{x}}_{1} ,\overline{\bm{x}}_{2} \in \R^{n}$ where both are solutions to $\bm{f}\lp \bm{x},  \bm{u}_{0} \rp = \bm{0}$.  Specifically,  $f_{j} \lp \overline{\bm{x}}_{1},  \bm{u}_{0} \rp = f_{j} \lp \overline{\bm{x}}_{2},  \bm{u}_{0} \rp  $ for all $j \in [1,n]$.  Therefore by the Mean Value Theorem \cite{kumar2014advanced},  we construct the linear system
\begin{equation}
\begin{bmatrix}
0\\
\vdots \\
0
\end{bmatrix} 
= 
\begin{bmatrix}
\frac{\partial f_{1}} { \partial x_{1}}(\bm{x},\bm{u}_{0}) & \cdots & \frac{\partial f_{1}} { \partial x_{n}} (\bm{x},\bm{u}_{0})  \\
& \vdots & \\
\frac{\partial f_{n}} { \partial x_{1}} (\bm{x},\bm{u}_{0})  & \cdots & \frac{\partial f_{n}} { \partial x_{n}} (\bm{x},\bm{u}_{0}) 
\end{bmatrix}
\begin{bmatrix}
\overline{\bm{x}}_{11} - \overline{\bm{x}}_{21}  \\
\vdots\\
\overline{\bm{x}}_{1n} - \overline{\bm{x}}_{2n}
\end{bmatrix}
\end{equation} 
and from the Invertible Matrix Theorem the kernel of $\partial \bm{f} / \partial \bm{x}$ contains only the null vector \cite{liesen2011lineare},  \textit{i.e.} $\overline{\bm{x}}_{1} = \overline{\bm{x}}_{2}$.
\end{proof}

\begin{remark}
If the transfer function $\bm{T}: U \rightarrow Y$ is Lipschitz continuous with Lipschitz constant $k\in \left( 0 , 1 \right]$,  namely,
\begin{equation}
|| \bm{T} \lp \bm{u}_{i} \rp - \bm{T} \lp \bm{u}_{j} \rp ||_{2} \leq k || \bm{u}_{i} - \bm{u}_{j} ||_{2} 
\end{equation}
for all $\bm{u}_{i},\bm{u}_{j} \in U$.  Then the HSS defined in Lemma \ref{lemma:exist_and_uni_HSS} is unique by the Banach Fixed-Point Theorem \cite{agarwal2018banach},  independent of the invertibility of $\bm{f} \lp \bm{x}, \bm{u} \rp$.
\end{remark}

As we seek spatially driven instabilities of the HSS,  we assume the asymptotic stability of $\bm{x}^{*}$ in the absence of cellular connections.  We say a fixed-point of a system is stable if the associated Jacobian has all eigenvalues with negative real-part.  Therefore,  we are assuming that $\bm{A} \coloneqq \partial \bm{f} / \partial \bm{x}_{i}$ evaluated at $\bm{x}_{0}$ is stable i.e. ,  the intracellular kinetics are not self-exciting in the absence of interconnections.\par

A necessary feature for polarity-driven pattern formation in spatially discrete interconnected systems is the connectivity-induced instability of the HSS,  $\bm{x}^{*}$,  associated with the IO system (\ref{eqn:IO_system}),  which can be approached by linearisation.  The following results provided a convenient method of analysing the linear stability of homogeneous large-scale IO systems by assuming each cellular connectivity graph $\cG_{j}$ commutes,  thus enabling the parallel computation of eigenvalues for each adjacency matrix $\bm{W}_{j} \in \bm{\cW}$,  reducing the dimensionality of the linearisation.  
%We emphasise that the commutativity,  undirected and connected properties of $\cG_{j}$ are not restrictive when describing signal mechanisms in the context of biological systems and is discussed further in Supplementary file 1.  

\begin{lemma}\label{lemma:large_linear}
Let $\bm{A} \coloneqq \partial \bm{f} / \partial \bm{x}_{i}$,  $\bm{B} \coloneqq \partial \bm{f} / \partial \bm{u}_{i}$ and $\bm{C} \coloneqq \partial \bm{h} / \partial \bm{x}_{i}$,  be each evaluated at the steady state $\bm{x}_{0}$ for fixed $\bm{u}_{0}$.  Let the steady state of the global IO system be $\bm{x}^{*} = \bm{1}_{N} \otimes \bm{x}_{0}$.  Assume that all $\bm{W}_{j} \in \bm{\cW}$ commute and denote $\bm{\Lambda}_{j} = \text{diag} \lp \lambda_{1,j},...,\lambda_{r,j}  \rp$ where $\lambda_{i,j}$ is the $j$th eigenvalue of $\bm{W}_{i}$ w.r.t.  the common eigenbasis of all matrices in $\bm{\cW}$.  Then $\bm{x}^{*}$ is asymptotically stable if $\bm{A}  + \bm{B} \bm{\Lambda}_{j} \bm{C}$ is stable for all $j$ and unstable otherwise.
\end{lemma}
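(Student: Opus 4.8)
The plan is to linearise the closed-loop system about the homogeneous steady state $\bm{x}^{*}$ and then apply a single Kronecker-structured orthogonal change of variables that simultaneously block-diagonalises the resulting Jacobian, reducing its spectrum to the union of the spectra of the $N$ reduced matrices $\bm{A}+\bm{B}\bm{\Lambda}_{j}\bm{C}$. The stability criterion stated earlier (all eigenvalues in the open left half-plane) then transfers block by block.

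First I would form the closed loop. Substituting $\bm{y}_{i}=\bm{h}(\bm{x}_{i})$ and the interconnection $\bm{u}=\bm{P}\bm{y}$ into the IO system (\ref{eqn:IO_system}) yields an autonomous system in $\bm{x}$ alone. Writing $\delta\bm{x}=\bm{x}-\bm{x}^{*}$ and using that the blocks $\bm{A},\bm{B},\bm{C}$ are identical in every cell because $\bm{x}^{*}=\bm{1}_{N}\otimes\bm{x}_{0}$ is homogeneous, the linearisation reads $\dot{\delta\bm{x}}=\bm{J}\,\delta\bm{x}$ with
\begin{equation}
\bm{J} = \bm{I}_{N}\otimes\bm{A} + \lp\bm{I}_{N}\otimes\bm{B}\rp\,\bm{P}\,\lp\bm{I}_{N}\otimes\bm{C}\rp .
\end{equation}
By the stated stability definition it then suffices to locate the eigenvalues of $\bm{J}$.

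The central step is a simultaneous diagonalisation. Since each $\bm{W}_{i}$ is symmetric (the graphs $\cG_{i}$ are undirected) and all members of $\bm{\cW}$ commute by hypothesis, they form a commuting family of normal matrices and hence admit a common orthonormal eigenbasis; let $\bm{Q}\in\R^{N\times N}$ be the orthogonal matrix whose columns are these eigenvectors, so that $\bm{Q}^{T}\bm{W}_{i}\bm{Q}=\text{diag}\lp\lambda_{i,1},\dots,\lambda_{i,N}\rp$ for each $i$. Conjugating $\bm{P}$ by $\bm{Q}\otimes\bm{I}_{r}$ and using the mixed-product property of the Kronecker product together with $\bm{D}_{i}=\bm{e}_{i}\bm{e}_{i}^{T}$, I would obtain
\begin{equation}
\lp\bm{Q}^{T}\otimes\bm{I}_{r}\rp\,\bm{P}\,\lp\bm{Q}\otimes\bm{I}_{r}\rp = \sum_{i=1}^{r}\text{diag}\lp\lambda_{i,1},\dots,\lambda_{i,N}\rp\otimes\bm{D}_{i} = \bigoplus_{j=1}^{N}\bm{\Lambda}_{j},
\end{equation}
the crucial observation being that collecting the $j$th diagonal entry across the $r$ summands rebuilds precisely $\bm{\Lambda}_{j}=\text{diag}\lp\lambda_{1,j},\dots,\lambda_{r,j}\rp$. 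Conjugating the full Jacobian by $\bm{Q}\otimes\bm{I}_{n}$ and threading these factors through the block matrices $\bm{I}_{N}\otimes\bm{B}$ and $\bm{I}_{N}\otimes\bm{C}$ with the same identities then gives
\begin{equation}
\lp\bm{Q}^{T}\otimes\bm{I}_{n}\rp\,\bm{J}\,\lp\bm{Q}\otimes\bm{I}_{n}\rp = \bigoplus_{j=1}^{N}\lp\bm{A}+\bm{B}\bm{\Lambda}_{j}\bm{C}\rp .
\end{equation}

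Finally, since $\bm{Q}\otimes\bm{I}_{n}$ is a similarity transformation, the spectrum of $\bm{J}$ is the union over $j$ of the spectra of the reduced matrices $\bm{A}+\bm{B}\bm{\Lambda}_{j}\bm{C}$; every eigenvalue of $\bm{J}$ has negative real part exactly when each block does, which by the stated stability definition is the assertion, with instability following as soon as one block carries an eigenvalue of nonnegative real part. I expect the main obstacle to be the careful Kronecker bookkeeping that keeps the state-space factor $\bm{I}_{n}$ distinct from the signal-space factor $\bm{I}_{r}$ — specifically, verifying that the conjugation diagonalising $\bm{P}$ (which acts on $\R^{rN}$) threads correctly through $\bm{B}$ and $\bm{C}$ so that the single transform $\bm{Q}\otimes\bm{I}_{n}$ simultaneously resolves both the interconnection $\bm{P}$ and the interwoven Kronecker-delta structure of the $\bm{D}_{i}$, collapsing the whole Jacobian onto the $N$ reduced blocks. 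The commutativity hypothesis is precisely what makes a single common eigenbasis $\bm{Q}$ available, and hence what makes this reduction possible.
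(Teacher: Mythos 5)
Your proposal is correct and follows essentially the same route as the paper's proof: linearise to obtain $\bm{J}=\bm{I}_{N}\otimes\bm{A}+(\bm{I}_{N}\otimes\bm{B})\bm{P}(\bm{I}_{N}\otimes\bm{C})$, use commutativity plus symmetry to obtain a common eigenbasis, and conjugate by that basis tensored with $\bm{I}_{n}$ to block-diagonalise $\bm{J}$ into the $N$ blocks $\bm{A}+\bm{B}\bm{\Lambda}_{j}\bm{C}$. The only cosmetic difference is that the paper first rewrites $\bm{J}$ as $\bm{I}_{N}\otimes\bm{A}+\sum_{i}\bm{W}_{i}\otimes\bm{B}\bm{D}_{i}\bm{C}$ via the mixed-product property before conjugating, whereas you conjugate $\bm{P}$ directly and thread the factors through; the Kronecker bookkeeping you flag as the main obstacle resolves exactly as you anticipate.
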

\begin{proof}
Linearisation of the global IO system (\ref{eqn:IO_system}) about the fixed point $\bm{x}^{*} =  \bm{I}_{N} \otimes \bm{x}_{0} $ yields the Jacobian 
\begin{align}
\bm{J} &=  \bm{I}_{N} \otimes \bm{A} + \lp \bm{I}_{N} \otimes \bm{B} \rp \bm{P} \lp \bm{I}_{N} \otimes \bm{C} \rp, \nonumber \\
 &= \bm{I}_{N} \otimes \bm{A} + \lp \bm{I}_{N} \otimes \bm{B} \rp \lp \sum_{i = 1}^{r} \bm{W}_{i} \otimes \bm{D}_{i}    \rp \lp \bm{I}_{N} \otimes \bm{C} \rp ,\nonumber \\
 &= \bm{I}_{N} \otimes \bm{A} + \sum_{i = 1}^{r} \bm{W}_{i} \otimes \bm{B} \bm{D}_{i} \bm{C}
\end{align}
by direct substitution of the definition of $\bm{P}$ in terms of the independent signalling mechanisms and the mixed products property of Kronecker products \cite{hardy2019matrix}.  As $\bm{W}_{i}\bm{W}_{j} = \bm{W}_{j} \bm{W}_{i}$ for all $\bm{W}_{i},\, \bm{W}_{j} \in \bm{\cW}$ and all matrices $\bm{W}_{i}$ are real and symmetric,  then there exists a matrix $\bm{R}$ that simultaneously diagonalises all adjacency matrices $\bm{W}_{i}\in \bm{\cW}$ \cite{liesen2011lineare}.  Moreover,  the eigenbasis defined by $\bm{R}$ fixes the order of the diagonal entries in each $ \bm{Z}_{i} = \bm{R}^{-1} \bm{W}_{i} \bm{R} = \text{diag} \lp \lambda_{i,1},...,\lambda_{i,N} \rp$ such that the sum of the diagonalised matrices $\bm{Z}_{i}$ are unique.  Specifically,  reordering the eigenvectors that form the eigenbasis $\bm{R}$ would only permute the sum of the diagonal values of $\bm{Z}_{i}$. \par

Consider the transformed Jacobian $\bm{H} = \lp \bm{R}^{-1} \otimes \bm{I}_{n}   \rp \bm{J}  \lp \bm{R} \otimes \bm{I}_{n}   \rp$ then by the mixed products property of Kronecker products
\begin{align}
\bm{H} &=  \lp \bm{R}^{-1} \otimes \bm{I}_{n}   \rp \lp \bm{I}_{N} \otimes \bm{A} \rp \lp \bm{R} \otimes \bm{I}_{n}   \rp  +   \lp \bm{R}^{-1} \otimes \bm{I}_{n}   \rp   \lp\sum_{i = 1}^{r} \bm{W}_{i} \otimes \bm{B} \bm{D}_{i} \bm{C}   \rp   \lp \bm{R} \otimes \bm{I}_{n}   \rp  ,\nonumber \\
&= \bm{R}^{-1} \bm{I}_{N} \bm{R} \otimes \bm{I}_{n} \bm{A} \bm{I}_{n} + \sum_{i = 1}^{r}  \bm{R}^{-1} \bm{W}_{i} \bm{R} \otimes \bm{I}_{n} \bm{B} \bm{D}_{i} \bm{C} \bm{I}_{n}, \nonumber   \\
&= \bm{I}_{N} \otimes \bm{A}  + \sum_{i = 1}^{r}  \bm{Z}_{i} \otimes  \bm{B} \bm{D}_{i} \bm{C}.
\end{align}
By the diagonal structure of $\bm{Z}_{i}$ the matrix $\bm{H}$ has the block diagonal form 
 \begin{equation}
 \bm{H}= \begin{bmatrix}
           \bm{A} +\sum_{i=1}^{r} \lambda_{i,1} \bm{B} \bm{D}_{i} \bm{C} &  &   \\
           & \ddots &  \\
           & &  \bm{A} + \sum_{i=1}^{r} \lambda_{i,n}  \bm{B} \bm{D}_{i} \bm{C}
         \end{bmatrix},
\end{equation}
and therefore,  as $\sum_{i=1}^{r} \lambda_{i,j} \bm{B} \bm{D}_{i} \bm{C}=  \bm{B} \bm{\Lambda}_{j} \bm{C}$ then the eigenvalues of $\bm{H}$ are those of $\bm{A} + \bm{B} \bm{\Lambda}_{j} \bm{C}$ for all $1 \leq j \leq N$.   Consequently,  if $\bm{A} + \bm{B} \bm{\Lambda}_{j} \bm{C}$ has eigenvalues with all negative real-part,  for all $1 \leq j \leq N$,  then $\bm{H}$ is stable and therefore the stability of $\bm{J}$ follows by the bijection between the linearised systems $\bm{H}$ and $\bm{J}$.
\end{proof}

Before discussing the behaviour of flows of the IO system (\ref{eqn:IO_system}) near the HSS,  we first introduce a convenient condition for the instability of a matrix.
\begin{lemma}[\cite{RufinoFerreira2013}]\label{lemma:suff_stable}
If $\bm{M} \in \R^{n \times n}$ is stable then $\lp -1 \rp^{n} \det \lp \bm{M}  \rp >0  $.  Conversely,  if $\lp -1 \rp^{n} \det \lp \bm{M}  \rp < 0$ then $\bm{M}$ has an eigenvalue with positive real-part.
\end{lemma}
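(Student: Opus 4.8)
The plan is to work directly with the spectrum of $\bm{M}$ and exploit the factorisation $\det\lp \bm{M} \rp = \prod_{i=1}^{n} \lambda_{i}$, where $\lambda_{1},\dots,\lambda_{n}$ are the eigenvalues of $\bm{M}$ counted with algebraic multiplicity. Since $\bm{M}$ is real, its non-real eigenvalues occur in complex-conjugate pairs, and each such pair $\lambda,\bar{\lambda}$ contributes a factor $\lambda\bar{\lambda} = |\lambda|^{2} > 0$ to the determinant. Thus the sign of $\det\lp \bm{M} \rp$ is governed entirely by the real eigenvalues. Writing $p$ for the number of real eigenvalues and $2q$ for the number of non-real ones, so that $n = p + 2q$, the key bookkeeping observation is that $(-1)^{n} = (-1)^{p}$.

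For the first implication, I would suppose $\bm{M}$ is stable, so every eigenvalue has strictly negative real part; in particular no eigenvalue vanishes, giving $\det\lp \bm{M} \rp \neq 0$, and each of the $p$ real eigenvalues is strictly negative. Hence the product of the real eigenvalues has sign $(-1)^{p}$ with positive magnitude, while the conjugate-pair factors are all positive, so $\det\lp \bm{M} \rp$ carries the sign $(-1)^{p}$. Multiplying by $(-1)^{n} = (-1)^{p}$ then gives $(-1)^{n}\det\lp \bm{M} \rp = |\det\lp \bm{M} \rp| > 0$, as required.

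For the converse I would argue by contraposition: assume $\bm{M}$ has no eigenvalue with positive real part, i.e. every eigenvalue has real part at most zero, and deduce $(-1)^{n}\det\lp \bm{M} \rp \geq 0$. If some eigenvalue is zero then $\det\lp \bm{M} \rp = 0$ and the inequality holds trivially. Otherwise every real eigenvalue is strictly negative, and the same sign count as above yields $(-1)^{n}\det\lp \bm{M} \rp > 0$. In either case $(-1)^{n}\det\lp \bm{M} \rp \geq 0$, which contradicts the hypothesis $(-1)^{n}\det\lp \bm{M} \rp < 0$; hence $\bm{M}$ must possess an eigenvalue with positive real part.

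The argument is elementary once the determinant is expressed through the eigenvalues, so I do not anticipate any serious analytic obstacle. The one point requiring genuine care is the bookkeeping of signs: I must verify that the parity $(-1)^{n}$ matches $(-1)^{p}$ precisely because the non-real eigenvalues always pair into an even count contributing strictly positive factors, and I must isolate the degenerate cases in the converse — a zero eigenvalue forcing $\det\lp \bm{M} \rp = 0$, and purely imaginary conjugate pairs contributing positive factors — so that the non-strict inequality $(-1)^{n}\det\lp \bm{M} \rp \geq 0$ is obtained correctly rather than a strict one.
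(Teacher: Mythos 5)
Your proof is correct. Note that the paper itself offers no proof of this lemma --- it is imported verbatim from \cite{RufinoFerreira2013} --- so there is no in-paper argument to compare against; your spectral argument (determinant as the product of eigenvalues, conjugate pairs contributing positive factors so that the sign is carried by the $p$ real eigenvalues, and the parity identity $(-1)^{n}=(-1)^{p}$) is exactly the standard route, and you correctly handle the two degenerate cases in the contrapositive (a zero eigenvalue killing the determinant, and purely imaginary pairs still contributing positive factors), which is where a careless version of this argument would overclaim a strict inequality.
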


Invoking lemmas \ref{lemma:large_linear} and \ref{lemma:suff_stable} leads to the following sufficient condition for the instability of the HSS associated with an IO system (\ref{eqn:IO_system}) with commuting connectivity graphs $\cG_{j}$.

\begin{theorem}\label{thm: HSS instability det}
Consider the large-scale IO system (\ref{eqn:IO_system}) that is spatially coupled via the global interconnection matrix $\bm{P}$ (\ref{eqn:interwoven_adj_mat}) such that each $\bm{W}_{j} \in \bm{\cW}$ commute.  Denote $\cD \bm{T} \coloneqq \pd \bm{T} / \pd \bm{u}_{i}$ and let $\bm{\Lambda}_{j} = \text{diag} \lp \lambda_{1,j},...,\lambda_{r,j}  \rp$ where $\lambda_{i,j}$ is the $j$th eigenvalue of $\bm{W}_{i}$ w.r.t.  the common eigenbasis of all matrices in $\bm{\cW}$. Then the HSS $\bm{x}^{*} = \bm{1}_{N} \otimes \bm{x}_{0}$ is unstable if there exists a $\bm{\Lambda}_{j}$ such that 
\begin{equation} \label{eqn:Hss_instab_cond}
\prod_{i = 1}^{r} \lp 1  - \mu_{i,j}  \rp < 0,
\end{equation}
where $\mu_{i,j}$ are the eigenvalues of $\bm{\Lambda}_{j} \cD \bm{T} \lp \bm{u}_{0} \rp$ and $\bm{u}_{0}$ is the steady state input vector associated with $\bm{x}_{0}$.
\end{theorem}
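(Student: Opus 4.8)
The plan is to apply Lemma~\ref{lemma:suff_stable} to the reduced matrices $\bm{A} + \bm{B}\bm{\Lambda}_{j}\bm{C}$ that govern stability through Lemma~\ref{lemma:large_linear}. By that lemma the HSS is unstable precisely when $\bm{A} + \bm{B}\bm{\Lambda}_{j}\bm{C}$ fails to be stable for some $j$, and by the converse in Lemma~\ref{lemma:suff_stable} this is guaranteed once $\lp -1 \rp^{n} \det \lp \bm{A} + \bm{B}\bm{\Lambda}_{j}\bm{C} \rp < 0$. Hence the whole task reduces to computing the sign of this determinant and rewriting it through the eigenvalues $\mu_{i,j}$ of $\bm{\Lambda}_{j}\cD\bm{T}\lp \bm{u}_{0} \rp$.

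First I would relate the transfer-function derivative to the linearisation data. Differentiating $\bm{T}\lp \bm{u} \rp = \bm{h}\lp \bm{S}\lp \bm{u} \rp \rp$ together with the defining identity $\bm{f}\lp \bm{S}\lp \bm{u} \rp, \bm{u} \rp = \bm{0}$ from Lemma~\ref{lemma:aux_ss}, via the chain rule and implicit differentiation at the steady state, gives $\pd\bm{S}/\pd\bm{u} = -\bm{A}^{-1}\bm{B}$ (using that $\bm{A}$ is invertible, which follows from the assumed stability of $\bm{A}$), and therefore
\begin{equation}
\cD\bm{T}\lp \bm{u}_{0} \rp = \bm{C}\,\pd\bm{S}/\pd\bm{u} = -\bm{C}\bm{A}^{-1}\bm{B}.
\end{equation}

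Next, I would factor the determinant and reduce its dimension from $n$ to $r$. Writing $\det \lp \bm{A} + \bm{B}\bm{\Lambda}_{j}\bm{C} \rp = \det \lp \bm{A} \rp \det \lp \bm{I}_{n} + \bm{A}^{-1}\bm{B}\bm{\Lambda}_{j}\bm{C} \rp$ and applying Sylvester's determinant identity (the $n\times r$ block $\bm{A}^{-1}\bm{B}$ against the $r\times n$ block $\bm{\Lambda}_{j}\bm{C}$) yields
\begin{equation}
\det \lp \bm{I}_{n} + \bm{A}^{-1}\bm{B}\bm{\Lambda}_{j}\bm{C} \rp = \det \lp \bm{I}_{r} + \bm{\Lambda}_{j}\bm{C}\bm{A}^{-1}\bm{B} \rp = \det \lp \bm{I}_{r} - \bm{\Lambda}_{j}\cD\bm{T}\lp \bm{u}_{0} \rp \rp = \prod_{i=1}^{r}\lp 1 - \mu_{i,j} \rp,
\end{equation}
the final equality being the characteristic polynomial of $\bm{\Lambda}_{j}\cD\bm{T}\lp \bm{u}_{0} \rp$ evaluated at $1$. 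Combining the two displays gives $\det \lp \bm{A} + \bm{B}\bm{\Lambda}_{j}\bm{C} \rp = \det \lp \bm{A} \rp \prod_{i=1}^{r}\lp 1-\mu_{i,j} \rp$.

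Finally, I would fix the overall sign using the stability of $\bm{A}$. Since $\bm{A}$ is an $n\times n$ stable matrix, Lemma~\ref{lemma:suff_stable} gives $\lp -1 \rp^{n}\det \lp \bm{A} \rp > 0$, so $\lp -1 \rp^{n}\det \lp \bm{A} + \bm{B}\bm{\Lambda}_{j}\bm{C} \rp$ has the same sign as $\prod_{i=1}^{r}\lp 1-\mu_{i,j} \rp$. Thus the hypothesis (\ref{eqn:Hss_instab_cond}) forces $\lp -1 \rp^{n}\det \lp \bm{A} + \bm{B}\bm{\Lambda}_{j}\bm{C} \rp < 0$, whence $\bm{A} + \bm{B}\bm{\Lambda}_{j}\bm{C}$ has an eigenvalue with positive real part and the HSS is unstable by Lemma~\ref{lemma:large_linear}. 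The step requiring the most care — the main obstacle — is the dimension reduction via Sylvester's identity with correct sign bookkeeping: the minus sign in $\cD\bm{T} = -\bm{C}\bm{A}^{-1}\bm{B}$ is exactly what converts $\bm{I}_{r} + \bm{\Lambda}_{j}\bm{C}\bm{A}^{-1}\bm{B}$ into $\bm{I}_{r} - \bm{\Lambda}_{j}\cD\bm{T}$. I would also note that $\prod_{i}\lp 1-\mu_{i,j} \rp$ is genuinely real, being the determinant of a real matrix, so any complex $\mu_{i,j}$ contribute in conjugate pairs and cannot spoil the sign argument.
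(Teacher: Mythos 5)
Your proposal is correct and follows essentially the same route as the paper's proof: reduce to stability of $\bm{A} + \bm{B}\bm{\Lambda}_{j}\bm{C}$ via Lemma \ref{lemma:large_linear}, apply Sylvester's determinant theorem together with $\cD\bm{T}\lp \bm{u}_{0}\rp = -\bm{C}\bm{A}^{-1}\bm{B}$, and use $\lp -1\rp^{n}\det\lp\bm{A}\rp > 0$ with the converse of Lemma \ref{lemma:suff_stable}. The only difference is that you derive the transfer-function identity by implicit differentiation where the paper cites it from the literature, which is a welcome addition rather than a divergence.
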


\begin{proof}
By Lemma \ref{lemma:large_linear} we only need to show that there exists a positive eigenvalue of $\bm{A} +  \bm{B} \bm{\Lambda}_{j} \bm{C}$ for $\bm{\Lambda}_{j}$ some diagonal matrix of eigenvalues of all matrices $\bm{W}_{i} \in \bm{\cW}$ to demonstrate the instability of the HSS.  Consider $ \lp -1 \rp ^{n} \det \lp \bm{A} +  \bm{B} \bm{\Lambda}_{j} \bm{C} \rp $,  \revised{then by Sylvester's Determinant Theorem \cite{zhang2011matrix} we have that,
\begin{align}
 \lp -1 \rp ^{n} \det \lp \bm{A} +  \bm{B} \bm{\Lambda}_{j} \bm{C} \rp  &=  \lp -1 \rp ^{n}  \det\lp \bm{A} \rp \det \lp  \bm{I}_{r} +  \bm{\Lambda}_{j} \bm{C} \bm{A}^{-1} \bm{B}   \rp ,  \nonumber \\
 &= \lp -1 \rp ^{n}  \det\lp \bm{A} \rp  \det \lp  \bm{I}_{r} - \bm{\Lambda}_{j} \cD \bm{T} \lp \bm{u}_{0} \rp   \rp ,
\end{align}
where the final equality holds from $ \cD \bm{T} \lp \bm{u}_{0} \rp = - \bm{C} \bm{A}^{-1} \bm{B} $ as derived in \cite{Arcak2013}.  As $\bm{A}$ is stable by assumption we have that $\bm{A}^{-1}$ exists and $\lp -1 \rp ^{n} \det \lp  \bm{A} \rp >0$ by Lemma \ref{lemma:suff_stable}.  Therefore if $\det \lp \bm{1}_{r} - \bm{\Lambda}_{j} \cD \bm{T} \lp \bm{u}_{0} \rp \rp < 0$ then $\bm{x}^{*} =  \bm{1}_{N} \otimes \bm{x}_{0} $ is unstable,  by the converse statement of Lemma \ref{lemma:suff_stable}. }  \revised{Hence as the determinant of a matrix is the product of the eigenvalues \cite{liesen2011lineare},  we have that}
\begin{equation}\label{eqn: char_poly_set}
\det \lp \bm{I}_{r} - \bm{\Lambda}_{j} \cD \bm{T} \lp \bm{u}_{0} \rp \rp = \prod_{i = 1}^{r} \lp 1 - \mu_{i,j} \rp
\end{equation}
for all matrices $\bm{\Lambda}_{j}$ $\lp 1 \leq j \leq N \rp$.
\end{proof}
Applying the HSS instability condition derived in Theorem \ref{thm: HSS instability det} IO systems with one,  or two,  spatially dependent components,  known as single-input-single-output (SISO) and double-input-double-output (DIDO) interconnected systems produces simple forms of the instability condition (\ref{eqn:Hss_instab_cond}).  Explicitly,  the IO system (\ref{eqn:IO_system}) is SISO when $r = 1$ and DIDO when $r = 2$.  Let $\Sp \lp \bm{M} \rp$ denote the set of eigenvalues of $\bm{M}$ then,  critically,  we recover the SISO instability condition initially derived in \cite{Arcak2013} where we allow for generic intracellular kinetics here.  

\begin{corollary}\label{cor:instab_cond}
Consider the large-scale IO system (\ref{eqn:IO_system}) and denote $\cD \bm{T} \coloneqq \pd \bm{T} / \pd \bm{u}_{i}$.  Then if:
\begin{enumerate}[label=(\roman*)]
\item The IO system (\ref{eqn:IO_system}) is SISO with connectivity matrix $\bm{W}_{1}$ then the HSS $x^{*} = \bm{1}_{N} \otimes \bm{x}$ is unstable if 
\begin{equation}\label{SISO bif}
1< \lambda_{1,j}T^{\prime} \lp u_{0} \rp
\end{equation}
for some $\lambda_{1,j} \in \Sp \lp \bm{W}_{1}\rp $.
\item The IO system (\ref{eqn:IO_system}) is DIDO with global interconnection matrix $\bm{P}$ constructed by the commutative adjacency matrices $\bm{W}_{1}$ and $\bm{W}_{2}$ then the HSS $\bm{x}^{*} = \bm{1}_{N} \otimes \bm{x}$ is unstable if 
\begin{equation}\label{DIDO bif}
1 <  \Tr \lp \bm{\Lambda}_{j} \cD \bm{T} \lp \bm{u}_{0} \rp \rp   -  \det \lp \bm{\Lambda}_{j} \cD \bm{T} \lp \bm{u}_{0} \rp \rp
\end{equation}
for some $\bm{\Lambda}_{j} = \text{diag} \lp \lambda_{1,j},  \lambda_{2,j}  \rp$,  where $\lambda_{1,j} \in \Sp \lp \bm{W}_{1} \rp $ and $\lambda_{2,j} \in \Sp \lp \bm{W}_{2} \rp $ both associated with the same eigenvector.
\end{enumerate}
\end{corollary}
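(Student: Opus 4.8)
The plan is to derive both statements directly from Theorem \ref{thm: HSS instability det} by specialising the product condition (\ref{eqn:Hss_instab_cond}) to $r=1$ and $r=2$; no new instability argument is required. Since Theorem \ref{thm: HSS instability det} already guarantees instability of the HSS whenever there exists a $\bm{\Lambda}_{j}$ with $\prod_{i=1}^{r} \lp 1 - \mu_{i,j} \rp < 0$, where $\mu_{i,j}$ are the eigenvalues of $\bm{\Lambda}_{j} \cD\bm{T}\lp \bm{u}_{0} \rp$, the entire content of the corollary lies in rewriting this product in low dimension.

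For part (i), I would observe that when $r=1$ both $\bm{\Lambda}_{j}$ and $\cD\bm{T}$ collapse to scalars, namely $\bm{\Lambda}_{j} = \lambda_{1,j}$ and $\cD\bm{T} = T^{\prime}\lp u_{0} \rp$. The single eigenvalue of their product is then $\mu_{1,j} = \lambda_{1,j} T^{\prime}\lp u_{0} \rp$, so condition (\ref{eqn:Hss_instab_cond}) reduces to $1 - \lambda_{1,j} T^{\prime}\lp u_{0} \rp < 0$, which is precisely (\ref{SISO bif}) and recovers the SISO condition of \cite{Arcak2013} for generic kinetics.

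For part (ii), $\bm{\Lambda}_{j} \cD\bm{T}\lp \bm{u}_{0} \rp$ is a $2 \times 2$ matrix with eigenvalues $\mu_{1,j}, \mu_{2,j}$, and I would expand the product as
\[
\prod_{i=1}^{2} \lp 1 - \mu_{i,j} \rp = 1 - \lp \mu_{1,j} + \mu_{2,j} \rp + \mu_{1,j}\mu_{2,j}.
\]
The key step is to identify these symmetric functions of the eigenvalues with the invariants of the matrix, $\mu_{1,j} + \mu_{2,j} = \Tr\lp \bm{\Lambda}_{j} \cD\bm{T}\lp \bm{u}_{0} \rp \rp$ and $\mu_{1,j}\mu_{2,j} = \det\lp \bm{\Lambda}_{j} \cD\bm{T}\lp \bm{u}_{0} \rp \rp$, which are simply the coefficients of its characteristic polynomial and therefore need no diagonalisability of $\bm{\Lambda}_{j} \cD\bm{T}$ itself. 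Substituting and rearranging the inequality $\prod < 0$ yields (\ref{DIDO bif}).

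The argument presents no genuine obstacle, being a specialisation rather than a fresh proof, so the only care required is in bookkeeping. The main point to flag is that the scalars $\lambda_{1,j}$ and $\lambda_{2,j}$ forming $\bm{\Lambda}_{j}$ must be taken with respect to a \emph{common} eigenvector of $\bm{W}_{1}$ and $\bm{W}_{2}$; this pairing is exactly what the commutativity (hence simultaneous diagonalisability) hypothesis inherited from Theorem \ref{thm: HSS instability det} provides, and it is what justifies the clause "both associated with the same eigenvector" in the statement of (ii).
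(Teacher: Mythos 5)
Your proposal is correct and follows essentially the same route as the paper: both parts specialise the product condition of Theorem \ref{thm: HSS instability det}, with (i) reducing to the scalar case and (ii) expanding $\lp 1-\mu_{1,j}\rp\lp 1-\mu_{2,j}\rp$ via the trace and determinant of $\bm{\Lambda}_{j}\cD\bm{T}\lp\bm{u}_{0}\rp$ before rearranging. Your remark that the eigenvalue pairing in $\bm{\Lambda}_{j}$ is supplied by the commutativity hypothesis matches the paper's intent.
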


\begin{proof}
In the case of a SISO system when $r = 1$,  the $T: U \rightarrow V$ is a scalar function and we have that inequality (\ref{eqn:Hss_instab_cond}) simply becomes $1 -  \lambda_{1,j} T^{\prime} \lp u_{0} \rp <0 $ yielding the SISO condition (\ref{SISO bif}).  For a DIDO system where $r = 2$,  there are two potentially different adjacency matrices $\bm{W}_{1}$ and $\bm{W}_{2}$ that form $\bm{P}$.  Therefore from inequality (\ref{eqn:Hss_instab_cond}) we have that 
\begin{equation}\label{eqn: dido_ineq}
0 > \lp 1 - \mu_{1} \rp \lp 1 - \mu_{2} \rp = 1 + \Tr \lp -\bm{\Lambda}_{j} \cD \bm{T} \lp \bm{u}_{0} \rp \rp + \det \lp   -\bm{\Lambda}_{j} \cD \bm{T} \lp \bm{u}_{0} \rp \rp= 1 - \Tr \lp \bm{\Lambda}_{j} \cD \bm{T} \lp \bm{u}_{0} \rp \rp + \det \lp   \bm{\Lambda}_{j} \cD \bm{T} \lp \bm{u}_{0} \rp \rp 
\end{equation}
using the relations between determinant, trace and the eigenvalues of a matrix \cite{liesen2011lineare}.  Rearrangement of inequality (\ref{eqn: dido_ineq}) yields the DIDO HSS instability condition (\ref{DIDO bif}).
\end{proof}

The HSS instability conditions outlined in Theorem \ref{thm: HSS instability det} allow the study polarity regimes via graph edge weights to induce heterogeneity of cellular states within the bilayer tissues using analytic methods.  Critically,  the sufficient patterning conditions of Theorem \ref{thm: HSS instability det}  are independent of the precise intracellular kinetics are we do not impose any specific feature on the transfer function,  $\bm{T} \lp  \cdot \rp$,  other than the mild requirement of boundedness that follows immediately when modelling protein dynamics.
 % example -  no space for an example
 
In the following section,  we introduce methods of graph partitioning for templating laminar patterns in bilayer geometries that produce analytic conditions for the existence of the laminar patterns with multiple signalling mechanisms.  In particular, we show that the commutative properties of the adjacency matrices $\bm{W}_{j} \in \bm{\cW}$ required for the HSS instability condition in Theorem \ref{thm: HSS instability det} can be relaxed when seeking dichotomous cell states in bilayer structures with same layer connectivity symmetries,  namely semi-regular bilayer graphs.  In addition,  by restricting the characteristic behaviour of intracellular kinetics to competitive interactions,  we ensure that the HSS instability converges to laminar patterns by applying results from monotone dynamical system theory.

\section{Laminar pattern convergence with monotone kinetics in semi-regular bilayer graphs}\label{sec:laminar_converge}

The instability of the HSS of the IO system (\ref{eqn:IO_system}) does not imply the existence of stable heterogeneous cell states,  even in systems with a unique HSS and bounded dynamics as there may exist oscillatory or chaotic solution trajectories.  We leverage results from monotone dynamics systems and techniques of graph symmetry reduction to ensure the convergence to dichotomous cell states at the instance of HSS instability in the bilayer geometries.   These methods of discrete pattern analysis were first introduced for SISO systems in \cite{RufinoFerreira2013} and later briefly extended to MIMO systems in \cite{gyorgy2017patternMulti}.  Here we demonstrate the applicability of these methods to two-state pattern formation with pathway crosstalk kinetics in bilayer geometries.  In addition,  we emphasise the link to the corresponding large-scale IO system (\ref{eqn:IO_system}),  namely,  when are the predicted patterns in the symmetry reduced system preserved in the large-scale system.  

\subsection{Monotone kinetics for pattern convergence} \label{sec:monotone_intro}$\,$\\
Let $\bm{\phi}_{t} \lp \bm{x}_{1} \rp $ and $\bm{\phi}_{t} \lp \bm{x}_{2} \rp $ be two solutions to the IO system (\ref{eqn:IO_system}) where $\bm{x}_{1} \leq \bm{x}_{2}$ are initial conditions.  It is said that the dynamical system (\ref{eqn:IO_system}) is monotone if $\bm{\phi}_{t} \lp \bm{x}_{1} \rp \leq \bm{\phi}_{t} \lp \bm{x}_{2} \rp$ for all $t \in [0,  \infty )$ \cite{smith2008monotone}.  Furthermore,  the IO system (\ref{eqn:IO_system}) is said to be strongly monotone if $\bm{\phi}_{t} \lp \bm{x}_{1} \rp < \bm{\phi}_{t} \lp \bm{x}_{2} \rp$ for all $t \in [0,  \infty )$ \cite{smith2008monotone}.  Critically,  the property of strong monotonicity is crucial for the asymptotic convergence of solutions $\bm{\phi}_{t} \lp \bm{x} \rp $ on bounded domains $X \subset \R_{\geq 0}^{n}$,  analogous to the Monotone Convergence Theorem for bounded sequences \cite{howie2006real}.  \par

\iffalse
%this paragrpah needs to be written%
A dynamical system can be shown to be monotone by studying the sign structure of the linearised counterpart Jacobian on suitable domains.  Namely,  we consider the trajectory domain $X$ to be convex such that for $\bm{a},\bm{b} \in X$ then $t\bm{a}+ (1-t)\bm{b} \in X$ for all $t \in [0,1]$,  that is,  there exists a line segment between any two points in the domain that lies in the interior of $X$.  

Note that is does not restrict the solutions of the IO system (\ref{eqn:IO_system}) as $ \R_{\geq 0}^{n}$ is convex.  The following monotonicity condition was initially derived by Kamke in \cite{kamke1932theorie} and later interpreted in terms of the Jacobian of the dynamical system in \cite{smith2008monotone} enables the immediate identification of monotone dynamical systems.  Namely,  a function $\bm{g} \lp \cdot \rp$ is said to be type K if for each $i$,  $\bm{g}_{i}\lp \bm{a} \rp \leq \bm{g}_{i} \lp  \bm{b}\rp$ for any two points $\bm{a},\bm{b} \in X$ satisfying $\bm{a} \leq \bm{b}$ and $\bm{a}_{i} = \bm{b}_{i}$ where $X$ is a convex domain \cite{smith2008monotone}.
\fi

\revised{A dynamical system can be shown to be monotone by studying the sign structure of the associated Jacobian matrix on convex domains.  The trajectory domain $X$ is convex if for any $\bm{a},\bm{b} \in X$ then $t\bm{a}+ (1-t)\bm{b} \in X$ for all $t \in [0,1]$,  i.e.,  there exists a line segment between any two points in the domain that lies in the interior of $X$.  Note that is does not restrict the solutions of the IO system (\ref{eqn:IO_system}) as $\R_{\geq 0}^{n}$ is a convex set and $X \subset \R_{\geq 0}^{n}$.  The monotone identification via the Jacobian matrix relies on the inter-component monotonicity of vector-valued functions,  initially studied by Kamke \cite{kamke1932theorie},  leading to the classification of type K functions.  Namely,  a function $\bm{g} \lp \cdot \rp$ is said to be type K if for each $i$,  $\bm{g}_{i}\lp \bm{a} \rp \leq \bm{g}_{i} \lp  \bm{b}\rp$ for any two points $\bm{a},\bm{b} \in X$ satisfying $\bm{a} \leq \bm{b}$ and $\bm{a}_{i} = \bm{b}_{i}$ where $X$ is a convex domain \cite{smith2008monotone}.  The identification of type K functions in dynamical systems leads to the sufficient condition for monotone trajectories.  }

%Introduce Type K%
%Change this to the definition of the type K-%
\begin{lemma}[Type K monotone systems \cite{smith2008monotone}]\label{lemma:type_k}
Consider the general autonomous dynamical system
 \begin{equation}\label{eqn:general_ode}
\dot{\bm{z}} = \bm{g} \lp \bm{z} \rp,
\end{equation} where $\bm{z} \in Z$ and $Z \subset \R^{n}$ is convex.  Then the dynamical system (\ref{eqn:general_ode}) is monotone if it is type K.  Furthermore,  by the Fundamental Theorem of Calculus,  the general autonomous dynamical system (\ref{eqn:general_ode}) is guaranteed to be type K when the row-sums of the associated Jacobian satisfy
\begin{equation}
 \sum_{j \neq i} \frac{ \pd \bm{g}_{i}}{ \pd \bm{z}_{j}}  \geq 0 
\end{equation}
for all $1 \leq i \leq n$.
\end{lemma}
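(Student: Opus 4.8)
The plan is to treat the two implications of the lemma separately. The first, that a type K system is monotone, is the classical Kamke--M\"uller comparison theorem; since it is attributed to \cite{smith2008monotone} I would invoke it rather than reprove it. The underlying comparison argument is that the quasimonotonicity encoded in the type K property forces the flow $\bm{\phi}_{t}$ to preserve the partial order $\leq$: on the part of the boundary of the order cone where two ordered trajectories first touch in their $i$th coordinate (so that $\bm{a}_{i} = \bm{b}_{i}$ while $\bm{a} \leq \bm{b}$), the defining inequality $\bm{g}_{i}(\bm{a}) \leq \bm{g}_{i}(\bm{b})$ guarantees that the ordering cannot subsequently be violated. Hence the substance I actually need to establish is the second claim, namely that the Jacobian row-sum condition is sufficient for $\bm{g}$ to be type K.

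For that claim I would argue directly from the Fundamental Theorem of Calculus. Fixing an index $i$, I take arbitrary $\bm{a},\bm{b} \in X$ with $\bm{a} \leq \bm{b}$ and $\bm{a}_{i} = \bm{b}_{i}$. Because $X$ is convex, the segment $\gamma(t) = (1-t)\bm{a} + t\bm{b}$ stays in $X$ for $t \in [0,1]$, so $t \mapsto \bm{g}_{i}(\gamma(t))$ is $\mathcal{C}^{1}$ and
\begin{equation}
\bm{g}_{i}(\bm{b}) - \bm{g}_{i}(\bm{a}) = \int_{0}^{1} \sum_{j=1}^{n} \frac{\pd \bm{g}_{i}}{\pd \bm{z}_{j}}\lp \gamma(t) \rp \lp \bm{b}_{j} - \bm{a}_{j} \rp \, \dd{t}.
\end{equation}
The $j = i$ term vanishes since $\bm{b}_{i} - \bm{a}_{i} = 0$, leaving only off-diagonal contributions, each carrying the nonnegative weight $\bm{b}_{j} - \bm{a}_{j} \geq 0$. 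The goal is then to show the integrand is nonnegative, whence $\bm{g}_{i}(\bm{b}) \geq \bm{g}_{i}(\bm{a})$, and since $i$ is arbitrary $\bm{g}$ is type K.

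The hard part is precisely the passage from the integrand to the stated hypothesis. Term by term, nonnegativity is immediate once each off-diagonal partial $\pd \bm{g}_{i}/\pd \bm{z}_{j}$ is individually nonnegative (the usual cooperativity condition); the bare row-sum $\sum_{j \neq i} \pd \bm{g}_{i}/\pd \bm{z}_{j} \geq 0$ is genuinely weaker, because the weights $\bm{b}_{j} - \bm{a}_{j}$ need not coincide across $j$. The reconciliation I would exploit is the coupling structure of the IO system (\ref{eqn:IO_system}): every off-diagonal dependence of $\bm{g}_{i}$ enters only through the input $\bm{u}_{i}$ assembled from the nonnegative, row-stochastic weights in $\bm{\cW}$, so all off-diagonal partials in a fixed row inherit the common sign of the transfer-map derivative. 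Under such a common sign, a nonnegative row-sum forces every off-diagonal partial to be nonnegative on its own, which restores the term-by-term bound. I would therefore flag the identification and use of this common-sign structure as the delicate point of the argument, since without it the row-sum condition alone does not imply type K.
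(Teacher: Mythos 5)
The paper offers no proof of this lemma at all --- it is stated as a citation to \cite{smith2008monotone} --- so there is nothing to compare your argument against except the statement itself, and the most valuable part of your proposal is that you have correctly diagnosed a problem with that statement. Your handling of the first implication (type K implies monotone, via the Kamke--M\"uller comparison argument) is fine, and your FTC identity $\bm{g}_{i}\lp\bm{b}\rp-\bm{g}_{i}\lp\bm{a}\rp=\int_{0}^{1}\sum_{j\neq i}\frac{\pd \bm{g}_{i}}{\pd \bm{z}_{j}}\lp\gamma(t)\rp\lp\bm{b}_{j}-\bm{a}_{j}\rp\dd{t}$ is the right starting point. As you observe, the row-sum hypothesis only controls this integrand when the weights $\bm{b}_{j}-\bm{a}_{j}$ coincide across $j$, i.e.\ for comparisons along the all-ones direction. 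In fact, for a $\mathcal{C}^{1}$ field on an open convex domain, type K is \emph{equivalent} to $\pd\bm{g}_{i}/\pd\bm{z}_{j}\geq 0$ for all $j\neq i$ (take $\bm{b}=\bm{a}+\epsilon\bm{e}_{j}$ and let $\epsilon\to 0$), so the bare row-sum condition cannot suffice: $g_{1}=z_{2}-z_{3}/2$, $g_{2}=g_{3}=0$ has nonnegative off-diagonal row-sums yet violates type K at $\bm{a}=\bm{0}$, $\bm{b}=\bm{e}_{3}$. The condition actually proved in \cite{smith2008monotone} is elementwise nonnegativity of the off-diagonal Jacobian entries.

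The genuine gap is therefore in the rescue you build on top of this correct diagnosis. First, the lemma is stated for a \emph{general} autonomous system $\dot{\bm{z}}=\bm{g}\lp\bm{z}\rp$, so the IO coupling structure of system (\ref{eqn:IO_system}) is not a hypothesis and cannot be imported into its proof. Second, and more seriously, the common-sign claim fails exactly where the paper later invokes the lemma: in the transformed Jacobians of Theorem \ref{thm: existence_of_laminar} and Lemma \ref{lemma:large_scale_mono}, a fixed row contains off-diagonal entries of the form $a_{k}t_{ij}$ (from the diagonal blocks, nonpositive when $\cD\bm{T}$ has sign structure $\cS_{2}$) alongside entries of the form $-\lp 1-a_{k}\rp t_{ij}$ (from the interlayer blocks, nonnegative), so for $r\geq 2$ the signs within a row are mixed and a nonnegative row-sum does not force each term to be nonnegative. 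Your proof as proposed therefore does not close, and no proof of the second claim as literally stated can close, because the claim is false. The repair is to replace the row-sum hypothesis by the elementwise condition $\pd\bm{g}_{i}/\pd\bm{z}_{j}\geq 0$ for $j\neq i$ (under which your term-by-term FTC argument goes through immediately), and then to verify that elementwise condition, rather than a row-sum, in each downstream application.
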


A direct consequence of Lemma \ref{lemma:type_k} is that the IO system (\ref{eqn:IO_system}) is monotone provided that all off-diagonal components of the associated Jacobian are non-negative for all $\bm{x} \in X$ as previously applied in large-scale IO pattern formation studies \cite{gyorgy2017pattern,Arcak2013}.  In addition,  Hirsch \cite{hirsch1983differential} provided a sufficient condition for strong monotonicity that is dependent on the irreducibility of the Jacobian of the dynamical system.  Specifically,  a matrix $\bm{M}$ is said to irreducible if there exists no permutation matrix $\bm{U}$ such that $\bm{U}^{T} \bm{M} \bm{U}$ is in upper block triangular form \cite{zhang2011matrix}.  

\begin{lemma}[\cite{smith2008monotone}]\label{lemma:strongly_monotone}
Consider the dynamical system (\ref{eqn:general_ode}) as in Lemma \ref{lemma:type_k}.  If the Jacobian,  $\frac{\pd \bm{g}}{ \pd \bm{z}}$,  is irreducible and type K for all $z \in Z$ then system (\ref{eqn:general_ode}) is strongly monotone.
\end{lemma}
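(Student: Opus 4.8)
The plan is to reduce the claim to a statement about the linear variational equation satisfied by the difference of two ordered trajectories, and then to prove a Perron--Frobenius type positivity result for that linear system. Fix initial data $\bm{x}_{1} \leq \bm{x}_{2}$ with $\bm{x}_{1} \neq \bm{x}_{2}$ and write $\bm{w} \lp t \rp = \bm{\phi}_{t} \lp \bm{x}_{2} \rp - \bm{\phi}_{t} \lp \bm{x}_{1} \rp$. Using the Fundamental Theorem of Calculus along the segment joining the two trajectories, I would write $\bm{g} \lp \bm{\phi}_{t} \lp \bm{x}_{2} \rp \rp - \bm{g} \lp \bm{\phi}_{t} \lp \bm{x}_{1} \rp \rp = \bm{A} \lp t \rp \bm{w} \lp t \rp$, where $\bm{A} \lp t \rp = \int_{0}^{1} \frac{\pd \bm{g}}{\pd \bm{z}} \lp s \bm{\phi}_{t} \lp \bm{x}_{2} \rp + \lp 1-s \rp \bm{\phi}_{t} \lp \bm{x}_{1} \rp \rp \dd{s}$ is the Jacobian averaged over the connecting segment; convexity of $Z$ guarantees this segment lies in $Z$, so $\bm{A} \lp t \rp$ is well defined. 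Hence $\bm{w}$ solves the nonautonomous linear system $\dot{\bm{w}} = \bm{A} \lp t \rp \bm{w}$, and strong monotonicity reduces to showing that its flow maps the nonzero, nonnegative cone into the strictly positive cone for every $t>0$.

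Second, I would transfer the structural hypotheses from $\pd \bm{g} / \pd \bm{z}$ to $\bm{A} \lp t \rp$. Because the off-diagonal entries of the Jacobian are nonnegative throughout the convex set $Z$ (the type K property), each off-diagonal entry of $\bm{A} \lp t \rp$, being an average of nonnegative quantities, is nonnegative. For irreducibility, I would argue that on choosing any point $\bm{z}_{0}$ on the segment the positive off-diagonal pattern of $\pd \bm{g} / \pd \bm{z} \lp \bm{z}_{0} \rp$ is strongly connected by hypothesis; by continuity each such positive entry stays positive on a neighbourhood meeting the segment in positive measure, so the corresponding entry of $\bm{A} \lp t \rp$ is strictly positive, whence $\bm{A} \lp t \rp$ is irreducible.

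Third, I would prove positivity of the flow. On any compact time interval the trajectories stay in a bounded set, so the diagonal of $\bm{A} \lp t \rp$ is bounded and I can pick $\alpha > 0$ with $\bm{B} \lp t \rp \coloneqq \bm{A} \lp t \rp + \alpha \bm{I}_{n} \geq 0$ entrywise. Substituting $\bm{w} = e^{-\alpha t} \bm{u}$ gives $\dot{\bm{u}} = \bm{B} \lp t \rp \bm{u}$ with the same sign as $\bm{w}$, and the Peano--Baker (Neumann) series for its fundamental matrix $\bm{\Psi} \lp t \rp = \bm{I}_{n} + \int_{0}^{t} \bm{B} + \int_{0}^{t}\bm{B} \int_{0}^{s} \bm{B} + \cdots$ is a sum of nonnegative matrices, so $\bm{\Psi} \lp t \rp \geq 0$; this already yields monotonicity. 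To upgrade to strong monotonicity I would use irreducibility: since the directed graph of the positive entries of $\bm{B} \lp t \rp$ is strongly connected, any two indices are joined by a path of length at most $n-1$, so the successive terms of the series fill in all entries and $\bm{\Psi} \lp t \rp \gg 0$ for every $t>0$. Then $\bm{w} \lp 0 \rp = \bm{x}_{2} - \bm{x}_{1} \geq 0$, $\neq 0$ forces $\bm{w} \lp t \rp = e^{-\alpha t} \bm{\Psi} \lp t \rp \bm{w} \lp 0 \rp \gg 0$, i.e. $\bm{\phi}_{t} \lp \bm{x}_{1} \rp < \bm{\phi}_{t} \lp \bm{x}_{2} \rp$ for $t>0$, which is strong monotonicity.

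The main obstacle is the final positivity upgrade: turning entrywise nonnegativity of $\bm{\Psi} \lp t \rp$ into strict positivity uniformly for all $t>0$. This is where irreducibility is essential and where care is needed, since the positive pattern of $\bm{A} \lp t \rp$ may vary with $t$; the argument must show that strong connectivity of the pattern at a single point already forces every iterated-integral term of the series to be strictly positive, which is the time-dependent analogue of the Perron--Frobenius statement that a nonnegative irreducible matrix becomes strictly positive after $n-1$ powers. Verifying the transfer of irreducibility to the averaged matrix $\bm{A} \lp t \rp$ in the second step is the other delicate point, as it rests on continuity rather than on a fixed sign pattern.
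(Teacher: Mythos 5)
The paper does not actually prove this lemma: it is quoted directly from Smith's monograph on monotone dynamical systems and used as a black box, so there is no in-paper argument to compare against. Your proposal is a correct reconstruction of the standard proof of that cited result. The three ingredients you use --- the mean-value form $\dot{\bm{w}} = \bm{A}\lp t \rp \bm{w}$ for the difference of two ordered trajectories (valid because $Z$ is convex), the transfer of entrywise off-diagonal nonnegativity and irreducibility to the averaged Jacobian $\bm{A}\lp t \rp$, and the Peano--Baker series argument showing the fundamental matrix of the shifted system is entrywise positive --- are exactly the classical route. You have also correctly isolated the only genuinely delicate point, namely that the positive off-diagonal pattern of $\bm{A}\lp t \rp$ may vary with $t$; your fix works: fix any $t_{0} \in \lp 0, t \rp$, extract a strongly connected positive pattern of $\pd \bm{g} / \pd \bm{z}$ at a point of the connecting segment at time $t_{0}$, and use continuity to keep that single pattern positive on a subinterval of positive length, so that the iterated integral of order at most $n-1$ corresponding to a path in that fixed pattern picks up a strictly positive contribution from a positive-measure region of the time simplex. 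Two small points of care: take $\alpha$ strictly larger than the supremum of the diagonal entries so that the diagonal of $\bm{B}\lp t \rp$ is strictly positive (you need this to pad paths of differing lengths into a single series term), and note that you are reading ``type K'' as entrywise nonnegativity of the off-diagonal Jacobian entries; the paper's Lemma on type K systems is phrased in terms of off-diagonal row sums, but the entrywise reading is the one under which the cited result is true and is the one the paper itself uses in all subsequent applications.
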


The combination of lemmas \ref{lemma:type_k} and \ref{lemma:strongly_monotone} yield sufficient conditions for the identification of strongly monotone dynamical systems using standard linearisation methods,  which are particularly applicable to interconnected dynamical systems.  Namely,  connected graphs have irreducible adjacency matrices \cite{godsil2001algebraicBook}.\par

Time-dependent monotone systems are often be characterised into two distinctive classes: cooperative dynamics where all solutions are monotone in forward-time $(t \rightarrow \infty)$,  and competitive dynamics where all solutions are monotone in backward-time $(t \rightarrow - \infty)$ \cite{smith2008monotone}.  It has previously been demonstrated that competitive dynamics lead to pattern generation in large-scale IO systems,  specifically,  when studying processes of mutual cellular inhibition which are a common feature of cell-fate dynamics in developing tissues \cite{Arcak2013}. \revisedTC{ For example,  the lateral-inhibition interactions of Notch1 and Delta1 are often found in tissues with a dichotomy of spatially organised cell-types and conform to the monotone competitive description \cite{Lloyd-Lewis2019,  Collier1996}. } Subsequently,  we focus our attention on competitive intracellular kinetics which leads to the following assumption on the behaviour of the transfer function $\bm{T} \lp \cdot \rp $ to ensure the asymptotic convergence of solutions with tissue heterogeneity.
\vspace{1em}
\begin{enumerate}[label=(A\arabic*)]
\item  The derivative of the transfer function $\cD\bm{T} \lp \bm{u} \rp$ of the IO system (\ref{eqn:IO_system}) has one of the following sign structures
\begin{equation*}
 \cS_{1} =  \begin{bmatrix}
           - & + & \cdots & - & +  \\
           + & - & \cdots & + & -  \\
             &  & \vdots &  &   \\
             - & + & \cdots & - & +  \\
           + & - & \cdots & + & -  \\
           \end{bmatrix} 
            \qquad 
           \cS_{2} = \begin{bmatrix}
           - & - & \cdots & - & -  \\
           - & - & \cdots & - & -  \\
             &  & \vdots &  &   \\
             - & - & \cdots & - & -  \\
           - & - & \cdots & - & -  \\
           \end{bmatrix}
\end{equation*}
for all $\bm{u} \in U$ where any sign can be replaced by zero provided $\cD\bm{T} \lp \bm{u} \rp$ is irreducible.
%this implies that each component is non-zero if it has a sign,  thus, the signless values tij are bounded below by zero,  which implies type k  bound can hold 
\end{enumerate}
\vspace{1em}
Critically,  the conditions imposed on the intracellular kinetics by (A1) are not restrictive in the context of cellular pattern formation as activation and repression of intracellular signals are typically modelled using monotonic functions,  such as Hill or logistic functions that relate to Michaelis-Menten kinetics for enzyme-catalyst reactions \cite{klipp2006mathematical}.  Furthermore,  the irreducibility of $\cD\bm{T} \lp \bm{u} \rp$ follows immediately if there exist no zero entries,  that is,  each spatially dependent component is continuously dependent on all other spatially dependent components.  \par

In the following section,  we will use the competitive properties of the transfer function to predict the existence of laminar pattern formation in bilayer geometries graph partitioning.  In particular,  we focus on the analysis of the transfer function,  as this considers only the spatially dependent components of the IO system (\ref{eqn:IO_system}),  which potentially reduces the dimensionality of the analysis while preserving the underlying behaviour of the system.

% Introduce monotone dynamical systems
% Introduce strongly monotone dynamical systems
% conditions for a system to be monotone
% introduce irreducible matrices 
%Lemma from Hirsch - irreduc -> strongly monotone 

% Outline the first assumption of the sign structure of DT,  i.e.  monotone Transfer kinetics and irreducible.

\subsection{Dimension reduction by graph partition for polarity laminar pattern existence}\label{sec:quotient_pattern}$\,$ \\
%we have most of the paper up to here - Note that we focus of binary states (i.e.  cell-fate determination) therefore we can remove the commutativity requriement
Methods of graph partitioning have previously been employed in large-scale IO systems to predict the existence of patterns with a predefined pattern structure \cite{RufinoFerreira2013, gyorgy2017pattern}.  These predefined pattern structures allow for the construction of bespoke systems by exploiting the symmetries of the cellular connectivity graphs, $\cG_{k}$,  thereby analysing only representative vertices from each pattern partition of the large-scale graphs,  vastly reducing the dimensionality associated IO systems.  Under the assumption of monotone transfer kinetics (A1),  we provide sufficient conditions for the existence of polarity-driven laminar patterns in bilayer geometries with multiple spatially dependent components using graph partitioning.  Critically,  we demonstrate the prior requirement of commutative connectivity graphs $\cG_{j}$ can be relaxed when seeking patterns with only two contrasting states.\par

The method of pattern templating via graph partitions seeks to group cells that are assumed to have the same steady-state solutions and therefore impose that cells within the same group behave identically.  This assumption allows for the study of two representative cells from each layer in the bilayer large-scale graphs, $\cG_{k}$,  to predict the existence of laminar patterns as shown in Figure \ref{fig:quotient_example}.  Formally,  we are assuming the existence of an equitable partition, $\pi_{2}$,  of the vertices $v_{i} \in V$ into the pattern groups $\cL_{1}$ and $\cL_{2}$ of each layer for all connectivity graphs $\cG_{k}$.  This means that $v \in \cL_{i}$ has the same number of adjacent vertices in both $\cL_{1}$ and $\cL_{2}$,  independent of the vertex,  $v$ \cite{godsil2001algebraic}.  We are imposing that cells within the same layer have the same edge connectivity structure,  and therefore the connectivity graphs $\cG_{k}$ must be layer-wise regular as highlighted in Figure \ref{fig:quotient_example}.  Algebraically,  the partition $\pi_{2}$ is equitable if there exists some $\overline{w}_{ij}^{[k]} > 0$ such that

\begin{equation}\label{eqn:quotient_constants}
\sum_{v \in \cL_{j}} w_{uv}^{[k]} = \overline{w}_{ij}^{[k]} \quad \forall u \in \cL_{i},
\end{equation}
where $\hat{w}_{ij}^{[k]}$ are the $ij$-th elements of the row-stochastic adjacency matrix $\bm{W}_{k} \in \bm{\cW}$ \cite{godsil2001algebraic}.  In addition,  we say that the laminar pattern partition, $\pi_{2}$,  is simultaneously equitable if $\pi_{2}$ is equitable for all graphs $\cG_{k}$.

%% partition figure here

\begin{figure}[h!]
\includegraphics[width=0.6\textwidth]{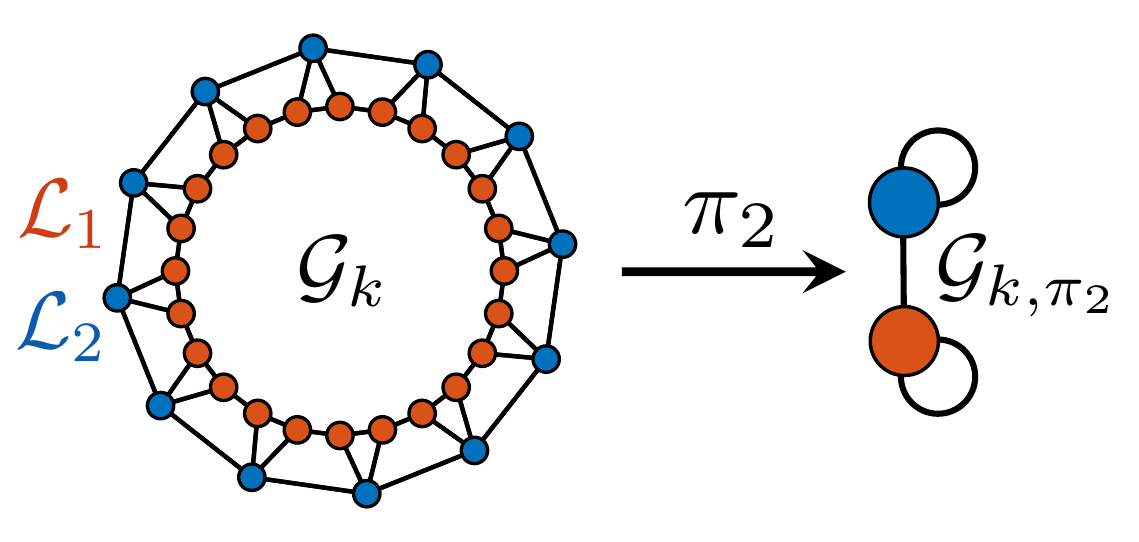}
\caption{Templating for laminar patterns in bilayer geometries using the equitable partition $\pi_{2}$.  The cellular connectivity graph, $\cG_{k}$,  is semi-regular such vertices within the same layer,  $\cL_{1}$ or $\cL_{2}$,  have the same number of adjacent vertices in each of the layers which induce an edge symmetry with respect to vertices in the same layer.  The equitable partition,  $\pi_{2}$,  leverages the edge symmetries of $\cG_{k}$ to generate a quotient graph $\cG_{k,\pi_{2}}$ consisting of two representative cells,  one from each layer $\cL_{1}$ and $\cL_{2}$. } \label{fig:quotient_example}
\end{figure}

Let $\overline{\bm{W}}_{k} \in \R_{\geq 0}^{2 \times 2} $ be the reduced adjacency matrix for the quotient graph $\cG_{k,\pi_{2}} = \cG_{k} / \pi_{2}$ as depicted in Figure \ref{fig:quotient_example},  that are element-wise composed with the constants defined by equation (\ref{eqn:quotient_constants}).  Applying the IO preserving interconnection matrix definition (\ref{eqn:interwoven_adj_mat}) to the set of reduced adjacency matrices,  we have the reduced interconnection matrix of the form,
\begin{equation}\label{eqn:reduced_interweave}
\overline{\bm{P}} = \sum_{i = 1}^{r} \overline{\bm{W}}_{i} \otimes \bm{D}_{i},
\end{equation}
noting that the row-stochastic property of each $\bm{W}_{i} \in \bm{\cW}$ is preserved in the quotient mapping such that each $\overline{\bm{W}}_{i}$ is row-stochastic.  In particular,  as the partition $\pi_{2}$ allocates the vertices $v\in V$ into either of the sets,  $\cL_{1}$ or $\cL_{2}$,  each reduced adjacency matrix is of the form,
\begin{equation}\label{eqn:reduce_adj_mat}
\overline{\bm{W}}_{i} = \begin{bmatrix}
a_{i} & 1 - a_{i} \\
1 - b_{i} & b_{i}
\end{bmatrix}
\end{equation}
for all $1 \leq i \leq r$,  where $a_{i},b_{i} \in (0,1)$ are composed of the polarity weights $w_{1}^{[i]}$ and $w_{2}^{[i]}$.  Explicitly,  $a_{i}$ and $b_{i}$ have the layer-dependent form
\begin{equation} \label{eqn:a_b_def}
a_{i} = \frac{n_{1,\cL_{1}}^{[i]}w_{1}^{[i]}  }{ n_{1,\cL_{1}}^{[i]}w_{1}^{[i]}  + n_{2,\cL_{1}}^{[i]}w_{2}^{[i]}   } \quad \text{and} \quad b_{i} = \frac{n_{1,\cL_{2}}^{[i]}w_{1}^{[i]}  }{ n_{1,\cL_{2}}^{[i]}w_{1}^{[i]}  + n_{2,\cL_{2}}^{[i]}w_{2}^{[i]}   }, 
\end{equation} 
where the superscripts correspond to the spatial connectivity mechanism, $i$,  and $ n_{1,\cL_{j}}^{[i]} \geq 1$ and $ n_{2,\cL_{j}}^{[i]} \geq 1$ are the number of connected vertices in the same and opposing layer,  respectively,  from the perspective of each layer, $j = 1,2$.  For example,  $ n_{1,\cL_{1}}^{[k]} = n_{1,\cL_{2}}^{[k]}  = 2$,  $n_{2,\cL_{1}}^{[k]}  = 1$ and $n_{2,\cL_{2}}^{[k]}  = 2$ for $\cG_{k}$ in Figure \ref{fig:quotient_example}.
\par

A key property of the equitable partition, $\pi_{2}$,  is the preservation of eigenvalues when mapping between the large-scale and quotients graphs,  that is,  $\Sp \lp \overline{\bm{W}}_{i} \rp \subset \Sp \lp \bm{W}_{i} \rp $ \cite{godsil2001algebraic}.  Using this property,  any spatially driven instability of the HSS observed in the quotient system also exists in the associated large-scale system.  \revised{However,  to apply the HSS instability conditions derived in Theorem \ref{thm: HSS instability det} to large-scale connectivity graphs,  we require that all $\overline{\bm{W}}_{i}$ must commute to generate a common eigenbasis for simultaneous diagonalisation.  Commutativity is not preserved in the quotient transformation in general due to the reduced form of equation (\ref{eqn:reduce_adj_mat}).  Although,  the following statement enables the use of the HSS instability conditions independent of the commutative properties of $\overline{\bm{W}}_{i}$ by demonstrating the existence of a common eigenbasis for all reduced adjacency matrices partitioned by $\pi_{2}$,  independent of commutativity.}

\begin{lemma}\label{lemma: sim_upper_triangular}
Let $\overline{\bm{P}} \in \R_{\geq 0 }^{2r \times 2r}$ be the  reduced mixed interconnection matrix\autoref{eqn:reduced_interweave} associated with the equitable partition $\pi_{2}$.  Given any matrix $\widetilde{\bm{M}} \in \R^{r \times r}$ where $\bm{M} = \bm{I}_{2} \otimes \widetilde{\bm{M}}$ the eigenvalues of $\overline{\bm{P}}\bm{M}$ are those of $\widetilde{\bm{M}}$ and $\overline{\bm{\Lambda}}_{2}\widetilde{\bm{M}}$ where $\overline{\bm{\Lambda}}_{2} = \text{diag} \lp a_{1} + b_{1} - 1, ..., a_{r} + b_{r} - 1 \rp$.
\end{lemma}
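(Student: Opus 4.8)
The plan is to diagonalise the two-dimensional ``layer'' factor of $\overline{\bm{P}}\bm{M}$ while leaving the signalling factor untouched, exploiting the fact that every reduced adjacency matrix $\overline{\bm{W}}_{i}$ in \eqref{eqn:reduce_adj_mat} is $2\times 2$ and row-stochastic. Each such $\overline{\bm{W}}_{i}$ therefore carries the eigenvalue $1$ with the common right eigenvector $\bm{1}_{2}=[1,1]^{T}$, and a second eigenvalue equal to $\det\overline{\bm{W}}_{i}=a_{i}b_{i}-(1-a_{i})(1-b_{i})=a_{i}+b_{i}-1$, which is precisely the $i$-th diagonal entry of $\overline{\bm{\Lambda}}_{2}$. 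The crucial observation is that although the $\overline{\bm{W}}_{i}$ need not commute, since their second eigenvectors generally differ, they share the single eigenvector $\bm{1}_{2}$. Hence any invertible $\bm{S}\in\R^{2\times 2}$ whose first column is $\bm{1}_{2}$ simultaneously upper-triangularises all of them, giving $\bm{S}^{-1}\overline{\bm{W}}_{i}\bm{S}=\left[\begin{smallmatrix} 1 & \beta_{i} \\ 0 & a_{i}+b_{i}-1 \end{smallmatrix}\right]$ for some scalars $\beta_{i}$.

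First I would lift $\bm{S}$ to the similarity $\bm{S}\otimes\bm{I}_{r}$ and conjugate $\overline{\bm{P}}\bm{M}$ by it. Using the mixed-product property of the Kronecker product, $\bm{M}=\bm{I}_{2}\otimes\widetilde{\bm{M}}$ is invariant under this conjugation, while $\overline{\bm{P}}=\sum_{i}\overline{\bm{W}}_{i}\otimes\bm{D}_{i}$ becomes $\sum_{i}(\bm{S}^{-1}\overline{\bm{W}}_{i}\bm{S})\otimes\bm{D}_{i}$, so that $(\bm{S}^{-1}\otimes\bm{I}_{r})\,\overline{\bm{P}}\bm{M}\,(\bm{S}\otimes\bm{I}_{r})=\sum_{i}(\bm{S}^{-1}\overline{\bm{W}}_{i}\bm{S})\otimes(\bm{D}_{i}\widetilde{\bm{M}})$. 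Reading off the $2\times 2$ block structure in the layer index, the $(1,1)$, $(2,1)$ and $(2,2)$ blocks are $\sum_{i}\bm{D}_{i}\widetilde{\bm{M}}$, $\bm{0}$ and $\sum_{i}(a_{i}+b_{i}-1)\bm{D}_{i}\widetilde{\bm{M}}$ respectively. Because the $\bm{D}_{i}$ satisfy $\sum_{i}\bm{D}_{i}=\bm{I}_{r}$ and $\sum_{i}(a_{i}+b_{i}-1)\bm{D}_{i}=\overline{\bm{\Lambda}}_{2}$ by construction, the conjugated matrix is block upper-triangular with diagonal blocks $\widetilde{\bm{M}}$ and $\overline{\bm{\Lambda}}_{2}\widetilde{\bm{M}}$; the $(1,2)$ block plays no role.

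To conclude, I would invoke that the spectrum of a block upper-triangular matrix is the union of the spectra of its diagonal blocks and that the spectrum is invariant under similarity, giving $\Sp(\overline{\bm{P}}\bm{M})=\Sp(\widetilde{\bm{M}})\cup\Sp(\overline{\bm{\Lambda}}_{2}\widetilde{\bm{M}})$, which is exactly the claim.

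The step I expect to be the main obstacle, and indeed the conceptual heart of the result, is justifying the simultaneous upper-triangularisation in the absence of commutativity; everything downstream is bookkeeping with the Kronecker identities $\sum_{i}\bm{D}_{i}=\bm{I}_{r}$ and $\sum_{i}\mu_{i}\bm{D}_{i}=\overline{\bm{\Lambda}}_{2}$. It is precisely the restriction to two pattern states, and hence to $2\times 2$ reduced blocks possessing the guaranteed shared stochastic eigenvector $\bm{1}_{2}$, that makes the common triangularising $\bm{S}$ available and allows the commutativity hypothesis of Theorem \ref{thm: HSS instability det} to be relaxed.
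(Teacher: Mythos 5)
Your proposal is correct and follows essentially the same route as the paper's proof: exploit the shared eigenvector $\bm{1}_{2}$ of the row-stochastic $2\times 2$ reduced adjacency matrices to build a single similarity (the paper's $\widetilde{\bm{R}}$, your $\bm{S}$) that simultaneously upper-triangularises all $\overline{\bm{W}}_{i}$, lift it by $\otimes\,\bm{I}_{r}$, and read off the block upper-triangular form with diagonal blocks $\widetilde{\bm{M}}$ and $\overline{\bm{\Lambda}}_{2}\widetilde{\bm{M}}$. Your explicit remark that \emph{any} invertible $\bm{S}$ with first column $\bm{1}_{2}$ works, and your determinant computation of the second eigenvalue, are minor presentational refinements of the same argument.
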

\begin{proof}
By definition of the family of the reduced adjacency matrices\autoref{eqn:reduce_adj_mat},  Spec$\lp \overline{\bm{W}}_{i} \rp = \{ 1,  a_{i} + b_{i} - 1 \}$,  where all reduced adjacency matrices share the common eigenvector $\bm{v}_{1} = \bm{1}_{2}$,  associated with the common eigenvalue $\overline{\lambda}_{i,1} = 1$.  Without loss of generality,  let $\widetilde{\bm{R}}$ be the transformation matrix for $\overline{\bm{W}}_{1}$ such that $\widetilde{\bm{R}}^{-1}  \overline{\bm{W}}_{1} \widetilde{\bm{R}} $ is in Jordan normal form \cite{jorg2015linear}.  Specifically,  as $\bm{v}_{1}$ must represent a column of $\widetilde{\bm{R}}$ as it is an eigenvector for all $\overline{\bm{W}}_{i} $,  then let $\bm{v}_{1}$ form the first column of $\widetilde{\bm{R}}$ such that $\widetilde{\bm{R}}^{-1}  \overline{\bm{W}}_{1} \widetilde{\bm{R}} $ has diagonal entries $1$ and $a_{1}+b_{1} - 1$,  respectively.  Moreover, as each $\overline{\bm{W}}_{i} \in \R_{\geq0}^{2 \times 2}$,  then  $\widetilde{\bm{R}}^{-1}  \overline{\bm{W}}_{i} \widetilde{\bm{R}} $ must be upper triangular form as 1 is a common eigenvalue for all $1 \leq i \leq r$,  that is, $\widetilde{\bm{R}}$ simultaneously upper triangularises the family of reduced adjacency matrices such that each $\widetilde{\bm{R}}^{-1}  \overline{\bm{W}}_{i} \widetilde{\bm{R}} $ has diagonal entries 1 and $ a_{i} + b_{i} - 1 $.\par

Consider the invertible transformation $\bm{R} = \widetilde{\bm{R}} \otimes \bm{I}_{r}$.  Denote the adjacency triangulation transformation of $\overline{\bm{P}}\bm{M}$ by $\bm{H} = \bm{R}^{-1} \overline{\bm{P}}\bm{M} \bm{R} $. Therefore,  we have that 
\begin{align}
\bm{H} &= \lp \widetilde{\bm{R}}^{-1} \otimes \bm{I}_{r}\rp \overline{\bm{P}}\bm{M} \lp \widetilde{\bm{R}} \otimes \bm{I}_{r}\rp,  \nonumber \\
&=  \lp \widetilde{\bm{R}}^{-1} \otimes \bm{I}_{r}\rp         \lp \sum_{i=1}^{r} \overline {\bm{W}}_{i} \otimes \bm{D}_{i} \rp  \lp  \bm{I}_{2} \otimes \widetilde{\bm{M}}  \rp         \lp \widetilde{\bm{R}} \otimes \bm{I}_{r}\rp,  \nonumber \\
&=  \sum_{i=1}^{r}   \widetilde{\bm{R}}^{-1}  \overline {\bm{W}}_{i}   \widetilde{\bm{R}} \otimes \bm{D}_{i} \widetilde{\bm{M}}.
\end{align}
Specifically,  $\bm{H}$ is of block upper triangle form such that
\begin{equation}
\bm{H} = \begin{bmatrix}
\bm{I}_{r} \widetilde{\bm{M}} & \bm{Z} \widetilde{\bm{M}} \\
\bm{0} & \overline{\bm{\Lambda}}_{2}  \widetilde{\bm{M}},
\end{bmatrix},
\end{equation}
where $\bm{Z}$ is some real $r \times r$ matrix constructed by interweaving the upper right entries of the transformed reduced adjacency matrices.  Thus the eigenvalues of $\bm{H}$ are those of $\widetilde{\bm{M}}$ and $\overline{\bm{\Lambda}}_{2}  \widetilde{\bm{M}}$,  and therefore are the eigenvalues of $\overline{\bm{P}}\bm{M}$ via bijective transformation defined by $\bm{R}$.
\end{proof}

Subsequently,  by seeking the existence of laminar patterns using the partition $\pi_{2}$,  Lemma \ref{lemma: sim_upper_triangular} enables an analytic approach to determine the spatially driven instability of the HSS with any combination of layer-wise semi-regular bilayer graphs.  Specifically,  we need only determine the eigenvalues of $\cD \bm{T} \lp \bm{u}^{*} \rp$ to ensure the HSS instability condition (\ref{eqn:Hss_instab_cond}) is satisfied.  \par

By applying the strongly monotone properties of the transfer kinetics outlined in Section \ref{sec:monotone_intro},  we seek to ensure the asymptotic convergence of heterogeneous solutions in the instance of HSS instability.  However,  it can be shown (see Lemma \ref{lemma:permutation} in Appendix \ref{Appendix:Interweave}) that the interconnection matrix, ${\bm{P}}$,  and consequently the reduced interconnection matrix $\overline{\bm{P}}$ is reducible,  and therefore unable to conform to the strongly monotone criteria in Lemma \ref{lemma:strongly_monotone}.  However,  we recover the irreducibility of $\bm{P}$ and $\overline{\bm{P}}$ by multiplication with a suitable class of matrices.

\begin{lemma}\label{lemma:irreducible}
Let $\bm{P}$ be the mixed interconnection matrix (\ref{eqn:interwoven_adj_mat}) and $\bm{Q} = \text{diag} \lp \bm{Q}_{1}, ..., \bm{Q}_{N} \rp$ such that $\bm{Q}_{k} \in \R^{r \times r}$ is irreducible for each $k \in \{  1,  ..., N\}$.  Then $\bm{PQ}$ is irreducible. 
\end{lemma}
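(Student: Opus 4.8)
The plan is to work entirely with the sparsity pattern of $\bm{PQ}$ and to translate irreducibility into strong connectivity of an associated directed graph. First I would compute the entries of the product. Using $\bm{P} = \sum_{j} \bm{W}_{j}\otimes\bm{D}_{j}$ together with the cell-wise block-diagonal form of $\bm{Q}$, a direct computation in the cell-major ordering gives, for the block indexed by cells $i,k$ and components $a,b$,
\begin{equation}\label{eqn:PQ_entries}
\lp \bm{PQ} \rp_{(i,a),(k,b)} = \lp \bm{W}_{a} \rp_{ik}\,\lp \bm{Q}_{k} \rp_{ab}.
\end{equation}
The structural point is that only a single term of the defining sum survives: $\bm{D}_{j}$ forces $\bm{P}$ to preserve the component index, and $\bm{Q}$ is block diagonal in the cells. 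Hence there is no cancellation, and the support of $\bm{PQ}$ is exactly the set of index pairs for which both $\lp \bm{W}_{a}\rp_{ik}\neq0$ and $\lp\bm{Q}_{k}\rp_{ab}\neq0$. Since irreducibility depends only on the sparsity pattern, it suffices to prove that the directed graph $\Gamma$ on the vertices $(i,a)$, with an edge $(i,a)\to(k,b)$ precisely when the right-hand side of \eqref{eqn:PQ_entries} is nonzero, is strongly connected. The cases $N=1$ and $r=1$ are immediate, since then $\bm{PQ}$ equals $\bm{Q}_{1}$, respectively inherits the pattern of $\bm{W}_{1}$; so I assume $N,r\geq2$.

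I would then establish strong connectivity of $\Gamma$ in two stages, separating the ``cell'' direction from the ``component'' direction. Fix a component $a$ and consider the layer $\{(i,a): 1\le i\le N\}$: by \eqref{eqn:PQ_entries} an edge $(i,a)\to(k,a)$ is present exactly when $v_i$ and $v_k$ are adjacent in $\cG_{a}$ and $\lp\bm{Q}_{k}\rp_{aa}\neq0$. Because each $\cG_{a}$ is connected and undirected, this shows that each layer is internally strongly connected, \emph{provided} the diagonal entries of the $\bm{Q}_{k}$ do not vanish. This intra-layer reachability is the step I expect to be the main obstacle: the product couples each cell-move to a simultaneous component-change, so the diagonal entries of the blocks are what allow one to traverse $\cG_{a}$ while remaining in layer $a$. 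If some $\lp\bm{Q}_{k}\rp_{aa}=0$ this is no longer possible and a bipartite-type parity obstruction can disconnect $\Gamma$; the diagonal entries must therefore be used. In the present setting they are nonzero — this is precisely the content of the sign structures $\cS_{1}$ and $\cS_{2}$ of assumption (A1), whose diagonal is negative — so intra-layer strong connectivity holds for every layer.

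It then remains only to link the layers, which is where the irreducibility of the individual blocks enters. I would pass to the quotient directed graph obtained by contracting each layer to a single node; since the layers are internally strongly connected, $\Gamma$ is strongly connected if and only if this quotient is. Fix any cell $m$. Since each $\cG_{a}$ is connected with $N\ge2$, the cell $m$ has at least one neighbour in $\cG_{a}$, so for every pair $(a,b)$ with $\lp\bm{Q}_{m}\rp_{ab}\neq0$ there is an edge $(i,a)\to(m,b)$ in $\Gamma$, which projects to a layer-edge $a\to b$ in the quotient. Thus the quotient contains the entire digraph of $\bm{Q}_{m}$; as $\bm{Q}_{m}$ is irreducible this digraph is strongly connected, and hence so is the quotient. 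Combining the two stages shows that $\Gamma$ is strongly connected, so $\bm{PQ}$ is irreducible. The same argument applies verbatim to $\overline{\bm{P}}$, using the two-cell reduced adjacency matrices $\overline{\bm{W}}_{i}$ in place of the $\bm{W}_{j}$.
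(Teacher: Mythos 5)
Your entrywise formula $\lp \bm{PQ} \rp_{(i,a),(k,b)} = \lp \bm{W}_{a}\rp_{ik}\lp\bm{Q}_{k}\rp_{ab}$ is correct, and recasting irreducibility as strong connectivity of the support digraph, split into an intra-layer stage and a layer-quotient stage, is a genuinely different route from the paper's (which instead computes $\lp\bm{PQ}\rp^{rN}$ via $\bm{P}^{rN}=\sum_{i}\bm{W}_{i}^{rN}\otimes\bm{D}_{i}$ and argues that this power has no zero entries). Your inter-layer stage is sound: contracting each internally strongly connected layer and noting that the quotient digraph contains the digraph of a single irreducible block $\bm{Q}_{m}$ does give strong connectivity of the quotient.

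The gap is in the intra-layer stage, and you have put your finger on it yourself: walking along $\cG_{a}$ while remaining in layer $a$ requires $\lp\bm{Q}_{k}\rp_{aa}\neq 0$, and this does not follow from the lemma's only hypothesis on the blocks, namely irreducibility (the matrix $\left(\begin{smallmatrix}0&1\\1&0\end{smallmatrix}\right)$ is irreducible with zero diagonal). Your patch via (A1) fails twice over: (A1) is not a hypothesis of this lemma, which is stated for arbitrary irreducible blocks and later applied with $\bm{Q}_{k}=\cD\bm{T}\lp\bm{z}_{k}\rp$; and even under (A1) the diagonal may vanish, since that assumption explicitly permits any sign to be replaced by zero provided $\cD\bm{T}\lp\bm{u}\rp$ stays irreducible. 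Moreover the parity obstruction you anticipate is real and not an artifact of your strategy: taking $N=r=2$ with $\bm{W}_{1}=\bm{W}_{2}=\bm{Q}_{1}=\bm{Q}_{2}=\left(\begin{smallmatrix}0&1\\1&0\end{smallmatrix}\right)$, the support of $\bm{PQ}$ consists of the edges $(i,a)\to(k,b)$ with $i\neq k$ and $a\neq b$, which decompose into the two disjoint $2$-cycles $\{(1,1),(2,2)\}$ and $\{(1,2),(2,1)\}$, so $\bm{PQ}$ is reducible. Hence no argument can close this step from the stated hypotheses alone; what you have proved is a correct but strictly stronger-hypothesis variant of the lemma (requiring, e.g., nonvanishing diagonals of the $\bm{Q}_{k}$), and the same bipartite parity issue is in fact glossed over by the paper's own claim that $\bm{W}_{i}^{rN}$ has no zero entries.
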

\begin{proof}
A graph is said to be strongly connected if there exists a path between any two vertices.  We aim to show that the graph defined by the weighted adjacency matrix $\bm{PQ}$ is strongly connected and therefore use the property that a graph is strongly connected if and only if the associated adjacency matrix is irreducible \cite{brualdi1991combinatorial}.\par

For an unweighted,  nonnegative adjacency matrix $\bm{M}$,  it can be shown that the $(i,j)$th element of $\bm{M}^{k}$ represents the number of ways to travel from vertex $v_{i}$ to vertex $v_{j}$ along exactly $k$ edges.  Therefore if $\bm{M}$ defines a connected graph of $l$ vertices,  then $\bm{M}^{l}$ contains no zero entries for all $(i,j)$,  that is,  there exists a path between any two vertices in less than,  or equal,  to $l$ steps \cite{brualdi1991combinatorial}.  The converse statement is also true.  In the case of weighted,  nonnegative adjacency matrices,  the elements $(i,j)$ of $\bm{M}^{k}$ no longer represent the number of ways to get from vertex $i$ to vertex $j$ along exactly $k$ edges, but nevertheless are non-zero if there exists a path between $v_{i}$ to vertex $v_{j}$ along $k$,  or less,  edges.\par

%don't like this paragraph
The set of vertices has cardinality $| V_{\bm{PQ}}| = rN$ owing to the total number of interconnections within the large-scale IO system (\ref{eqn:IO_system}).  Hence consider the adjacency matrix $\lp \bm{PQ} \rp^{rN}$.  From \autoref{appendB:expo_lemma} it can be shown that
\begin{equation}
\bm{P}^{rN} = \lp \sum_{i = 1}^{r} \bm{W}_{i} \otimes \bm{D}_{i}  \rp^{rN} = \sum_{i = 1}^{r} \bm{W}_{i}^{rN} \otimes \bm{D}_{i}  , 
\end{equation}
and by the above argument $\bm{W}_{i}^{rN}$ has no zero elements as each $\bm{W}_{i}$ represents a connected graph of $N$ vertices. Therefore,  $\bm{P}^{rN}$ is the interweave of $r$ completely non-zero matrices and thus w.l.o.g.  for any non-zero elements $p_{i,j}$ of $\bm{P}^{rN}$ then $p_{i\pm r,j}$ and $p_{i,j \pm r}$ are also non-zero.  Specifically there exist no two non-zero elements in $\bm{P}^{rN}$ that are more than $r$ elements apart in each row and column,  as in Example \ref{example:interweave_mats} where $r=2, \,  N = 2$.  In addition,  define $\bm{Q}_{k}^{rN} \coloneqq \tilde{\bm{Q}}_{k} $.  By assumption,  $\tilde{\bm{Q}}_{k} $ has no zero entries for all $i,j\in \{ 1,...,r \}$ by irreducibility and so $\bm{Q}^{rN} = \text{diag} \lp\tilde{\bm{Q}}_{1},...,\tilde{\bm{Q}}_{N}\rp$.  Applying the definition of the matrix product,  the elements of $\lp \bm{PQ} \rp^{rN}$ are given by
\begin{equation}
\lp \bm{P}^{rN} \bm{Q}^{rN} \rp_{ij} = \sum_{k=1}^{rN} p_{i,k} \tilde{q}_{k,j} \neq 0
\end{equation}
for all $i,j \in \{1,...,rN \}$,  as every column of $\bm{Q}^{rN}$ contains $r$ consecutive non-zero elements.  Therefore $\lp \bm{PQ} \rp^{rN}$ is a non-zero matrix which implies that the graph of $\bm{PQ}$ is strongly connected,  thus $\bm{PQ}$ is irreducible.
\end{proof}

The statement of Lemma \ref{lemma:irreducible} applies also to the reduced interconnection matrix $\overline{\bm{P}}$ as it has identical structure to the corresponding large-scale interconnection matrix $\bm{P}$ and therefore the irreducibility of the product is preserved under the quotient mapping by $\pi_{2}$.  Hence by ensuring the irreducibility of the Jacobian of the reduced IO system (\ref{eqn:IO_system}) spatially coupled by $\overline{\bm{P}}$,  then by Lemma \ref{lemma:irreducible} and (A1),  the following statement provides polarity-dependent conditions that guarantee the existence of laminar patterns in semi-regular bilayer graphs by using the strongly monotone dynamics of solution trajectories.

\begin{theorem}[Existence of laminar patterns with semi-regular graphs] \label{thm: existence_of_laminar}
Consider the IO system (\ref{eqn:IO_system}) with interconnection matrix $\bm{P}$ (\ref{eqn:interwoven_adj_mat}).  Let $\pi_{2}$ be the layer-wise simultaneously equitable partition for all bilayer connectivity graphs, $\cG_{k}$,  defined by $\bm{P}$ such that the associated reduced interconnection matrix $\overline{\bm{P}}$ (\ref{eqn:reduced_interweave}) defines the reduced IO system of representative cells from each layer.  Assuming that (A1) is satisfied and there exists $\overline{\bm{\Lambda}}_{2}$ such that the HSS instability condition (\ref{eqn:Hss_instab_cond}) holds for all $n_{1,\cL_{i}}^{[k]}w_{1}^{[k]} \leq  n_{2,\cL_{i}}^{[k]}  w_{2}^{[k]}$  ($i \in \{1,2 \} , k \in \{1,...,r \}$),  then any solutions in the neighbourhood of the HSS,  $\bm{x}^{*}$,  converge to laminar patterns in the reduced system.
\end{theorem}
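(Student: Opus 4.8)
The plan is to work entirely within the reduced two-cell system defined by the reduced interconnection matrix $\overline{\bm{P}}$ and to obtain convergence to laminar patterns by marrying strong monotonicity with the instability of the HSS. First I would show that the reduced system is strongly monotone with respect to a suitable orthant order; second, verify that the HSS is an unstable hyperbolic equilibrium whose unstable directions lie in the antisymmetric (laminar) subspace; and third, invoke a monotone-convergence (connecting-orbit) argument to conclude that generic trajectories in a neighbourhood of $\bm{x}^{*}$ are attracted to heterogeneous equilibria ordered on either side of the HSS, i.e.\ to laminar patterns.

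For the monotonicity step I would start from the Jacobian of the reduced system, whose intercellular coupling is routed through $\overline{\bm{P}}$ and the transfer derivative $\cD \bm{T}$, so that near the HSS it reads $\bm{I}_{2} \otimes \bm{A} + \lp \bm{I}_{2} \otimes \bm{B} \rp \overline{\bm{P}} \lp \bm{I}_{2} \otimes \bm{C} \rp = \bm{I}_{2} \otimes \bm{A} + \sum_{i} \overline{\bm{W}}_{i} \otimes \bm{B} \bm{D}_{i} \bm{C}$. Assumption (A1) fixes the off-diagonal signs of the coupling to the competitive structures $\cS_{1}$ or $\cS_{2}$. The key observation is that there is a diagonal orthant transformation $\bm{S} = \text{diag} \lp \pm 1, \ldots, \pm 1 \rp$ --- flipping the coordinates of one representative cell, and in the $\cS_{1}$ case additionally applying the alternating checkerboard sign pattern --- that cooperativises the coupling, rendering all off-diagonal entries of the transformed Jacobian non-negative, so that Lemma \ref{lemma:type_k} gives type K monotonicity. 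To upgrade to \emph{strong} monotonicity via Lemma \ref{lemma:strongly_monotone} I would invoke Lemma \ref{lemma:irreducible}: although $\overline{\bm{P}}$ is reducible, the product $\overline{\bm{P}} \bm{Q}$ with $\bm{Q} = \text{diag} \lp \cD \bm{T}, \cD \bm{T} \rp$ is irreducible because $\cD \bm{T}$ is irreducible by (A1), and this irreducibility of the intercellular coupling renders the full reduced Jacobian irreducible.

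For the instability step I would apply Theorem \ref{thm: HSS instability det} to the reduced system, using Lemma \ref{lemma: sim_upper_triangular} to split the relevant spectrum into the part carried by the common eigenvector $\bm{1}_{2}$ (which reproduces the uncoupled, assumed-stable kinetics) and the part carried by $\overline{\bm{\Lambda}}_{2} = \text{diag} \lp a_{1} + b_{1} - 1, \ldots, a_{r} + b_{r} - 1 \rp$, the latter being precisely the antisymmetric (laminar) mode. The polarity hypothesis $n_{1,\cL_{i}}^{[k]} w_{1}^{[k]} \leq n_{2,\cL_{i}}^{[k]} w_{2}^{[k]}$ forces $a_{k}, b_{k} \leq \tfrac{1}{2}$ through the definitions in (\ref{eqn:a_b_def}), so each entry $a_{k} + b_{k} - 1 \in (-1, 0]$; this places the laminar eigenvalues in the regime where the assumed HSS instability condition (\ref{eqn:Hss_instab_cond}) for $\overline{\bm{\Lambda}}_{2}$ produces an eigenvalue of the linearisation with positive real part confined to the laminar subspace.

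Finally, to close the argument, strong monotonicity together with the Perron--Frobenius structure of the order cone forces the dominant (unstable) eigenvalue of the HSS to be real, simple and positive with eigenvector in the interior of the cone; since the instability sits in the antisymmetric subspace, this eigenvector is laminarly ordered. Trajectories launched from $\bm{x}^{*}$ along $\pm$ this eigenvector are then strongly ordered relative to $\bm{x}^{*}$ and monotone in time, and, being confined to the bounded domain $X$, converge to equilibria $\bm{q}_{-} < \bm{x}^{*} < \bm{q}_{+}$ whose two representative cells are high-low and low-high, i.e.\ the laminar patterns; a standard monotone-convergence argument shows the order interval $[\bm{q}_{-}, \bm{q}_{+}]$ attracts a neighbourhood of the HSS off its codimension-one stable manifold. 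I expect the main obstacle to be the monotonicity step: one must verify that a \emph{single} orthant transformation simultaneously cooperativises the coupling across all $r$ signalling layers under both admissible sign structures of (A1), and that the irreducibility furnished by Lemma \ref{lemma:irreducible} is genuinely inherited by the full reduced Jacobian so that Lemma \ref{lemma:strongly_monotone} applies; securing that the unstable mode is laminar rather than a spurious homogeneous instability is exactly what the polarity condition $a_{k} + b_{k} \leq 1$ guarantees.
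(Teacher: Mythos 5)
Your overall architecture (cooperativising orthant transformation $\Rightarrow$ type K $\Rightarrow$ strong monotonicity via irreducibility $\Rightarrow$ instability confined to the $\overline{\bm{\Lambda}}_{2}$ mode $\Rightarrow$ bounded monotone convergence to ordered heterogeneous equilibria) is the same as the paper's, and your instability step, your use of Lemma \ref{lemma: sim_upper_triangular} to isolate the laminar mode, your observation that the polarity hypothesis forces $a_{k}+b_{k}-1 \in (-1,0]$, and your appeal to Lemma \ref{lemma:irreducible} with $\bm{Q} = \text{diag}\lp \cD\bm{T}, \cD\bm{T}\rp$ all match the paper's proof. However, there is a genuine gap in your monotonicity step: you propose to cooperativise the \emph{full} reduced Jacobian $\bm{I}_{2}\otimes\bm{A} + \sum_{i}\overline{\bm{W}}_{i}\otimes\bm{B}\bm{D}_{i}\bm{C}$ and assert that ``(A1) fixes the off-diagonal signs of the coupling.'' It does not. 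Assumption (A1) constrains only the sign structure of the input--output transfer derivative $\cD\bm{T} = -\bm{C}\bm{A}^{-1}\bm{B}$; it says nothing about the off-diagonal entries of the intracellular matrix $\bm{A}$ or of $\bm{B}\bm{D}_{i}\bm{C}$. In the paper's own Example \ref{example:laminar_existence} the off-diagonal sign pattern of $\bm{A}$ contains a $+$ at position $(2,1)$ and a $-$ at position $(1,2)$, so no diagonal sign matrix $\bm{S}$ can make $\bm{S}\bm{A}\bm{S}$ Metzler, and your single orthant transformation cannot render the full reduced Jacobian cooperative. The argument as you state it would therefore fail on the paper's worked example.

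The paper's remedy, which your plan needs, is to run the entire monotonicity argument on the \emph{auxiliary} input-space system
\begin{equation*}
\dot{\bm{z}} = -\bm{z} + \overline{\bm{P}}\,[\bm{T}\lp\bm{z}_{1}\rp, \bm{T}\lp\bm{z}_{2}\rp]^{T},
\end{equation*}
of dimension $2r$, whose fixed points correspond to those of the reduced IO system by Lemma \ref{lemma:aux_ss} and whose Jacobian $-\bm{I}_{2r} + \overline{\bm{P}}\lp\bm{I}_{2}\otimes\cD\bm{T}\rp$ involves \emph{only} $\cD\bm{T}$ and is therefore fully sign-controlled by (A1). On that system your two sign flips (the within-cell alternating reflection $\widetilde{\bm{M}}$ converting $\cS_{1}$ to $\cS_{2}$, followed by the across-cell reflection $\widetilde{\bm{R}} = \text{diag}\lp-1,1\rp$) work exactly as you intend, and the off-diagonal row sums come out as $\lp 2a_{k}-1\rp$ and $\lp 2b_{k}-1\rp$ times non-positive quantities, giving type K under the stated polarity condition. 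With that substitution the remainder of your outline goes through essentially as in the paper.
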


\begin{proof}
Following from Lemma \ref{eqn:lemma_aux_eqn} we consider the auxiliary dynamic system defined by the transfer kinetics for the reduced IO system

\begin{equation}\label{eqn:aux_reduced}
\begin{bmatrix}
\dot{\bm{z}}_{1} \\
\dot{\bm{z}}_{2} \\
\end{bmatrix} = 
-\begin{bmatrix}
\bm{z}_{1} \\
\bm{z}_{2} \\
\end{bmatrix} + \overline{\bm{P}} \begin{bmatrix}
\bm{T} \lp \bm{z}_{1} \rp \\
\bm{T} \lp \bm{z}_{2} \rp \\
\end{bmatrix}   \coloneqq \bm{F} \lp \bm{z} \rp,
\end{equation}
as this represents the behaviour of reduced IO system using only the spatially dependent components.  Note that the fixed points of the auxiliary system (\ref{eqn:aux_reduced}) are those of the IO system (\ref{eqn:IO_system}).  Namely,  the auxiliary system (\ref{eqn:aux_reduced}) has HSS $\bm{z}^{*} = \bm{1}_{2} \otimes \bm{u}_{0}$ for the cell-wise input steady state $\bm{u}_{0}$ associated with $\bm{x}^{*}$.  Linearising the auxiliary system about the HSS yields the following Jacobian 
\begin{equation}\label{eqn:jaco_reduced}
\frac{ \pd \bm{F}} { \pd \bm{z}} \lp \bm{z}^{*} \rp = -\bm{I}_{2r} + \overline{\bm{P}} \lp \bm{I}_{2} \otimes \cD \bm{T} \lp \bm{z^{*}} \rp \rp.
\end{equation}
First,  we show that sign structures,  $\cS_{1}$ and $\cS_{2}$ of (A1),  are equivalent up to linear transformation on the Jacobian (\ref{eqn:jaco_reduced}),  thereby ensuring the competitive solution dynamics of the auxiliary system (\ref{eqn:aux_reduced}).  Following that,  we then use a competitive to cooperative bijective transformation to show that the auxiliary system is strongly monotone.  Critically,  the boundedness in combination with strongly monotone kinetics of the transfer function ensures the convergence of heterogeneous solutions in the auxiliary system (\ref{eqn:aux_reduced}) and thus the reduced IO system by Lemma \ref{eqn:lemma_aux_eqn}.  A sketch of the following proof is given in Figure \ref{fig:laminar_proof}. \par

Denote the reflection transformation $\bm{M} = \bm{I}_{2} \otimes \widetilde{\bm{M}}$ where $\widetilde{\bm{M}} = \text{diag} \lp -1,  1  \rp$.  Note that $\widetilde{\bm{M}}^{-1} = \widetilde{\bm{M}} $ and therefore $\bm{M}^{-1} = \bm{M}$.  Introducing the coordinate transformation $\bm{w} = \bm{M} \bm{z}$ which converts between Jacobians with sign structures $\cS_{1}$ and $\cS_{2}$.  Explicitly,  consider the auxiliary system (\ref{eqn:aux_reduced}) with $ \text{Sgn} \lp   \cD \bm{T} \lp \cdot \rp \rp = \cS_{2}$,  then the Jacobian (\ref{eqn:jaco_reduced}) with respect to $\bm{w}$ yields 
\begin{align}\label{eqn:reduced_sign_swap_jac}
\bm{M} \lp   \frac{ \pd \bm{F}} { \pd \bm{z}} \lp \bm{M}\bm{w} \rp    \rp \bm{M} &= \lp \bm{I}_{2} \otimes \widetilde{\bm{M}} \rp \lp  -\bm{I}_{2r} + \overline{\bm{P}} \lp \bm{I}_{2} \otimes \cD \bm{T} \lp \bm{M} \bm{w} \rp \rp  \rp \lp \bm{I}_{2} \otimes \widetilde{\bm{M}} \rp, \nonumber \\
 &= -\bm{I}_{2r} + \lp \bm{I}_{2} \otimes \widetilde{\bm{M}} \rp \lp \sum_{i = 1}^{r} \overline{\bm{W}}_{i} \otimes \bm{D}_{i}  \rp  \lp \bm{I}_{2} \otimes \cD \bm{T} \lp \bm{Mw} \rp \rp  \lp  \bm{I}_{2} \otimes \widetilde{\bm{M}}    \rp ,\nonumber \\
  &= -\bm{I}_{2r} + \lp \sum_{i = 1}^{r} \overline{\bm{W}}_{i} \otimes \bm{D}_{i}  \rp \lp \bm{I}_{2} \otimes \widetilde{\bm{M}} \rp   \lp \bm{I}_{2} \otimes \cD \bm{T} \lp \bm{Mw} \rp \rp  \lp  \bm{I}_{2} \otimes \widetilde{\bm{M}}    \rp ,\nonumber \\
  &=  -\bm{I}_{2r} + \lp \sum_{i = 1}^{r} \overline{\bm{W}}_{i} \otimes \bm{D}_{i}  \rp \lp \bm{I}_{2} \otimes \widetilde{\bm{M}} \cD  \bm{T}\lp \bm{Mw} \rp   \widetilde{\bm{M}}  \rp,
\end{align}
where the third and fourth inequality follows from the commutativity of diagonal matrices and the mixed multiplication property of the Kronecker product.  The transformed Jacobian (\ref{eqn:reduced_sign_swap_jac}) is a non-positive matrix as 
\begin{equation}
 \widetilde{\bm{M}} \cD  \bm{T}\lp \bm{Mw} \rp   \widetilde{\bm{M}} = \begin{bmatrix}
 -1 & 0 \\ 
 0 & 1 
 \end{bmatrix}
 \begin{bmatrix}
 -t_{11} & t_{12} \\
 t_{21} & -t_{22}
 \end{bmatrix}
 \begin{bmatrix}
 -1 & 0 \\ 
 0 & 1 
 \end{bmatrix} =  
-  \begin{bmatrix}
 t_{11} & t_{12} \\
 t_{21} & t_{22}
 \end{bmatrix},
\end{equation}
for $t_{ij} >0$ are the signless elements of $ \cD \bm{T}\lp \bm{Mw} \rp $,  i.e.  $ \widetilde{\bm{M}} \cD  \bm{T}\lp \bm{Mw} \rp   \widetilde{\bm{M}}$ has sign structure $\cS_{2}$.  Therefore,  we continue by considering the transfer function with $ \text{Sgn} \lp   \cD \bm{T} \lp \cdot \rp \rp = \cS_{2}$.  \par

The Jacobian (\ref{eqn:jaco_reduced}) with  $ \text{Sgn} \lp   \cD \bm{T} \lp \cdot \rp \rp = \cS_{2}$ is a non-positive matrix as all element of $\overline{\bm{P}}$ are non-negative.  From Lemma \ref{lemma: sim_upper_triangular} the polarity dependent eigenvalues $\lambda_{i,2}$ of $\overline{\bm{W}}_{i}$ have eigenvectors, $\bm{v}_{i,2}$,  with sign structure $\text{Sgn} \lp \bm{v}_{i,2}  \rp = [-,+]^{T}$.  Therefore,  motivated by polarity-driven patterning and the requirement of the positivity of the dominant instability mode for monotone kinetics \cite{smith2008monotone},   we construct a transformation,  $\bm{R}$,  to ensure that any polarity driven instability satisfies the monotonicity criteria,  that is,  monotone with respect to alternating domains.  Then consider the transformation $\bm{R} =  \widetilde{\bm{R}} \otimes \bm{I}_{r}$ where $\widetilde{\bm{R}} = \text{diag} \lp - 1, 1 \rp$.  Noting again that $\bm{R}^{-1} = \bm{R}$ as  $ \widetilde{\bm{R}}^{-1} =  \widetilde{\bm{R}}$.  By similar calculations as above,  it can be shown that by the coordinate transformation $\bm{w} = \bm{Rz}$ the Jacobian (\ref{eqn:jaco_reduced}) has the form
\begin{equation}\label{eqn:trans_aux_jac_pattern}
\bm{R} \lp  \frac{\pd \bm{F}}{ \pd \bm{z} }  \lp \bm{Rw} \rp \rp \bm{R} = -\bm{I}_{2r} + \sum_{i = 1}^{r}   \widetilde{\bm{R}} \overline{\bm{W}}_{i} \widetilde{\bm{R}} \otimes \bm{D}_{i} \cD \bm{T} \lp \bm{Rw} \rp,
\end{equation}
where the quotient adjacency matrix is transformed to the following form
\begin{equation}
 \widetilde{\bm{R}} \overline{\bm{W}}_{i} \widetilde{\bm{R}}  = \begin{bmatrix}
 a_{i}  & -\lp 1- a_{i} \rp \\ 
 - \lp 1 - b_{i} \rp & b_{i}
 \end{bmatrix}.
\end{equation}
Therefore,  let $\tau \lp i \rp = \lp i-1 \rp \bmod r + 1$ then the row-sum of the transformed auxiliary Jacobian (\ref{eqn:trans_aux_jac_pattern}) can be expressed as
%Incides have been checked
\begin{equation}
\sum_{j \neq i}  \lp \bm{R} \lp  \frac{\pd \bm{F}}{ \pd \bm{z} }  \lp \bm{Rw}  \rp \bm{R} \rp \rp_{ij} = 
\begin{dcases} 
\lp 2a_{\tau \lp i\rp} - 1 \rp \sum_{ j = 1,i \neq j}^{r} \lp \bm{D}_{\tau \lp i\rp} \cD \bm{T} \lp \bm{Rw} \rp\rp  _{ij} &  \text{ }  1 \leq i  \leq r,\\
\lp 2b_{\tau \lp i\rp} - 1 \rp \sum_{ j = 1,i \neq j}^{r} \lp \bm{D}_{\tau \lp i\rp} \cD \bm{T} \lp \bm{Rw} \rp\rp  _{ij} &  \text{ } r+1 \leq  i \leq 2r.
\end{dcases}
\end{equation}
Hence by the assumption $n_{1,\cL_{i}}^{[k]}w_{1}^{[k]} \leq  n_{2,\cL_{i}}^{[k]}  w_{2}^{[k]}$  ($i \in \{1,2 \} , k \in \{1,...,r \}$),  we have that $2a_{k} - 1 \leq 0$ and $2b_{k} - 1 \leq 0$ by direct substitution into equation (\ref{eqn:a_b_def}).  Critically,  as $\cD \bm{T} \lp \bm{Rw} \rp$ is a negative matrix,  we have that 
\begin{equation}
\sum_{j \neq i}  \lp \bm{R} \lp  \frac{\pd \bm{F}}{ \pd \bm{z} }  \lp \bm{Rw}  \rp \bm{R} \rp \rp_{ij} \geq 0 
\end{equation}
for all $i \in \{1,...,2r \}$,  thus satisfying the type K condition in Lemma \ref{lemma:type_k}.  Furthermore,  by Lemma \ref{lemma:irreducible},  the transformed auxiliary Jacobian (\ref{eqn:trans_aux_jac_pattern}) is irreducible and therefore the auxiliary dynamical system (\ref{eqn:aux_reduced}) is strongly monotone (cooperative) with respect to the laminar pattern transformation $\bm{R}$.    \par

The cooperative auxiliary dynamical system (\ref{eqn:aux_reduced}) is monotone with respect to the standard domain $\R_{\geq 0}^{2r}$ and has a positive eigenvector $\bm{v} >0$ associated with the polarity driven instability $\overline{\bm{\Lambda}}_{2}$ of the transformed HSS $\bm{R}\bm{z}^{*}$ by the Perron-Frobenius Theorem \cite{Chang2008}.  Consequently,  for small $\epsilon $,  any solution starting at $\bm{R}\bm{z} = \bm{R} \bm{z}^{*} + \epsilon\bm{v}$ must have positive derivative and increase in the transformed trajectory domain $\R_{\geq 0}^{2r}$ \cite{smith2008monotone}.  Critically,  if the solutions of the cooperative auxiliary dynamical system (\ref{eqn:aux_reduced}) are bounded,  then the strongly monotone property ensures the convergence to another steady state,  $\bm{R} \bm{z}^{**} \neq \bm{R} \bm{z}^{*}$.

The transfer function $\bm{T} \lp \cdot  \rp$  is bounded and so $\exists b >0$ such that $|| \overline{\bm{P}} [\bm{T} \lp \bm{z}_{1} \rp ,  \bm{T} \lp \bm{z}_{2} \rp ]^{T}||_{2} < b $ for all $\bm{z}_{i}$.  Thus,  as the cooperative auxiliary dynamical system (\ref{eqn:aux_reduced}) is monotone with respect to $\R_{\geq 0}^{2r}$,  we have that the sets centred about the HSS $\mathcal{V}_{\pm} = \bm{R} \bm{z}^{*} \pm \lp \R_{\geq 0}^{2r} \cap [0,b]^{2r} \rp$ are forward invariant,  i.e.   $\bm{\phi}_{t} \lp \bm{R}\bm{z} \rp \in \mathcal{V}_{\pm}$ for all $t\in [0, \infty)$.  Therefore all solutions are bounded within a compact domain and thus converge to $\bm{R} \bm{z}^{**} \neq \bm{R} \bm{z}^{*}$ by the Cooperative Irreducible Convergence Theorem (Theorem 4.3.3 in \cite{smith2008monotone}).  Subsequently,  the corresponding non-transformed system (\ref{eqn:aux_reduced}) must have each vertices with solutions in $\mathcal{V}_{+} $ and $\mathcal{V}_{-}$,  respectively,  ensuring contrasting cell-wise solutions.  Finally,  as any steady state solution to the auxiliary dynamical system (\ref{eqn:aux_reduced}) is a steady state of the associated reduced IO system (\ref{eqn:IO_system}),  by Lemma  \ref{eqn:lemma_aux_eqn} the reduced IO system (\ref{eqn:IO_system}) converges to laminar patterns.
\end{proof}

\begin{figure}[h!]
\includegraphics[width=0.8\textwidth]{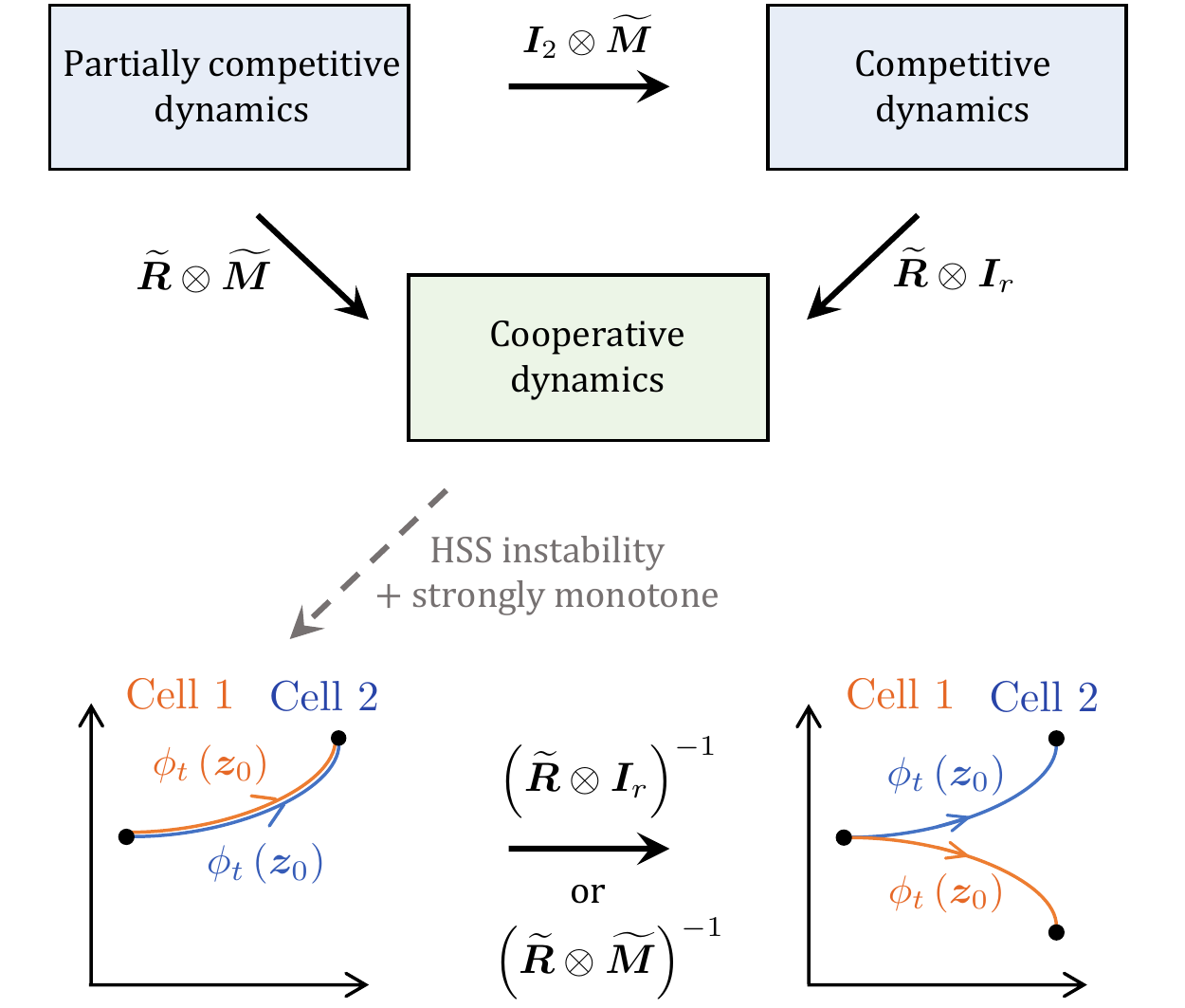}
\caption{A sketch of the proof of Theorem \ref{thm: existence_of_laminar} for system transformations where $\bm{I}_{r}$ is the identity matrix,  $\widetilde{\bm{M}}$ is the competitive sign structure transformation,  $\widetilde{\bm{R}}$ is the competitive to cooperative transformation and  $\bm{\phi}_{t} \lp \bm{z}_{0} \rp$ is a solution trajectory with initial condition $\bm{z}_{0}$. } \label{fig:laminar_proof}
\end{figure}

From Theorem \ref{thm: existence_of_laminar} we can conclude that the existence of a polarity-driven instability of the HSS implies the existence of heterogeneous steady states within the quotient system. This follows as solution trajectories diverge when transforming between competitive to cooperative systems as highlighted in Figure \ref{fig:laminar_proof}.  Moreover,  as the competitive dynamics of the reduced IO system (\ref{eqn:IO_system}) are isomorphic to cooperative dynamics,  all periodic solutions are unstable \cite{smith1986periodic},  implying the convergence to contrasting cell states.  The following example demonstrates how Theorem \ref{thm: existence_of_laminar} can be applied to prove the existence of laminar patterns in large-scale IO systems.

\begin{example}\label{example:laminar_existence}
Consider the DIDO system with two spatially-dependent components describing lateral-inhibition with a diffusive crosstalk as represented in Figure \ref{fig:lamina_ex_1},

\begin{align} \label{example_io_system_1}
\dot{x}_{i,1} &= g_{1} \lp u_{i,1} \rp  \cdot g_{2} \lp x_{i,2} \rp \cdot f_{1} \lp u_{i,2}  \rp - x_{i,1}, \\
\dot{x}_{i,2} &=   f_{2} \lp x_{i,1}  \rp - x_{i,2}, \\
\dot{x}_{i,3} &=   g_{3} \lp x_{i,1}  \rp -x_{i,3},\\ 
y_{i,1} &= x_{i,2},\\
y_{i,2} &= x_{i,3},\label{example_io_system_2}
\end{align}
for each cell $1 \leq i \leq 60$. The functions $f_{j} $ and $g_{j}$,  $j = 1,2,3$,  are positive,  bounded,  and increasing and decreasing functions,  respectively,  of the form,
\begin{equation}
f_{j} \lp x \rp = \frac{x^{k_{j}} }{ \alpha_{j} +x^{k_{j}} } \quad \text{ and } \quad  g_{j} \lp x \rp = \frac{1 }{ 1+  \beta_{j} x^{h_{j}}}
\end{equation}
where $\alpha_{j},\beta_{j},  k_{j},  h_{j} >0$.  Let $u_{i,1}$ and $u_{i,2}$ be defined be short-range diffusion and contact-based bilayer connectivity graphs $\cG_{1}$ and $\cG_{2}$,  respectively as in Figure \ref{fig:lamina_ex_1}.  Explicitly,  we have that outputs are converted to inputs via the global interconnection matrix such that $\bm{u} = \lp \bm{W}_{1}\otimes \bm{D}_{1} + \bm{W}_{2}\otimes \bm{D}_{2} \rp \bm{y}$ for $\bm{W}_{1}, \bm{W}_{2} \in \bm{\cW}$.  Here,  we focus on the associated reduced IO system (\ref{example_io_system_1}-\ref{example_io_system_2}) which is defined by the simultaneously equitable partition $\pi_{2}$.  Namely,  in the reduced IO system,  outputs are converted to inputs by $\bm{u} = \lp \overline{\bm{W}}_{1}\otimes \bm{D}_{1} +\overline{ \bm{W}}_{2}\otimes \bm{D}_{2} \rp \bm{y}$ where 
\begin{equation}
\overline{\bm{W}}_{1} = \begin{bmatrix}
\frac{2w_{1}^{[1]}}{ 2w_{1}^{[1]} + 4w_{2}^{[1]}  }  & \frac{4w_{2}^{[1]}}{ 2w_{1}^{[1]} + 4w_{2}^{[1]}  }  \\
 \frac{4w_{2}^{[1]}}{ 2w_{1}^{[1]} + 4w_{2}^{[1]}  } &\frac{2w_{1}^{[1]}}{ 2w_{1}^{[1]} + 4w_{2}^{[1]}  } 
\end{bmatrix} \quad \text{ and } \quad \overline{\bm{W}}_{2} = \begin{bmatrix}
\frac{2w_{1}^{[2]}}{ 2w_{1}^{[2]} + 2w_{2}^{[2]}  }  & \frac{2w_{2}^{[2]}}{ 2w_{1}^{[1]} +2 w_{2}^{[2]}  }  \\
 \frac{2w_{2}^{[2]}}{ 2w_{1}^{[2]} + 2w_{2}^{[2]}  } &\frac{2w_{1}^{[2]}}{ 2w_{1}^{[2]} + 2w_{2}^{[2]}  } 
\end{bmatrix}
\end{equation}
such that $n_{1,\cL_{1}}^{[1]}  = n_{1,\cL_{2}}^{[1]}  = n_{1,\cL_{1}}^{[2]}  = n_{1,\cL_{2}}^{[2]}  = 2$,  $n_{2,\cL_{1}}^{[1]} = n_{2,\cL_{2}}^{[1]} = 4$
and $n_{2,\cL_{1}}^{[2]} = n_{2,\cL_{2}}^{[2]} = 2$.  We seek to show the existence of polarity driven laminar patterns using the quotient graphs and so we first require the HSS of the IO system (\ref{example_io_system_1}-\ref{example_io_system_2}),  then we derive the derivative of the transfer function $\cD \bm{T} \lp \bm{u}_{i} \rp $,  highlighting that (A1) is satisfied.  Applying Theorem \ref{thm: existence_of_laminar},  we generate polarity regimes for the existence of patterning. \par

\begin{figure}[h!]
\includegraphics[width=0.9\textwidth]{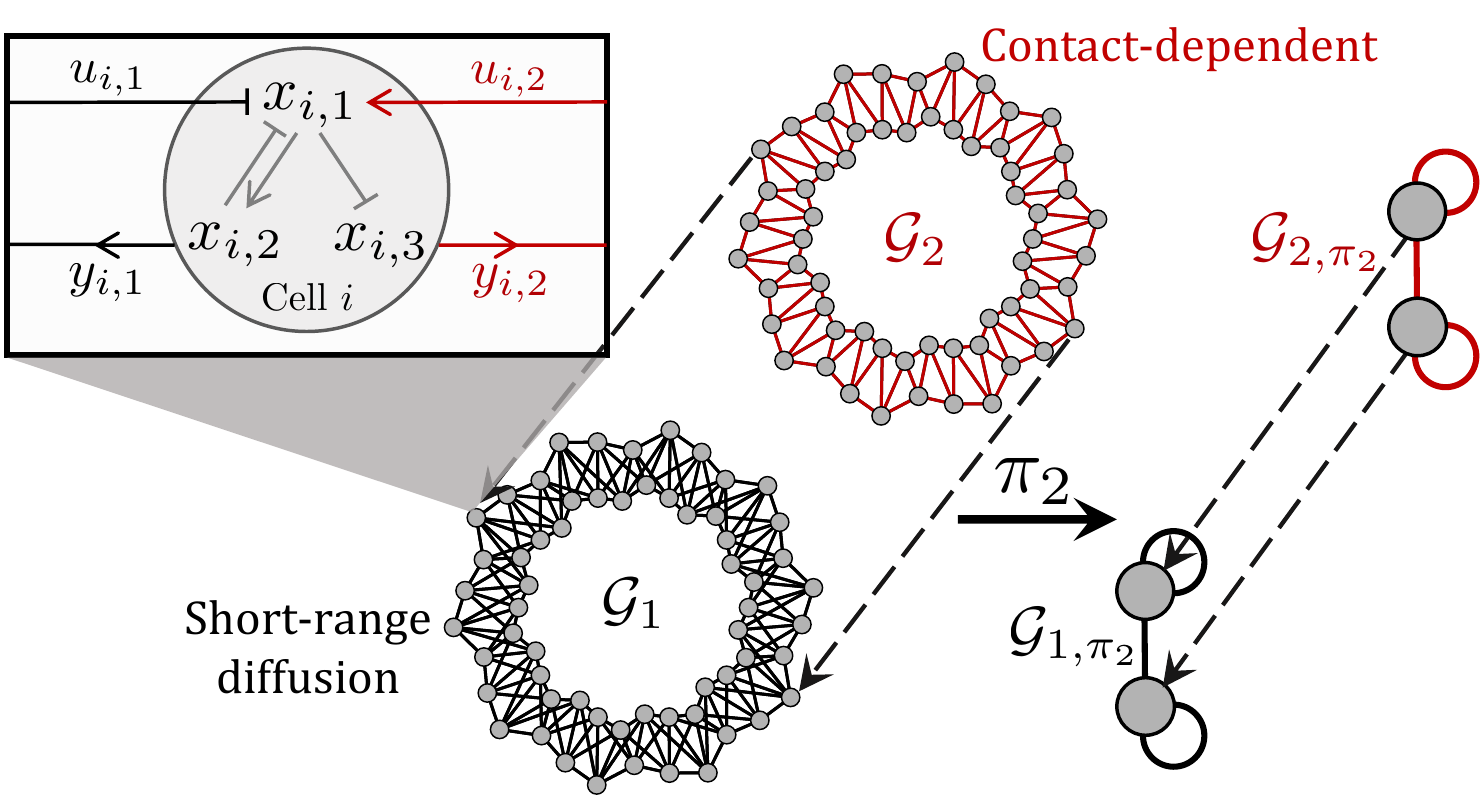}
\caption{A schematic of the IO system considered in Example \ref{example:laminar_existence}.  } \label{fig:lamina_ex_1}
\end{figure}

The HSS of the IO system (\ref{example_io_system_1} - \ref{example_io_system_2}) can be determined by solving
\begin{equation}\label{eqn:example_HSS}
g_{1} \lp f_{2} \lp x_{1}^{*} \rp  \rp \cdot g_{2} \lp f_{2} \lp  x_{1}^{*} \rp \rp \cdot f_{1} \lp g_{3} \lp     x_{1}^{*} \rp \rp - x_{1}^{*} = 0
\end{equation}
for $x_{1}^{*}$ by setting $u_{i,1} = x_{i,2}$ and $u_{i,2} = x_{i,3}$,  conforming to homogeneous input and outputs of the tissue.  Furthermore, the HSS defined by solving equation (\ref{eqn:example_HSS}) is always stable in the absence of interconnections.  This can be shown by considering the linearisation of the intracellular kinetics
\begin{equation}
\bm{A} \coloneqq \frac{  \pd \bm{f} }{ \pd \bm{x}_{i} } = \begin{bmatrix}
- 1  & f_{1} g_{1} g_{2} ^{\prime} & 0 \\
f_{2}^{\prime} & -1 & 0 \\
g_{3}^{\prime} & 0 & -1
\end{bmatrix}.
\end{equation}
As $\det \lp  \bm{A} \rp =f_{1}g_{1} f_{2}^{\prime} g_{2}^{\prime} - 1 < 0$ always holds by the monotonicity of the functions $f_{j}$ and $g_{j}$,  then the HSS defined by solving equation (\ref{eqn:example_HSS}) is unique by Lemma \ref{lemma:exist_and_uni_HSS}.  In addition,  $\bm{A}$ has eigenvalues
\begin{equation}
 \mu_{1} = -1,  \quad  \mu_{2} = -1 + \sqrt{f_{1} g_{1} f_{2}^{\prime} g_{2}^{\prime}}  \quad \text{ and } \quad \mu_{3} = -1 - \sqrt{f_{1} g_{1} f_{2}^{\prime} g_{2}^{\prime}}
\end{equation}
 and so as $ \mu_{1} , \Re \lp \mu_{2} \rp, \Re \lp \mu_{3} \rp<0 $ we have that $\bm{A}$ is stable.  Thus any instability of the HSS will be induced by the interconnection of cells in the tissue. \par

The derivative of the transfer function can be determined by linearisation of the IO kinetics (\ref{example_io_system_1} - \ref{example_io_system_2}) as demonstrated in \cite{Arcak2013} such that $\cD \bm{T}\lp  \bm{u}_{i} \rp = -\bm{C} \bm{A}^{-1} \bm{B}$ where $\bm{B}$ and $\bm{C}$ are the linearised inputs and outputs respectively as in Lemma \ref{lemma:large_linear}.  For the IO system (\ref{example_io_system_1} - \ref{example_io_system_2}),  the derivative of the transfer function has the form

\begin{align}
\cD \bm{T}  \lp  \bm{u}_{i} \rp  &= - \det \lp  \bm{A} \rp^{-1} \begin{bmatrix}
0 & 1 & 0 \\
0 & 0 & 1
\end{bmatrix}
\begin{bmatrix}
1 &  f_{1} g_{1} g_{2}^{\prime} & 0 \\
f_{2}^{\prime} & 1 & 0 \\
g_{3}^{\prime} & f_{1} g_{1} g_{2}^{\prime} g_{3}^{\prime} & 1 - f_{1}g_{1} g_{2}^{\prime} 
\end{bmatrix}
\begin{bmatrix}
f_{1}g_{2}g_{1}^{\prime} & g_{1}g_{2} f_{1}^{\prime} \\
0 & 0\\
0 & 0 \\
\end{bmatrix} \nonumber \\
&=   - \det \lp  \bm{A} \rp^{-1}  \begin{bmatrix}
f_{1} g_{2} f_{2}^{\prime} g_{1}^{\prime}  & g_{1} g_{2} f_{1}^{\prime}  f_{2} ^{\prime}  \\
f_{1} g_{2} g_{1}^{\prime} g_{3} ^{\prime}  &  g_{1} g_{2} f_{1} ^{\prime} g_{3} ^{\prime}
\end{bmatrix},
\end{align}
where each of the functions $f_{j}$ and $g_{j}$ are evaluated using the corresponding arguments for the given input state $\bm{u}_{i}$.  The multiplication of bounded functions are bounded \cite{liesen2011lineare} and subsequently $\cD \bm{T} \lp \bm{u}_{i} \rp$ is element-wise bounded as $f_{j},g_{j},f_{j}^{\prime}$ and $g_{j}^{\prime}$ are bounded.  In addition,  from the monotonicity of $f_{j}$ and $g_{j}$ we have that 
\begin{equation}
\text{Sgn} \lp \cD \bm{T} \lp \bm{u}_{i} \rp \rp = \begin{bmatrix}
- & + \\
+ & -
\end{bmatrix} = \cS_{1}
\end{equation}
and so the IO system (\ref{example_io_system_1} - \ref{example_io_system_2}) satisfies (A1). Therefore by Theorem \ref{thm: existence_of_laminar} we have that the IO system (\ref{example_io_system_1} - \ref{example_io_system_2}) spatially coupled using the quotient graphs $\cG_{1,\pi_{2}}$ and $\cG_{2,\pi_{2}}$,  the instability of the HSS in addition to the monotone polarity conditions $w_{1}^{[1]} \leq 2w_{2}^{[1]} $ and  $w_{1}^{[2]} \leq w_{2}^{[2]} $,  produce contrasting cell-wise states. \par

By Corollary \ref{cor:instab_cond} we apply the DIDO instability inequality (\ref{DIDO bif}) to the IO system (\ref{example_io_system_1} - \ref{example_io_system_2}).  As $\det \lp \cD \bm{T} \lp  \bm{u}_{i} \rp  \rp = 0$,  the DIDO instability inequality (\ref{DIDO bif}) reduces to $1 < \Tr \lp \overline{\bm{\Lambda}}_{2} \cD \bm{T} \lp  \bm{u}_{i} \rp \rp$,  namely the HSS is unstable only if
\begin{equation}\label{eqn:hss_instab_example}
1 < - \det \lp \bm{A} \rp^{-1} \lp  \lp \frac{w_{1}^{[1]}  - 2w_{2}^{[1]}}{  w_{1}^{[1]}  + 2w_{2}^{[1]} }  \rp f_{1} g_{2} g_{1}^{\prime} f_{2}^{\prime}  +   \lp \frac{w_{1}^{[2]}  - w_{2}^{[2]}}{  w_{1}^{[2]}  + w_{2}^{[2]} }  \rp   g_{1} g_{2} f_{1}^{\prime} g_{3}^{\prime} \rp
\end{equation}
for the reduced IO system (\ref{example_io_system_1} - \ref{example_io_system_2}).  The monotone polarity conditions $w_{1}^{[1]} \leq 2w_{2}^{[1]} $ and  $w_{1}^{[2]} \leq w_{2}^{[2]} $ of Theorem \ref{thm: HSS instability det} confirm that each of the reduced connectivity matrices must have negative eigenvalues to produce the instability of the HSS as $f_{1} g_{2} g_{1}^{\prime} f_{2}^{\prime} < 0$ and $ g_{1} g_{2} f_{1}^{\prime} g_{3}^{\prime} < 0$ by the monotone properties of the functions $f_{j}$ and $g_{j}$.  Critically,  the HSS instability inequality (\ref{eqn:hss_instab_example}) highlights that as the layer-wise activator/receptor polarity increases,  i.e.  $w_{1}^{[i]} \ll w_{2}^{[i]}$,  the potential to induce laminar patterns also increases in the quotient system.  Then by the spectral retention property of the equitable partition $\pi_{2}$,   we have that laminar patterns must exist in the pattern space of the associated large-scale system.\par

To illustrate the application Theorem \ref{thm: existence_of_laminar} to the IO system (\ref{example_io_system_1} - \ref{example_io_system_2}) numerical verification of the polarity parameter regime for laminar pattern existence determined by inequality (\ref{eqn:hss_instab_example}) is given in Figure \ref{fig:lamina_ex_2}. 

\begin{figure}[h!]
\includegraphics[width=\textwidth]{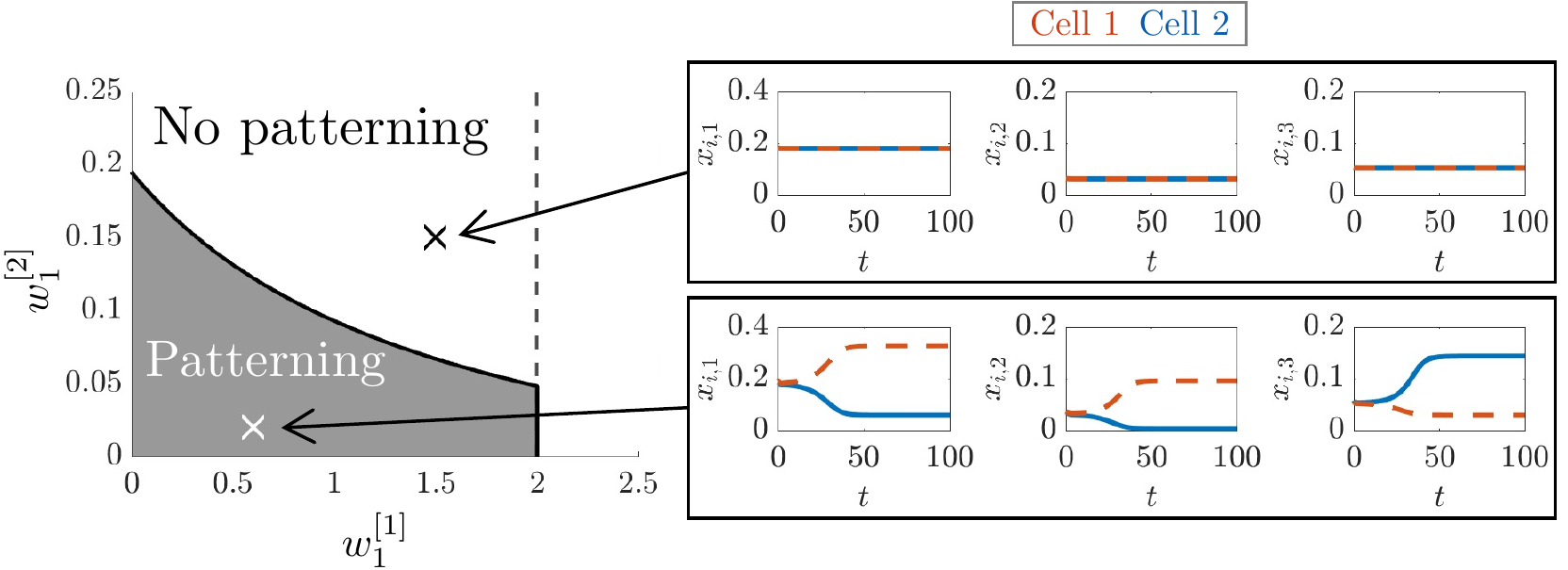}
\caption{Polarity parameter regimes for the existence of laminar patterns in the IO system (\ref{example_io_system_1}-\ref{example_io_system_2}).  For fixed $w_{2}^{[1]} = w_{2}^{[2]} = 1$,   inequality (\ref{eqn:hss_instab_example}) in addition to the monotone polarity conditions $w_{1}^{[1]} \leq w_{2}^{[1]} $ and  $w_{1}^{[2]} \leq 2w_{2}^{[2]} $ to define a regions in $\lp w_{1}^{[1]}, w_{1}^{[2]} \rp$-space for the existence of laminar patterns.  The dashed line in the $\lp w_{1}^{[1]}, w_{1}^{[2]} \rp$-space corresponds to the monotone condition $w_{1}^{[1]} \leq 2w_{2}^{[1]} $.  Example simulations are given for polarity parameter values inside the pattern region,  $\lp 0.6 , 0.02 \rp$,  and outside the pattern region $\lp 1.5 , 0.15 \rp$.  Initial conditions were given as small random perturbations about the HSS,  $\bm{x}^{*} = [0.18, 0.03,  0.05 ]^{T}$.    IO system (\ref{example_io_system_1}-\ref{example_io_system_2}) parameter values and  details on simulations are given in Appendix \ref{sec:ODE_methods}.  } \label{fig:lamina_ex_2}
\end{figure}

\end{example}

As demonstrated in Example \ref{example:laminar_existence},  the method of pattern templating for contrasting solutions between cells in opposing layers can be used to show the existence of layer-wise differing steady states via polarity-driven instabilities.   However,  the associated large-scale systems may have many locally stable steady states that produce the pattern space of the IO system which could have been lost during the dimension reducing transformation by the partition, $\pi_{2}$ \cite{RufinoFerreira2013}.  Therefore,  in the following section, we investigate the spectral properties of the bilayer connectivity graphs to ensure that the laminar patterns produced by Theorem \ref{thm: existence_of_laminar} are indeed globally dominant.

% discussion here about how spec reduced is in full and therefore any patterns we generate in the reduced IO system much lie in the pattern space of the corresponding large-scale IO system,  but when are these dominant...

%Figure here to describe the proof%

%define the quotient systems
%Sim diag on the quotient systems
%complete determination of the orthonormal eigenbasis
%however,  the IO system (2.1) defined by the connectivity matrix P is actually reducible,  thus in order to recover the irreducibility we must 
% irreducibility of the product of DT and Poverline 
% show that the eigenvalues found in the quotient systems lie in the spectrum of the large-scale systems.
% instability of HSS implies the existence of laminar patterns in the pattern space
% remark instability condition suggests that polarity is not required for monotone dynamics
%example of monotone convergence for bilayer with two mechanisms

% note that all eigs of the quotient system are in the full system -> if the quotient becomes dominant then 
\subsection{Spectral links between quotient and large-scale bilayer connectivity graphs} \label{sec:spec_links} $\,$ \\
For linearised dynamical systems near steady state,  the local solution trajectories are a linear combination of the associated eigenvectors scaled by the corresponding exponent of the eigenvalues \cite{murray2002mathematicalbiology}.  Thus, in the instance of steady-state instability, all trajectories close to the steady-state will locally tend in the direction of the eigenvector associated with the largest real-part eigenvalue.  Critically,  to ensure the monotone convergence of laminar patterns in the reduced IO systems in Theorem \ref{thm: existence_of_laminar},  we transformed the polarity-dependent eigenvector to be directed in the positive orthant,  conforming to the behaviour of cooperative dynamics.  Thus,  motivated by this positive direction transformation,  we seek to understand when the eigenvalue associated with laminar pattern formation dominates the large-scale spectra to ensure perturbed trajectories from the HSS to be preferably pointed in the direction to achieve layer-wise contrasting states in the large-scale IO systems.   \par

% In this section we show that polarity-dependent eigenvectors associated with laminar patterns as constructed by the partition,  $\pi_{2}$, can become the dominant eigenvectors in the large-scale IO system (\ref{eqn:IO_system}),  namely,  this occurs when the spectrum of the quotient graphs are the extrema of the related large-scale graphs.  We later discuss the implications of an initial preference of solution directions with respect to monotone dynamics.\par

Previous studies on pattern formation using IO systems have imposed the sufficient condition that the large-scale and quotient multilayer connectivity graphs $\cG_{k}$ are bipartite,  as this generates monotone dynamics with respect to the bipartition vector \cite{Arcak2013,gyorgy2017patternMulti,gyorgy2017pattern,RufinoFerreira2013}.  Namely,  a graph $\cG_{k}$ is said to be bipartite if the vertices $v \in V$ can be partitioned into two independent sets $V_{1}$ and $V_{2}$ such that no two vertices in the same set are adjacent \cite{godsil2001algebraicBook}.  Example bipartite bilayer graphs are given in Figure \ref{fig:bipartite_example}A.  However,  it can be demonstrated that for bipartite bilayer graphs,  the polarity-dependent eigenvalue,  $\overline{\lambda}_{k,2}$,  associated with laminar pattern formation cannot be dominant.

\begin{lemma}\label{lemma:bipartite_neq_min}
Let $\cG_{k}$ be a bipartite bilayer graph with weighted adjacency matrix $\bm{W}_{k} \in \bm{\cW}$.  Then for any $w_{1}^{[k]},w_{2}^{[k]}>0$ the polarity-dependent eigenvalue $\overline{\lambda}_{k,2}$ associated with the reduced adjacency matrix $\overline{\bm{W}}_{k}$ satisfies
\begin{equation}
\overline{\lambda}_{k,2} \neq \min \lp \Sp \lp \bm{W}_{k} \rp\rp.
\end{equation}
\end{lemma}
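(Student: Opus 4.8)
The plan is to pin down both quantities in closed form and compare them. From the proof of Lemma~\ref{lemma: sim_upper_triangular} we already know that $\Sp\lp \overline{\bm{W}}_{k} \rp = \{ 1,\, a_{k}+b_{k}-1 \}$, with the eigenvalue $1$ carried by the common eigenvector $\bm{1}_{2}$; hence the polarity-dependent eigenvalue is exactly $\overline{\lambda}_{k,2} = a_{k}+b_{k}-1$. Since $w_{1}^{[k]}, w_{2}^{[k]} > 0$ and $n_{1,\cL_{j}}^{[k]}, n_{2,\cL_{j}}^{[k]} \geq 1$, the formulae (\ref{eqn:a_b_def}) place $a_{k}, b_{k}$ strictly inside $(0,1)$, so $a_{k}+b_{k} > 0$ and therefore $\overline{\lambda}_{k,2} > -1$ for every admissible weight pair.

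Next I would identify $\min\lp \Sp\lp \bm{W}_{k} \rp \rp$. As $\bm{W}_{k}$ is nonnegative, irreducible and row-stochastic, the Perron--Frobenius theorem \cite{Chang2008} gives spectral radius $1$ attained at $\bm{1}_{N}$, and since $\bm{W}_{k}$ is symmetric its spectrum is real with $\max\lp \Sp\lp \bm{W}_{k} \rp \rp = 1$. The bipartite hypothesis fixes the other end: ordering the vertices by the bipartition $V_{1}, V_{2}$ (which in general differs from the bilayer partition $\cL_{1}, \cL_{2}$), the absence of edges inside each part forces the block form $\bm{W}_{k} = \bigl[ \begin{smallmatrix} \bm{0} & \bm{B} \\ \bm{B}^{T} & \bm{0} \end{smallmatrix} \bigr]$. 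Conjugating by $\bm{S} = \text{diag}\lp \bm{I}, -\bm{I} \rp$ sends $\bm{W}_{k}$ to $-\bm{W}_{k}$, so the two matrices are similar and $\Sp\lp \bm{W}_{k} \rp = -\Sp\lp \bm{W}_{k} \rp$ is symmetric about the origin; consequently $\min\lp \Sp\lp \bm{W}_{k} \rp \rp = -\max\lp \Sp\lp \bm{W}_{k} \rp \rp = -1$.

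Combining the two computations closes the argument, since $\overline{\lambda}_{k,2} > -1 = \min\lp \Sp\lp \bm{W}_{k} \rp \rp$ immediately gives $\overline{\lambda}_{k,2} \neq \min\lp \Sp\lp \bm{W}_{k} \rp \rp$ for all $w_{1}^{[k]}, w_{2}^{[k]} > 0$. I expect the middle step to be the main obstacle: the conjugation trick is routine once the vertices are ordered by the bipartition, but one must be careful that the bipartition is generally \emph{not} the layer partition $\{ \cL_{1}, \cL_{2} \}$, so the relevant zero blocks are not those of the layer-ordered form (\ref{eqn:w_block_form}); and it is precisely the coexistence of symmetry and row-stochasticity (i.e.\ double stochasticity) that simultaneously delivers $\max \Sp\lp \bm{W}_{k} \rp = 1$ via Perron--Frobenius and the reflection symmetry of the spectrum. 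Everything else reduces to direct substitution into (\ref{eqn:a_b_def}).
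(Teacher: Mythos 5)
Your argument is correct and follows essentially the same route as the paper: both identify $\overline{\lambda}_{k,2}=a_{k}+b_{k}-1\in(-1,1)$ from the quotient form, and both use the spectral symmetry of bipartite graphs together with row-stochasticity and connectedness to conclude $\min\lp\Sp\lp\bm{W}_{k}\rp\rp=-1$, which the quotient eigenvalue can never reach. The only difference is cosmetic — you prove the spectral symmetry via the $\text{diag}\lp\bm{I},-\bm{I}\rp$ conjugation while the paper cites it, and your caution that the bipartition need not coincide with the layer partition is a sensible clarification the paper leaves implicit.
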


\begin{proof}
Consider $\lambda_{k,j} \in \Sp  \lp \bm{W}_{k} \rp $,  then by the spectral symmetry of bipartite graphs about the origin we have that $-\lambda_{k,j} \in \Sp  \lp \bm{W}_{k} \rp $ \cite{godsil2001algebraicBook}.  As $\bm{W}_{k} \in \bm{\cW}$ then $\lambda_{k,1} =  \max \lp \Sp  \lp \bm{W}_{k} \rp \rp  = 1$ by the connected and row-stochastic properties of $\bm{W}_{k}$ \cite{johnson1981row}.  Consequently,  $-\lambda_{k,1} =  \min \lp \Sp \lp \bm{W}_{k} \rp \rp  = -1$.  However,  the minimal eigenvalue of the reduced adjacency matrix $\overline{\bm{W}}_{k}$ defined by the laminar pattern partition, $\pi_{2}$,  must be of the form $\overline{\lambda}_{k,2}  = a_{k} + b_{k} - 1$ for $a_{k},b_{k} \in \lp 0,1 \rp$ by Lemma \ref{lemma: sim_upper_triangular}.  Critically,  this implies that $\overline{\lambda}_{k,2} \in \lp -1,1 \rp$ and therefore  $\overline{\lambda}_{k,2}  \neq \min \lp \Sp \lp \bm{W}_{k} \rp\rp$ for any layer-wise polarity values $w_{1}^{[k]},w_{2}^{[k]}>0$.
\end{proof}

\begin{figure}[h]
\centering
\begin{subfigure}[b]{\textwidth}
         \centering
         \includegraphics[width=0.8\textwidth]{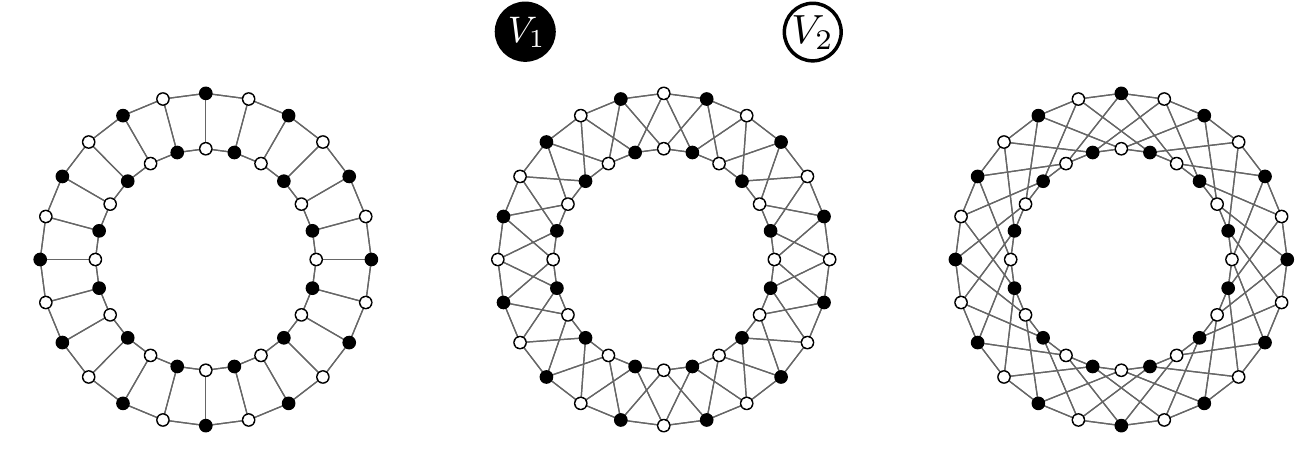}
         \caption{Bipartite bilayer graphs.}
         \label{fig:Bipart_A}
\end{subfigure}

     \begin{subfigure}[b]{\textwidth}
     \vspace{1em}
         \centering
         \includegraphics[width=0.95\textwidth]{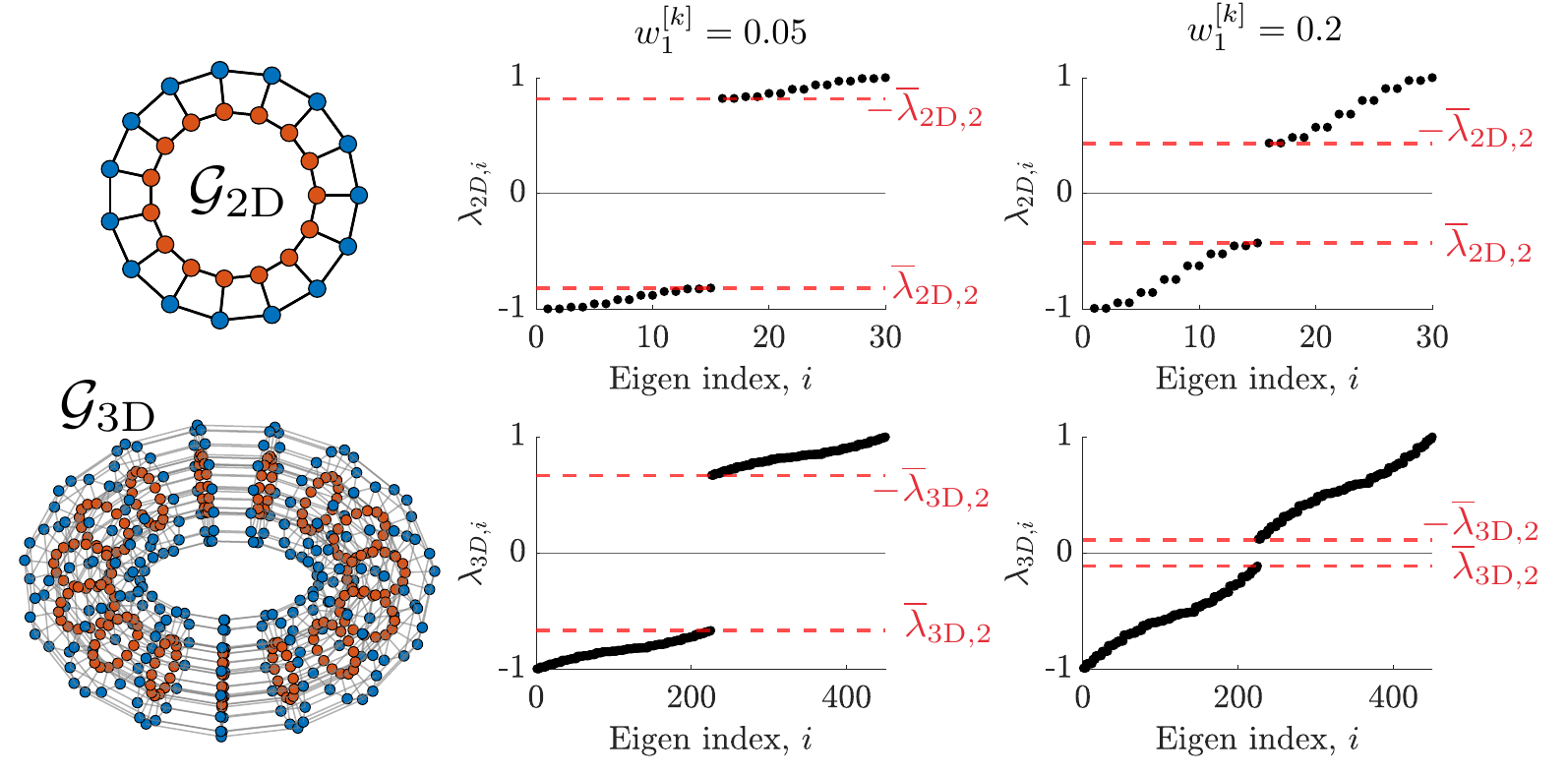}
         \caption{Example spectra of bipartite bilayers.}
         \label{fig:Bipart_B}
\end{subfigure}

\caption{Structure and spectra of bipartite bilayer connectivity graphs.  (A) Example regular bipartite graphs where vertices are coloured with respect to the bipartition sets $V_{1}$ and $V_{2}$ in black and white,  respectively.  (B) Spectra of two bipartite graphs $\cG_{\text{2D}}$ and $\cG_{\text{3D}}$ is shown for $w_{1}^{[k]} = 0.05$ and $w_{1}^{[k]} = 0.2$ for fixed $w_{2}^{[k]} = 1$ ($k \in \{ \text{2D}, \text{3D}  \}$) where the eigen index refers to the position of the eigenvalue when listed in ascending order.  The dashed red lines correspond to the polarity-dependent eigenvalue $\overline{\lambda}_{k,2}$ highlighting its position with respect to the ascending spectrum of the associated large-scale graph.  The vertices of the graphs are coloured layer-wise to emphasise their bilayer structure. } \label{fig:bipartite_example}
\end{figure}

A direct consequence of Lemma \ref{lemma:bipartite_neq_min} is that if the large-scale IO system (\ref{eqn:IO_system}) is spatially coupled by a bipartite bilayer graph $\cG_{k}$ then any trajectory initiated from a small perturbation of an unstable HSS will not be dominantly travelling in the direction of the eigenvector associated with laminar patterning.  Critically,  there will always exist a greater instability mode of the IO system (\ref{eqn:IO_system}).  Figure \ref{fig:bipartite_example}B demonstrates the consequences of Lemma \ref{lemma:bipartite_neq_min},  and for the given bipartite graphs,  the laminar patterning polarity-dependent eigenvalue $\overline{\lambda}_{k,2}$ defines a spectral gap about the origin which is proved in Appendix \ref{Appendix:spec_gap}.\par 

Following Lemma \ref{lemma:bipartite_neq_min},  we focus our attention on the spectral investigation of non-bipartite semi-regular bilayer graphs.  As we are interested in the polarity-driven pattern events using a pre-defined pattern template,  $\pi_{2}$,  we seek layer-wise polarity conditions in which $\overline{\lambda}_{k,2}$ becomes minimal.  Subsequently,  we considered a variety of non-bipartite graphs each with different edge connectivity structure and varied the same-layer weighting parameter $w_{1}^{[k]}$ for fixed $w_{2}^{[k]} =1$,  measuring the position of $\overline{\lambda}_{k,2}$ in terms of the ascending spectrum of the associated large-scale graph.  A summary of the non-bipartite connectivity structures that were considered are in given Table \ref{tab:non_bi_summary} in Appendix \ref{sec:ODE_methods}. \par

For each of the non-bipartite bilayer graphs that were considered,  we observed that decreasing same-layer weighting parameter,  $w_{1}^{[k]}$,  shifted the eigenvalue $\overline{\lambda}_{k,2}$ associated with laminar pattern formation towards the minimum of the spectrum (Figure \ref{fig:nonbipartite_example}).  Furthermore,  we demonstrate that $\overline{\lambda}_{k,2} = \min \lp \Sp \lp \bm{W}_{k} \rp\rp$ for values of $w_{1}^{[k]} < w_{2}^{[k]}$,  noting that this was achieved for higher values of $w_{1}^{[k]}$ in the graphs with more cross-layer connections than same-layer connections,  $n_{1, \cL_{i}} < n_{2, \cL_{i}} $.  Critically,  Figure \ref{fig:nonbipartite_example} highlights that there exists large-scale non-bipartite connectivity graphs that have the capacity to be fully characterised by the extrema of the spectrum of the laminar quotient graph by control of the amount of polarity in the system.  That is,  with high layer-wise polarity,  $w_{1}^{[k]} \ll w_{2}^{[k]}$,  we have $\min \lp \Sp \lp \overline{\bm{W}}_{k} \rp\rp=\min \lp \Sp \lp \bm{W}_{k} \rp\rp$ and $\max \lp \Sp \lp \overline{\bm{W}}_{k} \rp\rp=\max \lp \Sp \lp \bm{W}_{k} \rp\rp$.  \par

By Theorem \ref{thm: existence_of_laminar} we demonstrated that the existence of laminar patterns with competitive kinetics is dependent on the existence of connectivity polarity within the quotient connectivity graphs to induce both HSS instability and monotonicity of solutions.  Therefore,  in the following section, we explore whether solution behaviours observed in the reduced systems are preserved in the associated large-scale systems when the quotient graphs preserve the extrema of the spectra of the large-scale graphs.  Namely,  we show that the analysis conducted on the reduced IO systems yields global pattern convergence in high polarity regimes.

\begin{figure}[h!]
\includegraphics[width=1\textwidth]{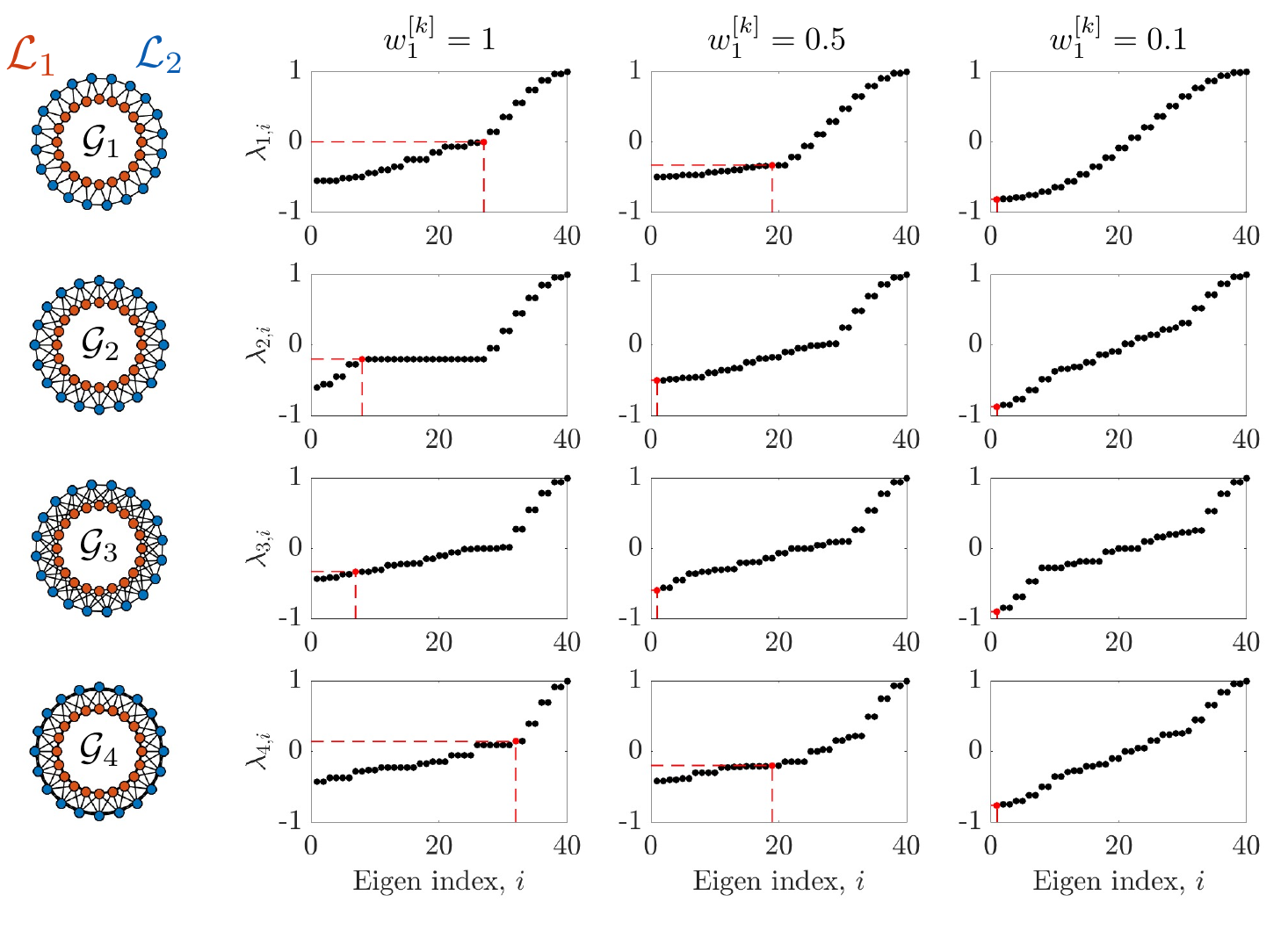}
\caption{Eigenvalues associated with laminar pattern trajectories tend to the minimum of the large-scale spectra as polarity increases in non-bipartite bilayer graphs.  The non-bipartite graphs $\cG_{k}$ ($k = 1,2,3,4$) are shown on the left with vertices coloured layer-wise.  The spectrum of each graph is then shown in ascending order for same-layer polarity values $w_{1}^{[k]} = 1$,   $w_{1}^{[k]} = 0.5$ and $w_{1}^{[k]} = 0.1$,  where the eigen index corresponds to the increasing ordering of the eigenvalues.  In each of the plots,   $\overline{\lambda}_{k,2}$ is highlighted in red with dashed lines included to emphasise the value position in the ascending ordering of eigenvalues.  Details on the connectivity structures each of the graphs is provided in Appendix \ref{sec:ODE_methods}.} \label{fig:nonbipartite_example}
\end{figure}

\subsection{Polarity induced laminar pattern formation derived by quotient systems for large-scale bilayer geometries}\label{sec:global_convergence} $\, $ \\
In this section, we investigate the conditions in which the patterns predicted using the dimension reduction technique of quotient templating are the globally dominant patterns produced in the large-scale IO systems.  We have demonstrated in Section \ref{sec:spec_links} that the spectra of non-bipartite semi-regular bilayer connectivity graphs have the capacity to be bounded by the extrema of the spectra of the associated quotient graphs defined by $\pi_{2}$.  This implies that the polarity-driven HSS instability imposed by the pattern existence condition of Theorem \ref{thm: existence_of_laminar} in the quotient systems must also exist in the large-scale systems and can become dominant in high-polarity regimes.  Therefore,  we now focus our attention on whether the large-scale IO system is monotone with respect to the eigenvector locally directing solutions to laminar patterns,  thus preserving trajectory direction. 

%[Highly-polarised semi-regular bilayers yield cooperative semi-flows for laminar pattern generation}%
\begin{lemma}\label{lemma:large_scale_mono}
Consider the large-scale IO system (\ref{eqn:IO_system}) spatially coupled by the global adjacency matrix $\bm{P}$ (\ref{eqn:interwoven_adj_mat}) where $\bm{W}_{k} \in \bm{\cW}$ for $k \in \{1,...,r \}$.   Assuming that (A1) is satisfied and the laminar pattern partition,  $\pi_{2}$,  is simultaneously equitable,  then if all connectivity graphs $\cG_{k}$ are highly polarised,  $w_{1}^{[k]} \ll w_{2}^{[k]}$,  the large-scale IO system (\ref{eqn:IO_system}) generates monotone solutions in the direction of laminar patterns.
\end{lemma}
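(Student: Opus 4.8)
The plan is to mirror the transformation argument of Theorem \ref{thm: existence_of_laminar}, but carried out on the full interconnection matrix $\bm{P}$ rather than its quotient $\overline{\bm{P}}$, with the two-vertex reflection $\widetilde{\bm{R}} = \text{diag}(-1,1)$ replaced by the layer-sign matrix $\bm{\Sigma} = \text{diag}(-\bm{I}_{|\cL_1|}, \bm{I}_{|\cL_2|})$. First I would introduce the large-scale auxiliary system
\begin{equation}
\dot{\bm{z}} = -\bm{z} + \bm{P}[\bm{T}(\bm{z}_1),\dots,\bm{T}(\bm{z}_N)]^T =: \bm{F}(\bm{z}),
\end{equation}
whose fixed points coincide with those of the IO system (\ref{eqn:IO_system}) by Lemma \ref{lemma:aux_ss}, and whose Jacobian $\bm{J}$ at the HSS is $-\bm{I}_{rN} + \bm{P}(\bm{I}_N \otimes \cD\bm{T}(\bm{z}^*))$. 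Exactly as in the reduced proof, the internal reflection $\bm{M} = \bm{I}_N \otimes \widetilde{\bm{M}}$ with $\widetilde{\bm{M}} = \text{diag}(-1,1)$ converts the sign structure $\cS_1$ into $\cS_2$, so I would assume without loss of generality that $\cD\bm{T}$ is a negative matrix.

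Next I would compute $\bm{R}\bm{J}\bm{R}$ with $\bm{R} = \bm{\Sigma} \otimes \bm{I}_r$ and $\sigma_v = \pm 1$ the layer sign of cell $v$. Because $\bm{R}$ is diagonal, the off-diagonal entry from $(v,p)$ to $(v',q)$ is $\sigma_v \sigma_{v'} (\bm{W}_p)_{vv'} (\cD\bm{T})_{pq}$. The key step is the off-diagonal row-sum: since the graphs carry no self-loops the same-cell cross-component terms vanish, and using the semi-regularity guaranteed by the equitable partition $\pi_2$ (each $v \in \cL_1$ has exactly $n_{1,\cL_1}^{[p]}$ intralayer and $n_{2,\cL_1}^{[p]}$ interlayer neighbours), the row-sum collapses to $(2a_p - 1)\sum_q (\cD\bm{T})_{pq}$ for $v \in \cL_1$ and $(2b_p - 1)\sum_q (\cD\bm{T})_{pq}$ for $v \in \cL_2$, with $a_p,b_p$ as in (\ref{eqn:a_b_def}). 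High polarity $w_1^{[p]} \ll w_2^{[p]}$ drives $a_p,b_p \to 0$, hence $2a_p - 1, 2b_p - 1 < 0$; since $\cD\bm{T}$ is negative its row sums are negative, so every off-diagonal row-sum is non-negative and Lemma \ref{lemma:type_k} yields type K. Irreducibility follows because $\bm{I}_N \otimes \cD\bm{T}$ is block diagonal with irreducible blocks by (A1), so Lemma \ref{lemma:irreducible} makes $\bm{P}(\bm{I}_N \otimes \cD\bm{T})$ irreducible and the sign flips of $\bm{R}$ preserve the zero pattern; Lemma \ref{lemma:strongly_monotone} then gives strong monotonicity (cooperativity) of the $\bm{R}$-transformed system.

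Finally, to identify the laminar direction as the monotone growth direction, I would invoke the spectral result of Section \ref{sec:spec_links}: under high polarity $\overline{\lambda}_{k,2} = \min(\Sp(\bm{W}_k))$ for every $k$, and the eigenvector lifted from the quotient is constant on each layer with the sign pattern of $\bm{\Sigma}$ (negative on $\cL_1$, positive on $\cL_2$). Thus $\bm{R}$ sends it into the positive orthant, where by the Perron--Frobenius Theorem it is the dominant eigenvector of the cooperative Jacobian; small perturbations of the HSS along it therefore have positive derivative and grow monotonically, which in the original coordinates is precisely the laminar pattern.

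The main obstacle I anticipate is the spectral-dominance step of the last paragraph rather than the row-sum bookkeeping. Because we have deliberately given up simultaneous diagonalisation in the large-scale setting (Lemma \ref{lemma: sim_upper_triangular} only triangularises the reduced matrices), the eigenvalues of $\bm{P}(\bm{I}_N \otimes \cD\bm{T})$ no longer factor cleanly through the $\bm{W}_k$ spectra, so one must argue rigorously that the extremal eigenvalue $\overline{\lambda}_{k,2} = \min(\Sp(\bm{W}_k))$ really corresponds to the largest-real-part eigenvalue of the full cooperative Jacobian. I expect this dominance must be recovered from monotonicity itself --- using that the competitive kinetics $\cD\bm{T} < 0$ pair most unstably with the most negative graph eigenvalue --- rather than from a direct spectral decomposition.
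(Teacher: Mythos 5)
Your proposal follows essentially the same route as the paper: form the large-scale auxiliary system, reduce $\cS_{1}$ to $\cS_{2}$ by the internal reflection, conjugate by the layer-sign matrix $\widetilde{\bm{R}}\otimes\bm{I}_{r}$, and show the off-diagonal row-sums are non-negative because every negative contribution carries the intralayer weight $\hat{w}_{1}^{[k]}$ while every positive one carries the interlayer weight $\hat{w}_{2}^{[k]}$, giving type K for $w_{1}^{[k]}\ll w_{2}^{[k]}$. Two small notes: at a generic (non-homogeneous) point the row-sum does not literally collapse to $\lp 2a_{p}-1\rp\sum_{q}\lp\cD\bm{T}\rp_{pq}$ since $\cD\bm{T}\lp\bm{z}_{v'}\rp$ varies with the neighbour $v'$ (though the sign argument survives unchanged), and your final paragraph on Perron--Frobenius dominance of the laminar eigenvalue is not required for this lemma --- the paper defers that step to Theorem \ref{thm:global_convergence}, so the obstacle you flag belongs there rather than here.
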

\begin{proof}
Similar to Theorem \ref{thm: existence_of_laminar} we consider the large-scale auxiliary system 
\begin{equation}\label{eqn:large_aux}
\begin{bmatrix}
\dot{\bm{z}}_{1} \\
\vdots \\
\dot{\bm{z}}_{N} 
\end{bmatrix}
 = 
- \begin{bmatrix}
\bm{z}_{1} \\
\vdots \\
\bm{z}_{N} 
\end{bmatrix}
 + \bm{P} 
 \begin{bmatrix}
\bm{T} \lp \bm{z}_{1} \rp \\
\vdots \\
\bm{T} \lp \bm{z}_{N} \rp
 \end{bmatrix} \coloneqq \bm{F} \lp \bm{z} \rp,
\end{equation}
which has the identical behaviour to the large-scale IO system (\ref{eqn:IO_system}) by Lemma \ref{lemma:aux_ss} yet the auxiliary system (\ref{eqn:large_aux}) only explicitly considers the spatially dependent components of the model.  First,  we will construct the sign structure of the eigenvector associated with laminar patterns in the large-scale graphs.  Then,  by transforming the auxiliary system (\ref{eqn:large_aux}) to ensure the positivity of the laminar pattern eigenvector,  we demonstrate that the large-scale IO system (\ref{eqn:IO_system}) has the capacity to become type K for high polarity bilayers.  \par

Linearising the auxiliary system (\ref{eqn:large_aux}) about a generic point $\bm{z} \in \R_{\geq}^{rN}$ yields
\begin{equation} \label{eqn:lin_large_aux}
\frac{ \pd \bm{F}}{ \pd \bm{z} } = -\bm{I}_{rN} + \bm{P}  \begin{bmatrix}
 \cD \bm{T} \lp \bm{z}_{1} \rp & & \\
 & \ddots  & \\
 & &  \cD \bm{T} \lp \bm{z}_{N} \rp
\end{bmatrix}
\end{equation}
where $\bm{T}\lp \cdot \rp$ satisfies (A1) and thus $\text{sgn} \lp \cD \bm{T} \rp = \cS_{1}$ or $\text{sgn} \lp \cD \bm{T} \rp = \cS_{2}$.  As in the proof of Theorem (\ref{thm: existence_of_laminar}),  the transformation $\bm{M} = \bm{I}_{N} \otimes \widetilde{\bm{M}}$ where $\widetilde{\bm{M}} = \text{diag} \lp -1 ,1\rp$ demonstrates the equivalence of the sign structures,  that is,  if $\text{sgn} \lp \cD \bm{T} \rp = \cS_{1}$  then $ \text{sgn} \lp \widetilde{\bm{M}}^{T}  \cD \bm{T} \widetilde{\bm{M}} \rp = \cS_{2} $.  Therefore we continue assuming $\text{sgn} \lp \cD \bm{T} \rp = \cS_{2}$,  critically that $\text{diag} \lp  \cD \bm{T} \lp \bm{z}_{1} \rp , ...,  \cD \bm{T} \lp \bm{z}_{N} \rp  \rp$ is a non-positive matrix.

The reduced graphs associated with the laminar pattern template $\cG_{k,\pi_{2}} $ have eigenvalues $\overline{\lambda}_{k,1} = 1$ and $\overline{\lambda}_{k,2} = a_{k} + b_{k} - 1$ with eigenvectors $\overline{\bm{v}}_{k,1} = [1,1]^{T}$ and $\overline{\bm{v}}_{k,2} = [1,\lp b_{k} - 1 \rp / \lp 1-a_{k} \rp]^{T}$,  noting that $a_{k},b_{k} \in (0,1)$ by definition of the reduced adjacency matrix $\overline {\bm{W}}_{k}$ (\ref{eqn:reduce_adj_mat}).  Subsequently,  the polarity dependent eigenvector has sign structure $\text{sgn} \lp \overline{\bm{v}}_{k,2} \rp = [+, -]^{T}$.  Furthermore,  as $\pi_{2}$ is equitable for all graphs $\cG_{k}$,  then there exists a matrix $\bm{L} \in \{0,1 \} ^{N \times 2}$ that maps the large-scale graph into the quotient graph such that 
\begin{equation}\label{eqn:quo_to_large_map}
\bm{L} \overline{\bm{W}}_{k} = \bm{W}_{k} \bm{L} 
\end{equation}
where $\bm{L}$ allocates the vertices of the large-scale system into the reduced groups associated with the laminar pattern template \cite{godsil2001algebraicBook}.  Owing to the layer-wise vertex indexing as constructed in Section \ref{sec:model_definition},  we have that 
\begin{equation}
\bm{L} = \begin{bmatrix}
\bm{1}_{|\cL_{1}| \times 1}  & \bm{0}_{|\cL_{1}|  \times 1} \\
\bm{0}_{|\cL_{2}|  \times 1}  & \bm{1}_{|\cL_{2}| \times 1} 
\end{bmatrix}.
\end{equation}
From the quotient to large-scale algebraic relation (\ref{eqn:quo_to_large_map}),  we have that $\bm{L} \overline{\bm{v}}_{k,2}$ is an eigenvector of $\bm{W}_{k}$ with eigenvalue $\overline{\lambda}_{k,2}$.  Specifically,  this implies that the eigenvector associated with laminar patterning in the large-scale graphs has the sign structure
$ \text{sgn} \lp \bm{L} \overline{\bm{v}}_{k,2} \rp = [+,...,+, -,...,-]^{T} $ which has $|\cL_{1}|$ and $|\cL_{2}|$ positive and negative entries,  respectively.  Hence,  the matrix $ \widetilde{\bm{R}} = \text{diag} \lp 1,...,1,-1,...,-1 \rp$ orientates the laminar patterning eigenvector $ \bm{L} \overline{\bm{v}}_{k,2} $ in the positive orthant,  i.e.,  $\widetilde{\bm{R}}   \bm{L} \overline{\bm{v}}_{k,2} >0 $.  \par  

We next introduce the transformation $\bm{w} = \bm{R}\bm{z}$ where $\bm{R} = \widetilde{\bm{R}}  \otimes \bm{I}_{r}$,  noting that $\bm{R}^{-1} = \bm{R}$.  Following this change of variables,  let $\bm{X}_{1} =  \text{diag} \lp \cD \bm{T} \lp \bm{z}_{1} \rp, ...,  \cD \bm{T} \lp \bm{z}_{|\cL_{1}|} \rp  \rp$ and $\bm{X}_{2} =  \text{diag} \lp \cD \bm{T} \lp \bm{z}_{|\cL_{1}| + 1} \rp, ...,  \cD \bm{T} \lp \bm{z}_{N} \rp  \rp$ be non-positive matrices,  then in combination with the layer-wise block formulation of the bilayer adjacency matrices,  $\bm{W}_{k}$,  the linearised auxiliary system (\ref{eqn:lin_large_aux}) has the form
\begin{align}\label{eqn:aux_large_jac_trans}
\bm{R} \frac{ \pd \bm{F}}{ \pd \bm{z} }  \bm{R} &= -\bm{I}_{rN} + \bm{R} \bm{P}  \begin{bmatrix}
 \bm{X}_{1}  & \bm{0}  \\
  \bm{0} &   \bm{X}_{2}
\end{bmatrix} \bm{R},  \nonumber \\ 
&=  -\bm{I}_{rN} + \lp \sum_{i=1}^{r}  \widetilde{\bm{R}} \bm{W}_{i} \otimes \bm{D}_{i} \rp \cdot 
\begin{bmatrix}
 \bm{X}_{1}  &   \bm{0}  \\
  \bm{0}  &   -\bm{X}_{2}
\end{bmatrix},
\nonumber \\
&= -\bm{I}_{rN} + \lp \sum_{i=1}^{r} \begin{bmatrix}
 \widehat{\bm{W}}_{1,\cL_{1}}^{[i]} \otimes \bm{D}_{i} &  \widehat{\bm{W}}_{2,\cL_{1}}^{[i]} \otimes \bm{D}_{i} \\
 - \lp \widehat{\bm{W}}_{2,\cL_{1}}^{[i]} \rp^{T} \otimes \bm{D}_{i} &  - \widehat{\bm{W}}_{1,\cL_{2}}^{[i]}\otimes \bm{D}_{i} 
\end{bmatrix} \rp \cdot \begin{bmatrix}
 \bm{X}_{1}  &  \bm{0}  \\
  \bm{0}  &  - \bm{X}_{2}
\end{bmatrix},  \nonumber \\
&= -\bm{I}_{rN}  + \sum_{i=1}^{r} \begin{bmatrix}
 \lp \widehat{\bm{W}}_{1,\cL_{1}}^{[i]} \otimes \bm{D}_{i}  \rp \bm{X}_{1} & - \lp \widehat{\bm{W}}_{2,\cL_{1}}^{[i]} \otimes \bm{D}_{i} \rp \bm{X}_{2} \\
 - \lp  \lp \widehat{\bm{W}}_{2,\cL_{1}}^{[i]} \rp^{T} \otimes \bm{D}_{i} \rp \bm{X}_{1} &  \lp \widehat{\bm{W}}_{1,\cL_{2}}^{[i]}\otimes \bm{D}_{i} \rp \bm{X}_{2}
\end{bmatrix},
\end{align}
by the mixed-product and block-product properties of the Kronecker product \cite{graham2018kronecker}.  The transformed auxiliary system (\ref{eqn:aux_large_jac_trans}) is monotone if the off-diagonal row-sum is non-negative by Lemma \ref{lemma:type_k}.  Namely,  if $\tau \lp i\rp = \lp i-1 \rp \bmod{r} + 1$ then
%this needs to be looked at again - in terms of the DTs
\begin{equation}
\sum_{j\neq i} \lp\bm{R} \frac{ \pd \bm{F}}{ \pd \bm{z} }  \bm{R}  \rp_{ij} = \begin{dcases}  
 \hat{w}_{1}^{[ \tau \lp i\rp]}   \sum_{j=1,j\neq i}^{r |\cL_{1}|}  \lp \bm{X}_{1} \rp_{ \tau \lp i\rp j} -  \hat{w}_{2}^{[\tau \lp i\rp]}  \sum_{j=1, j\neq i}^{r |\cL_{2}|} \lp \bm{X}_{2}  \rp_{ \tau \lp i\rp j}  & 1 \leq i \leq r|\cL_{1}|,  \\
- \hat{w}_{2}^{[ \tau \lp i\rp]}   \sum_{j=1,j\neq i}^{r |\cL_{1}|}  \lp \bm{X}_{1} \rp_{ \tau \lp i\rp j} +  \hat{w}_{1}^{[\tau \lp i\rp]}  \sum_{j= 1,j\neq i}^{r |\cL_{2}|} \lp \bm{X}_{2} \rp_{ \tau \lp i\rp j} &  \text{ }  r|\cL_{1}|+ 1 \leq i \leq rN. \\
\end{dcases}
\end{equation}
In particular,  as $\text{sgn} \lp \cD \bm{T} \rp = \cS_{2}$,  then all positive and negatives components of the row-sum are scaled by $w_{2}^{[k]}$ and $w_{1}^{[k]}$,  receptively.  Then for sufficiently small values of $w_{1}^{[k]}$ combined with relatively large values $w_{2}^{[k]}$,  confirmed by $w_{1}^{[k]} \ll w_{2}^{[k]}$,   we have that,
\begin{equation}
\sum_{j\neq i} \lp\bm{R} \frac{ \pd \bm{F}}{ \pd \bm{z} }  \bm{R}  \rp_{ij} \geq 0
\end{equation} 
for all $ 1 \leq i \leq rN$.  Therefore the auxiliary system (\ref{eqn:large_aux}) is type K by Lemma \ref{lemma:type_k} and so is monotone in the direction for solutions associated with laminar patterning in high polarity regimes.
\end{proof}

Applying the cooperative transformation in high-polarity regimes to an IO system (\ref{eqn:IO_system}) where the extrema of the spectra are preserved in the quotient mapping guarantees the global convergence of laminar patterns in the large-scale systems.  Critically,  this extends the existence statement of Theorem \ref{thm: existence_of_laminar} to sufficient conditions for large-scale laminar patterning.

\begin{theorem}[Global convergence of laminar patterns in highly-polarised regimes]\label{thm:global_convergence}
Consider the large-scale IO system (\ref{eqn:IO_system}) spatially coupled by the global adjacency matrix $\bm{P}$ (\ref{eqn:interwoven_adj_mat}) where $\bm{W}_{k} \in \bm{\cW}$ for $k \in \{1,...,r \}$.   Assuming that (A1) is satisfied, the laminar pattern partition $\pi_{2}$ is simultaneously equitable,  and each connectivity graph, $\cG_{k}$,  is highly polarised,  $w_{1}^{[k]} \ll w_{2}^{[k]}$.  Then if $\overline{\lambda}_{k,2} = \min \lp \Sp \lp \bm{W}_{k} \rp \rp$ such that the laminar pattern existence criterion,  Theorem \ref{thm: existence_of_laminar},  is satisfied,  then laminar patterns are globally convergent in the large-scale IO system (\ref{eqn:IO_system}).
\end{theorem}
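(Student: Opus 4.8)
The plan is to promote the quotient-level convergence argument of Theorem \ref{thm: existence_of_laminar} to the full system, by showing that under the stated hypotheses the large-scale auxiliary system (\ref{eqn:large_aux}) is strongly monotone with respect to the laminar orthant and that its dominant instability mode at the HSS points in the laminar direction. First I would apply Lemma \ref{lemma:large_scale_mono}: the high-polarity condition $w_{1}^{[k]} \ll w_{2}^{[k]}$ together with (A1) and the simultaneous equitability of $\pi_{2}$ guarantees that the transformed Jacobian $\bm{R} (\pd \bm{F} / \pd \bm{z}) \bm{R}$, with $\bm{R} = \widetilde{\bm{R}} \otimes \bm{I}_{r}$ orienting the lifted laminar eigenvector $\bm{L} \overline{\bm{v}}_{k,2}$ into the positive orthant, is type K, so the transformed system is cooperative. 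To upgrade monotonicity to strong monotonicity I would invoke Lemma \ref{lemma:irreducible}: since the irreducibility clause of (A1) makes each diagonal block of $\text{diag}(\cD \bm{T}(\bm{z}_{1}), ..., \cD \bm{T}(\bm{z}_{N}))$ an irreducible $r \times r$ matrix, the product $\bm{P} \cdot \text{diag}(\cD \bm{T})$ is irreducible, hence so is the transformed Jacobian, and Lemma \ref{lemma:strongly_monotone} then yields strong monotonicity.

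The decisive step is to pin the dominant instability mode of the transformed linearisation at the HSS to the laminar direction, and this is where the spectral hypothesis $\overline{\lambda}_{k,2} = \min(\Sp(\bm{W}_{k}))$ is indispensable. Because the transfer kinetics are competitive, after reduction to sign structure $\cS_{2}$ the matrix $\cD \bm{T}(\bm{u}_{0})$ is non-positive, so the eigenvalue of $\bm{W}_{k}$ that generates the largest-real-part eigenvalue of the Jacobian (schematically $-1 + \lambda_{k} \mu$ with $\mu \leq 0$) is the most negative graph eigenvalue, which by assumption is exactly $\overline{\lambda}_{k,2}$. Hence the laminar mode is the dominant instability mode. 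Since the transformed system is cooperative and irreducible, the Perron--Frobenius Theorem \cite{Chang2008} forces the eigenvalue of largest real part to be real, simple, and to carry a strictly positive eigenvector; as $\bm{R}$ was constructed precisely so that the transformed laminar eigenvector $\widetilde{\bm{R}} \bm{L} \overline{\bm{v}}_{k,2}$ lies in the positive orthant, that eigenvector must be the Perron eigenvector $\bm{v} > 0$.

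With the dominant mode fixed in the laminar direction, I would close exactly as in Theorem \ref{thm: existence_of_laminar}. Boundedness of $\bm{T}(\cdot)$ supplies forward-invariant sets $\mathcal{V}_{\pm} = \bm{R} \bm{z}^{*} \pm (\R_{\geq 0}^{rN} \cap [0,b]^{rN})$ centred on the transformed HSS, so every trajectory stays in a compact set, while small perturbations $\bm{R} \bm{z}^{*} + \epsilon \bm{v}$ along the positive Perron eigenvector have positive derivative and increase monotonically. The Cooperative Irreducible Convergence Theorem (Theorem 4.3.3 in \cite{smith2008monotone}) then forces convergence to a steady state $\bm{R} \bm{z}^{**} \neq \bm{R} \bm{z}^{*}$, which in the original coordinates separates the two layers into $\mathcal{V}_{+}$ and $\mathcal{V}_{-}$, i.e. a laminar pattern; Lemma \ref{lemma:aux_ss} transfers this back to the IO system (\ref{eqn:IO_system}).

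The hard part is the dominance step of the second paragraph: one must confirm that the graph-spectral minimum genuinely propagates through the interwoven Kronecker sum $\bm{P}$ and the competitive transfer Jacobian to become the maximal-real-part eigenvalue of the full $rN \times rN$ linearisation, and that the corresponding full eigenvector factorises with graph component $\bm{L} \overline{\bm{v}}_{k,2}$. Because the $\bm{W}_{k}$ need not commute, the modes across signalling layers do not decouple cleanly, so one must rule out a competing mode from another layer overtaking the laminar one; the spectral-extremum observations of Section \ref{sec:spec_links}, holding simultaneously across all $\cG_{k}$ in the high-polarity limit, are exactly what guarantee the positive Perron eigenvector supplied by $\bm{R}$ is not displaced.
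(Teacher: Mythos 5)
Your proposal is correct and follows essentially the same route as the paper's proof: invoke Lemma \ref{lemma:large_scale_mono} for type-K monotonicity in the laminar direction under high polarity, Lemma \ref{lemma:irreducible} together with Lemma \ref{lemma:strongly_monotone} for strong monotonicity, and then boundedness plus the Cooperative Irreducible Convergence Theorem to force convergence to a heterogeneous equilibrium that separates the two layers. Your second and fourth paragraphs simply make explicit (via Perron--Frobenius and the spectral-minimum hypothesis) the mode-dominance step that the paper leaves largely to the discussion in Section \ref{sec:spec_links}, which is a reasonable and faithful elaboration rather than a different argument.
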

\begin{proof}
Following Theorem \ref{thm: existence_of_laminar},  by analysing the quotient graphs there exists $\overline{\bm{ \Lambda}}_{2}$ such that the HSS instability condition (\ref{thm: HSS instability det}) is satisfied.  In addition,  Lemma \ref{lemma:large_scale_mono} guarantees that the IO system (\ref{eqn:IO_system}) generates monotone solutions in the direction of laminar patterns,  such that the eigenvector associated with $\overline{\lambda}_{k,2}$ is directed in the positive orthant,  $\widetilde{\bm{R}}\bm{L} \overline{\bm{v}}_{k,2} >0$.  Furthermore,  Lemma \ref{lemma:irreducible} ensures that the linearised IO system is irreducible and thus the IO system (\ref{eqn:IO_system}) is strongly monotone by Lemma \ref{lemma:strongly_monotone}.  \par  
By the identical arguments of Theorem \ref{thm: existence_of_laminar},  the corresponding large-scale auxiliary system (\ref{eqn:lin_large_aux}) has bounded solutions,  which induces the convergence of solutions to steady-state $\bm{Rz}^{**} \neq \bm{Rz}^{*}$ by the Cooperative Irreducible Convergence Theorem (Theorem 4.3.3 in \cite{smith2008monotone}).  Critically,  mapping back to the original coordinating system guarantees that vertices in different layers have contrasting solutions.
\end{proof}

The sufficient conditions for large-scale laminar patterning outlined in Theorem \ref{thm:global_convergence} ensure that the behaviour observed in the quotient systems is preserved in the corresponding large-scale systems.  Subsequently,  this enables an analytic approach to pattern prediction as we can fully determine the spectra of the quotient graphs $\cG_{2}$ independently and without imposing commutativity conditions on the reduced adjacency matrices.  The following example demonstrates the accessibility of the analysis for large-scale IO systems spatially coupled multilayer connectivity graphs.

\begin{example}\label{example:large_example}
We revisit Example \ref{example:laminar_existence} to seek a polarity regime that guarantees the global convergence of laminar patterns using analysis conducted in the quotient systems when templating the large-scale system using the equitable partition, $\pi_{2}$.  Namely,  in conjunction with the results of applying Theorem \ref{thm: existence_of_laminar} to the DIDO system  (\ref{example_io_system_1}-\ref{example_io_system_2}) as in Example \ref{example:laminar_existence},  we also invoke Theorem \ref{thm:global_convergence} to isolate regions of polarity parameter values for $w_{1}^{[1]}$ and $w_{1}^{[2]}$ in which the extrema of the quotient graph spectra are the extrema of the large-scale graphs.  \par 
As each of the connectivity large-scale and quotient graphs are row-stochastic,  we always have 
\begin{equation}
\max \lp \Sp \lp \overline{\bm{W} }_{k}\rp\rp = \max \lp \Sp \lp \bm{W} _{k}\rp\rp = 1,
\end{equation}
therefore,  the quotient graphs retain the maximum eigenvalues,  and so now we focus on the preservation of the minimal eigenvalues. \par

In Figure \ref{fig:nonbipartite_example} we have demonstrated that for $w_{1}^{[k]} < 0.5$ and $w_{2}^{[k]} = 1$,  the quotient connectivity graphs for short-range diffusion and contact-dependent signalling mechanism,  $\cG_{1,\pi_{2}}$ and $\cG_{2,\pi_{2}}$ (which are denoted $\cG_{1}$ and $\cG_{3}$ in Figure \ref{fig:nonbipartite_example},  respectively) have the capacity to bound the spectra of the large-scale graphs $\cG_{1}$ and $\cG_{2}$ from below.  Critically,  this implies that for any $w_{1}^{[k]} < 0.5$ with fixed $w_{2}^{[k]} = 1$,  which induced HSS instability,  solutions will be locally directed towards laminar patterning and so following from Theorem \ref{thm:global_convergence},  for sufficiently small $w_{1}^{[k]} <0.5$,  the large-scale DIDO system  (\ref{example_io_system_1}-\ref{example_io_system_2}) will converge to laminar patterns. \par 

To highlight the results of applying both theorems \ref{thm: existence_of_laminar} and \ref{thm:global_convergence} to the example DIDO system  (\ref{example_io_system_1}-\ref{example_io_system_2}),  regions of pattern convergence were found numerically in Figure \ref{fig:converge_ex_1} which includes examples of large-scale simulations for which laminar patterns are and are not dominant.  It is worth noting that the magnitude of the difference between $w_{1}^{[k]}$ and $w_{2}^{[k]}$ is dependent on the magnitude of the entries of $ \cD \bm{T} \lp \bm{u}_{i} \rp$ and thus assuming that $w_{1}^{[k]} \ll w_{2}^{[k]}$ is sufficient for the monotonicity of the large-scale system but is not necessary to satisfy the type K criteria (Lemma \ref{lemma:type_k}).  Subsequently,  simply selecting polarity parameters in which both the HSS instability condition for the reduced system (\ref{eqn:hss_instab_example}) and $\min \lp  \Sp  \lp \overline{\bm{W}}_{k} \rp \rp = \min \lp \Sp \lp \bm{W}_{k} \rp \rp$ are satisfied resulted in the large-scale system converging to laminar patterns without requiring significant layer-wise polarity.  

\begin{figure}[h!]
\includegraphics[width=\textwidth]{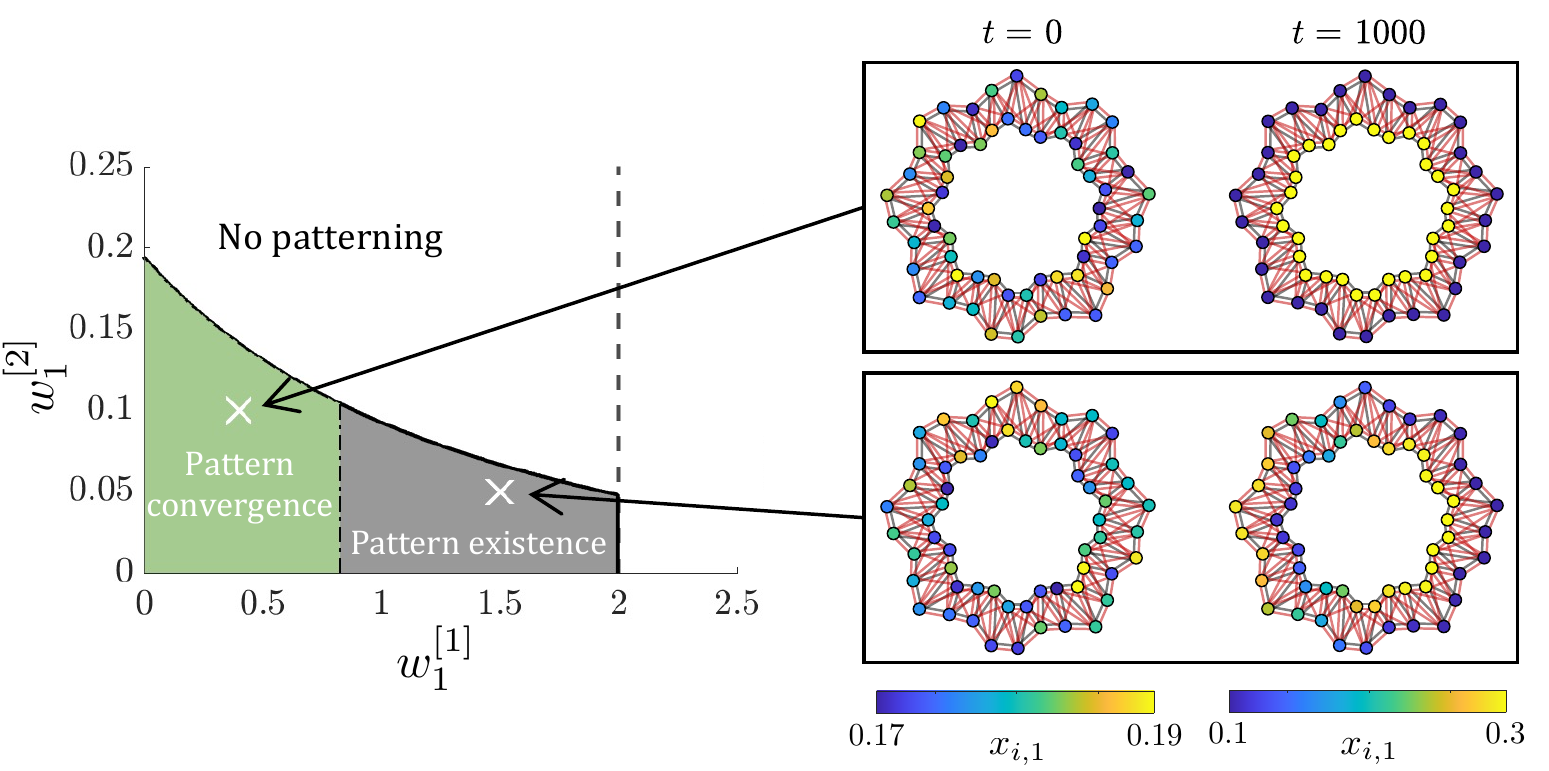}
\caption{Polarity parameter regimes for the existence and convergence of laminar pattern in the large-scale IO system (\ref{example_io_system_1}-\ref{example_io_system_2}).  The grey existence region is determined using the quotient system analysis and is defined by polarity-driven HSS instability inequality (\ref{eqn:hss_instab_example}) for fixed $w_{2}^{[1]}=w_{2}^{[1]} = 1$.  The green convergence region highlights the subset of grey region in $\lp w_{1}^{[1]},  w_{1}^{[2]} \rp$-space in which $\min \lp  \Sp  \lp \overline{\bm{W}}_{k} \rp \rp = \min \lp \Sp \lp \bm{W}_{k} \rp \rp$.  Example large-simulations are shown for polarity parameters inside the convergence region, (0.4,0.1),  and inside the existence region (1.5,0.05).  Large-scale bilayer graphs are shown with both $\cG_{1}$ and $\cG_{2}$ embedded in the same vertex set with edges in black and red,  respectively.  Vertex colour corresponds to the values of $x_{i,1}$ in each $v_{i}$.  Simulations were initiated from small random perturbations about the HSS of the IO system (\ref{example_io_system_1}-\ref{example_io_system_2}) and first and final states are shown following trajectory convergence.  IO system (\ref{example_io_system_1}-\ref{example_io_system_2}) parameter values and details on simulations are given in Appendix \ref{sec:ODE_methods}.  } \label{fig:converge_ex_1}
\end{figure}

\end{example}

As highlighted in Example \ref{example:large_example},  theorems \ref{thm: existence_of_laminar} and \ref{thm:global_convergence} facilitate the analytic study of laminar pattern formation in large-scale interconnected dynamical systems,  independent of the number of cells in the system or physical dimension owing to the topological definition of the connectivity graphs.  Hence the pattern analysis conducted on the quotient systems can evolve from explorative- which geometries enable laminar patterning,  to constructive- how much edge weight manipulation is required to robustly generate laminar patterns.

\section{Discussion}
In this study,  we have developed analytic methods for exploring the interplay of cellular polarity and multiple signalling mechanisms in the emergence of laminar patterns in bilayer tissues independent of the precise intracellular kinetics.  To facilitate such analysis we focused on methods of dimension reduction of large interconnected dynamical systems that preserve fundamental cellular behaviour.  Specifically,  we demonstrate that cell signalling transfer dynamics can be treated as a proxy for intracellular components,  reducing the dimensionality of the spatially discrete ODE systems by analysing only the spatially dependent intracellular components,  which enabled us to provide sufficient conditions for the existence and uniqueness of the homogeneous steady state.  \par

In addition,  we use properties of commuting graphs to decompose large MIMO systems into lower-order interconnected systems,  decoupling the spatial and temporal components.  This not only has advantages in reducing the computational cost associated with large-scale eigenvalue problems but also enables the direct analysis of the influence each signalling mechanism has on driving spatial instabilities of the homogeneous steady state.  From a practical standpoint,  the requirement of commuting graphs of cell signalling currently limits the applications of the large-scale HSS instability results in general pattern formation problems as there exist no analytically tractable methods for checking these conditions for large graphs.  Therefore developing a procedure for constructing commuting families of large signalling graphs is critical to broadening the scope of these modelling approaches.  \par %diamond methods \cite{dini2013diamond} would suffice 

Combining methods of multilayer graph partitions with monotone dynamical systems theory,  we demonstrate the existence of laminar pattern formation with competitive kinetics relies on the amount of signalling polarisation present within each graph.  Critically,  the application of equitable partitions to the connectivity structures where layer-wise symmetries are present enables drastic dimensionality reductions of the global dynamical system when seeking contrasting steady-states between the bilayer of cells.  Thereby exploiting the eigenvalue structure of the quotient graphs we demonstrate the instability conditions derived for large-scale interconnected dynamical systems that can be applied to the reduced system,  independent of commutativity of the quotient graphs,  which facilitates the investigation of whether the pre-defined contrasting states are achievable with the given kinetics.  The symmetry requirements of the equitable partitions need not be restricted to globally regular cell-cell interaction graphs.  We only require regularity within each partition which therefore permits the application of semi-regular graphs for dimension reduction.  Such graphs can then capture characteristic traits of the biological system such as subpopulation phenotypes and tissue curvature,  and their influence on intracellular behaviour.\par

These methods of prescribing patterns allude to studying the inverse problem,  specifically,  starting with the desired pattern of the tissue and then defining constraints for the intracellular kinetics that have the potential to induce such instabilities,  as previously demonstrated in spatially continuous Turing systems \cite{woolley2021bespoke}.  Additionally,  as the full and quotient system analysis depends only on the topology of the connectivity networks,  the results from this study are immediately applicable to more biologically relevant 3D morphologies. \revisedTC{ Classically,  introducing 3D structures drastically increases the computational complexity in pattern formation analysis \cite{arpon2021spatial,  okuda2018combining,krause2021introduction},  yet the topological approach allows for the transition between physical dimension with no additional requirements as discussed in \cite{moore2021algebraic}}. \par

\revisedTC{
Investigating the link between the reduced and large-scale dynamical systems when seeking laminar patterns,  we demonstrate the statements of laminar existence derived using pattern-templating have the capacity to be globally convergent in the corresponding large-scale interconnected system in high polarity regimes.  To show the existence of a monotone transformation we imposed weak but sufficient conditions that $w_{1}^{[k]} \ll w_{2}^{[k]}$,  highlighting the requirement of edge weight anisotropy for laminar pattern formation.  However, we suspect that this condition can be significantly refined by illustrating a dependence on the magnitude of entries of $\cD \bm{T} \lp \bm{u}_{i} \rp$ when applying the type K criterion for monotone solution behaviour,  namely,  having \textit{a priori} estimations of the size of the cellular output signals for given input signal regimes.} \par 

%limations of the study monotone....
As discussed in the previous interconnected monotone systems studies of pattern formation \cite{Arcak2013,gyorgy2017pattern,gyorgy2017patternMulti},  the most limiting assumption in large-scale systems analysis is the existence of competitive to cooperative monotone kinetics transformation which previously has relied on the sufficient requirement of the connectivity graphs being bipartite.  However,  in Section \ref{sec:global_convergence} we not only demonstrate that laminar patterns are not the dominant pattern of bipartite bilayer graphs but also manipulating graph edge weights of non-bipartite graphs enables competitive to cooperative kinetics transformations for laminar pattern formation.  The key feature of cooperative dynamics used in these pattern formation studies is the guarantee of non-periodic solutions when considering bounded kinetics \cite{smith2008monotone}.  Therefore another promising direction to ensure such solution behaviour is the study of variational families associated with the interconnected systems \cite{angeli2021robust},  that is,  applying Lyapunov methods for non-oscillatory dynamics to enable the investigation of intracellular crosstalk inference in biologically relevant morphologies.

%thoughout this study we have used to a 
Throughout this study,  we have reserved precise definitions of the intracellular kinetics and associated signalling mechanisms to consider general competitive MIMO dynamics.  Subsequently,  the generality of results presented here enables the investigation of crosstalk of key molecular pathways with multiple spatially dependent intracellular signalling components,  such as the well-established Wnt-Notch interactions that have been observed in both intestinal and mammary epithelia \cite{collu2014wnt}.  Both the Wnt and Notch pathways are involved in cell-fate determination and have been observed to have active apical-basal polarity mechanisms during tissue development \cite{sjoqvist2019say,bertrand2016beta}.  However existing models have previously been limited to analysis of one or two cells \cite{kay2017role,agur2011dickkopf1},  the methods we provide here allow us to study how the geometry of the tissue influences such cell-fate choices,  specifically within the bilayer structures commonly found in mammary glands.

\section*{Acknowledgements}
JWM is supported by Knowledge Economy Skills Scholarships (KESS2), a pan-Wales higher-level skills initiative led by Bangor University on behalf of the Higher Education sector in Wales. It is part-funded by the Welsh Government’s European Social Fund (ESF).

\bibliographystyle{unsrt}
\bibliography{polar_patterns_mixed_D2}

\appendix

\section{Additional properties of interwoven matrices}\label{Appendix:Interweave}

Here,  we present further properties of interwoven matrices which have particular applications in dynamical network theory of mixed kernels.  Specifically,  an interwoven matrix $\bm{P}$ is composed of the sequence of real matrices $\bm{\mathcal{M}} = \{ \bm{M}_{i} \,  | \,   \bm{M}_{i} \in \R^{N \times N} \}$ called constructor matrices such that the rows and columns of each matrix $\bm{M}_{i}$ are uniformly separated by zero elements,  preserving the structure of $\bm{M}_{i}$,  where the order of $\bm{\mathcal{M}}$ defines the sequence of spacing.  Formally we define an interwoven matrix $\bm{P}$ by 
\begin{equation}\label{eqn: interwoven_Q}
\bm{P} = \sum_{i = 1}^{r} \bm{M}_{i} \otimes \text{diag} \lp \delta_{i,1},...,\delta_{i,r} \rp,
\end{equation}
where $\otimes$ is the Kronecker product and $\delta_{i,j}$ is the Kronecker delta function (\ref{eqn:Kron_delta}) as defined in Section \ref{sec:model_definition}.  However,  here we do not assume that the constructor matrices $\bm{M}_{i}$ are nonnegative.   \par
The following result for the diagonal decomposition of the $\bm{P}$.

\begin{lemma}\label{lemma:permutation}
Let $\bm{P}$ be the interwoven matrix defined as defined in equation\autoref{eqn: interwoven_Q} and let $\bm{Q}$  be the permutation matrix such that
 \begin{equation}
 \bm{Q} = [  \bm{I}_{N} \otimes \bm{e}_{1} , ...,     \bm{I}_{N} \otimes \bm{e}_{r} ].
 \end{equation}
 where $\bm{e}_{i} =  \begin{bmatrix} \delta_{i,1}, \,  \cdots,  \,  \delta_{i,r}\end{bmatrix}^{T}$.  Then
 \begin{equation}
 \bm{Q}^{T} \bm{P} \bm{Q} = \bigoplus_{i= 1}^{r} \bm{M}_{i},
 \end{equation}
where $\oplus$ is the direct sum of tensors.

\end{lemma}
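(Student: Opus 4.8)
The plan is to reduce the whole statement to the mixed-product property of the Kronecker product together with the orthonormality of the standard basis vectors $\bm{e}_1,\dots,\bm{e}_r \in \R^r$. First I would rewrite the inner diagonal factor in rank-one form: since $\text{diag}\lp \delta_{i,1},\dots,\delta_{i,r}\rp = \bm{e}_i \bm{e}_i^T$, the interwoven matrix becomes
\begin{equation}
\bm{P} = \sum_{i=1}^{r} \bm{M}_i \otimes \bm{e}_i \bm{e}_i^T .
\end{equation}
This writes each summand as a Kronecker product of two factors, which is exactly the form required to apply the mixed-product rule $\lp \bm{A} \otimes \bm{B} \rp \lp \bm{C} \otimes \bm{D} \rp = \bm{AC} \otimes \bm{BD}$.

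Next I would read $\bm{Q}$ off blockwise: its $k$-th block column is $\bm{I}_N \otimes \bm{e}_k$ (of size $rN \times N$), so the $j$-th block row of $\bm{Q}^T$ is $\bm{I}_N \otimes \bm{e}_j^T$. The strategy is then to compute the $(j,k)$ block of the conjugated matrix directly,
\begin{equation}
\lp \bm{Q}^T \bm{P} \bm{Q} \rp_{jk} = \lp \bm{I}_N \otimes \bm{e}_j^T \rp \lp \sum_{i=1}^{r} \bm{M}_i \otimes \bm{e}_i \bm{e}_i^T \rp \lp \bm{I}_N \otimes \bm{e}_k \rp .
\end{equation}
Applying the mixed-product property term by term collapses each summand to $\bm{M}_i \otimes \lp \bm{e}_j^T \bm{e}_i \rp \lp \bm{e}_i^T \bm{e}_k \rp = \lp \delta_{ji} \delta_{ik} \rp \bm{M}_i$, using $\bm{I}_N \bm{M}_i \bm{I}_N = \bm{M}_i$ and the fact that the scalars $\bm{e}_j^T \bm{e}_i = \delta_{ji}$ pull out of the tensor. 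Summing over $i$ leaves only the index $i = j = k$, so the $(j,k)$ block equals $\delta_{jk}\bm{M}_j$.

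Finally I would conclude that $\bm{Q}^T \bm{P} \bm{Q}$ is block diagonal with the constructor matrices $\bm{M}_1,\dots,\bm{M}_r$ down the diagonal, which is precisely $\bigoplus_{i=1}^{r}\bm{M}_i$. For completeness I would note that $\bm{Q}$ is genuinely a permutation matrix, so the conjugation is an honest similarity: the identical block computation gives $\lp \bm{Q}^T \bm{Q}\rp_{jk} = \bm{I}_N \otimes \lp \bm{e}_j^T \bm{e}_k \rp = \delta_{jk}\bm{I}_N$, whence $\bm{Q}^T\bm{Q} = \bm{I}_{rN}$. I do not expect a substantive obstacle here — the argument is two applications of the mixed-product identity — and the only care needed is clean bookkeeping of the block indices and correctly recognising the diagonal factor as the outer product $\bm{e}_i\bm{e}_i^T$; once that identification is made the statement falls out immediately.
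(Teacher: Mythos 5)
Your proof is correct, and it takes a different route from the paper's. The paper proceeds combinatorially: it writes down the explicit permutation $\tau(x) = ((x-1)\bmod r)N + \lfloor (x-1)/r\rfloor + 1$, exhibits it in cycle notation to show that the interleaved rows and columns belonging to each $\bm{M}_i$ are made contiguous, and then asserts that conjugation by the corresponding permutation matrix $\bm{Q}$ yields $\text{diag}(\bm{M}_1,\dots,\bm{M}_r)$. You instead avoid all index bookkeeping by observing that $\bm{D}_i = \bm{e}_i\bm{e}_i^T$ and computing the $(j,k)$ block of $\bm{Q}^T\bm{P}\bm{Q}$ directly via the mixed-product rule, so that orthonormality of the $\bm{e}_i$ collapses the sum to $\delta_{jk}\bm{M}_j$. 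Your version is arguably tighter: the paper's final step (that the permutation really does produce the block-diagonal form) is stated rather than computed, whereas your block calculation verifies it explicitly, and you also check $\bm{Q}^T\bm{Q} = \bm{I}_{rN}$ so the conjugation is a genuine similarity. What the paper's approach buys in exchange is an explicit description of the underlying vertex re-indexing, which is the intuition reused elsewhere (e.g.\ in the biadjacency reordering of Appendix C); your approach buys a two-line algebraic proof whose only inputs are the mixed-product identity and $\bm{e}_j^T\bm{e}_i = \delta_{ji}$. No gaps.
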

\begin{proof}
Consider the permutation map $\tau: \{1,...,  rN\} \rightarrow \{1,...,rN \}$ such that
\begin{equation}\label{eqn:tau_map}
\tau \lp x \rp = \lp \lp x-1 \rp \,  \text{mod}\,  r  \rp N + \left\lfloor\dfrac{x-1}{r}\right\rfloor + 1
\end{equation}
which permutes the rows and columns of $\bm{P}$ so any row and columns $\bm{M}_{i}$ of $\bm{P}$ become adjacent for $1 \leq i \leq r $.  In cycle notation,  $\tau \lp x \rp $ defines the mapping
\begin{equation}\label{eqn:cycle_perm}
\begin{pmatrix}
1 & 2& \cdots & r & r+1 & \cdots & rN \\
1 & N + 1 &  \cdots & (r-1)N + 1 & 2 & \cdots & (r-1)N + N \\
\end{pmatrix}
\end{equation}
which represents the column and row permutation of $\bm{P}$.  The cycle (\ref{eqn:cycle_perm}) defined by equation (\ref{eqn:tau_map}) yields the following matrix representation 
\begin{equation}
\bm{Q} = [  \bm{I}_{N} \otimes \bm{e}_{1} , ...,     \bm{I}_{N} \otimes \bm{e}_{r} ],
\end{equation}
namely,  $ \bm{Q} _{i, \tau\lp i  \rp} = 1$ and zero entries else.  Therefore applying the transformation $\bm{Q}$ to $\bm{P}$ produces the block diagonal representation where
\begin{equation}
 \bm{Q}^{T} \bm{P} \bm{Q} = \text{diag}\lp \bm{M}_{1},...,\bm{M}_{r} \rp
\end{equation}
which is by definition the direct sum of matrices $\bm{M}_{i}$.
\end{proof}

The block diagonal representation of $\bm{P}$ following from \autoref{lemma:permutation} motivates the subsequent properties involving the spectra and inverse of the interwoven matrix $\bm{P}$.

\begin{lemma} \label{lemma:interweave_spec}
Let $\bm{P}$ be the interwoven matrix as defined in equation (\ref{eqn: interwoven_Q}).  Then $\bm{P}$ has the following properties:
\begin{enumerate}[label=(\roman*)]
\item  \begin{equation}
\Sp \lp \bm{P} \rp   =  \bigcup_{i = 1}^{r} \Sp \lp \bm{M}_{i} \rp
\end{equation} 
including multiplicities;

\item if $\bm{M}_{i}$ is invertible for all $1 \leq i \leq k $,  then the inverse of the interwoven matrix $\bm{P}$ is the interweave of the inverse of the construction matrices. That is,
\begin{equation}
\bm{P}^{-1} = \sum_{i = 1}^{r} \bm{M}_{i}^{-1} \otimes \text{diag} \lp \delta_{i,1},...,\delta_{i,r} \rp,
\end{equation}
\item the trace of the interwoven matrix is the sum of the traces of the constructor matrices
\begin{equation}
\Tr \lp \bm{P} \rp = \sum_{i=1}^{r} \Tr \lp  \bm{M}_{i }\rp,
\end{equation}
\item the determinant of the interwoven matrix is the product of the determinant of the constructor matrices
\begin{equation}
\det \lp \bm{P} \rp = \prod_{i=1}^{r} \det \lp  \bm{M}_{i }\rp.
\end{equation}
\end{enumerate}

\end{lemma}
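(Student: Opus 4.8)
The plan is to derive all four properties as immediate consequences of the block-diagonalisation established in Lemma~\ref{lemma:permutation}. That result produces a permutation matrix $\bm{Q}$ with $\bm{Q}^{T}\bm{P}\bm{Q} = \bigoplus_{i=1}^{r}\bm{M}_{i}$; since $\bm{Q}$ is a permutation matrix it is orthogonal, so $\bm{Q}^{-1}=\bm{Q}^{T}$ and $\bm{P}$ is \emph{similar} to the block-diagonal matrix $\bigoplus_{i=1}^{r}\bm{M}_{i}$. Each property then reduces to a standard fact about block-diagonal matrices combined with similarity-invariance, so I would simply record which standard fact is used in each case rather than recomputing anything about $\bm{P}$ directly.

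For (i) I would use that the characteristic polynomial of a block-diagonal matrix factorises as the product of the characteristic polynomials of its diagonal blocks, so $\Sp\lp\bigoplus_{i}\bm{M}_{i}\rp=\bigcup_{i}\Sp\lp\bm{M}_{i}\rp$ counting multiplicities; as similarity preserves the characteristic polynomial, $\Sp\lp\bm{P}\rp$ equals the same union with multiplicities. For (iii) and (iv) I would invoke that trace and determinant are similarity invariants (for the determinant, $\det\bm{Q}^{T}\det\bm{Q}=1$), together with the block identities $\Tr\lp\bigoplus_{i}\bm{M}_{i}\rp=\sum_{i}\Tr\lp\bm{M}_{i}\rp$ and $\det\lp\bigoplus_{i}\bm{M}_{i}\rp=\prod_{i}\det\lp\bm{M}_{i}\rp$, both immediate from the block structure.

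The one step requiring slightly more care is (ii), the inverse. Assuming each $\bm{M}_{i}$ is invertible, the block-diagonal matrix is invertible with $\lp\bigoplus_{i}\bm{M}_{i}\rp^{-1}=\bigoplus_{i}\bm{M}_{i}^{-1}$, so from $\bm{P}=\bm{Q}\lp\bigoplus_{i}\bm{M}_{i}\rp\bm{Q}^{T}$ I obtain $\bm{P}^{-1}=\bm{Q}\lp\bigoplus_{i}\bm{M}_{i}^{-1}\rp\bm{Q}^{T}$. To identify this with the claimed interweave $\sum_{i}\bm{M}_{i}^{-1}\otimes\text{diag}\lp\delta_{i,1},\dots,\delta_{i,r}\rp$, I would re-apply Lemma~\ref{lemma:permutation} to the constructor family $\{\bm{M}_{i}^{-1}\}$: the lemma places no positivity (indeed no structural) hypothesis on the constructors beyond being real square matrices, so it applies verbatim and yields $\bm{Q}^{T}\lp\sum_{i}\bm{M}_{i}^{-1}\otimes\text{diag}\lp\delta_{i,1},\dots,\delta_{i,r}\rp\rp\bm{Q}=\bigoplus_{i}\bm{M}_{i}^{-1}$. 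Conjugating back by the same fixed $\bm{Q}$ shows the interweave of the inverses coincides with $\bm{P}^{-1}$.

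I expect no genuine obstacle: the whole statement is a corollary of the structural identity in Lemma~\ref{lemma:permutation}. The only mild subtlety is in (ii), where one must ensure the permutation lemma is used in both directions with the \emph{same} $\bm{Q}$, so that the interweave of inverses is matched to $\bm{P}^{-1}$ itself rather than merely being shown similar to $\bigoplus_{i}\bm{M}_{i}^{-1}$.
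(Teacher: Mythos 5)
Your proposal is correct, but it takes a genuinely different route from the paper. You derive all four properties as corollaries of the block-diagonalisation $\bm{Q}^{T}\bm{P}\bm{Q}=\bigoplus_{i}\bm{M}_{i}$ from Lemma \ref{lemma:permutation}, using similarity invariance of spectrum, trace and determinant together with standard block-diagonal identities; the paper, by contrast, states that the block-diagonal form merely \emph{motivates} the properties and then proves each one by direct Kronecker-product computation: (i) by exhibiting the lifted eigenvectors $\tilde{\bm{v}}_{k,j}=\bm{v}_{k,j}\otimes\bm{e}_{k}$ and verifying $\bm{P}\tilde{\bm{v}}_{k,j}=\lambda_{k,j}\tilde{\bm{v}}_{k,j}$ via the mixed-product property, (ii) by multiplying the two interweaves and using $\bm{D}_{i}\bm{D}_{j}=\delta_{ij}\bm{D}_{i}$, (iii) from $\Tr\lp\bm{M}_{i}\otimes\bm{D}_{i}\rp=\Tr\lp\bm{M}_{i}\rp\Tr\lp\bm{D}_{i}\rp$, and (iv) from (i). Your version is more unified and arguably cleaner, and your handling of (ii) is sound: the permutation $\bm{Q}$ of Lemma \ref{lemma:permutation} depends only on $N$ and $r$, and the lemma imposes no positivity on the constructors, so applying it to $\{\bm{M}_{i}^{-1}\}$ with the same $\bm{Q}$ and conjugating back is exactly the right way to identify $\bm{P}^{-1}$ with the interweave of inverses rather than something merely similar to it. What the paper's computational route buys is the explicit form of the eigenvectors of $\bm{P}$ as Kronecker lifts of the eigenvectors of the $\bm{M}_{i}$, which your similarity argument does not produce; what your route buys is that all four statements fall out of a single structural identity with essentially no further calculation.
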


\begin{proof}
Let $\lambda_{k,j} \in \Sp \lp \bm{M}_{k} \rp$ with its associated eigenvector $\bm{v}_{k,j}$.  Then define the interweave extension of $\bm{v}_{k,j}$ by 
\begin{equation}
\tilde{\bm{v}}_{k,j} = \bm{v}_{k,j} \otimes \begin{bmatrix} 
 \delta_{j,1}\\
 \vdots\\
  \delta_{j,r}
\end{bmatrix}.
\end{equation} 
For brevity,  denote the Kronecker diagonal matrix by $\bm{D}_{i} = \text{diag} \lp \delta_{i,1},...,\delta_{i,r} \rp$ and then by direct computation we have
\begin{align}
\bm{P} \tilde{\bm{v}}_{k,j}  =& \lp  \sum_{i = 1}^{r} \bm{M}_{i} \otimes \bm{D}_{i}   \rp  \tilde{\bm{v}}_{k,j}, \nonumber \\
=& \lp  \sum_{i = 1}^{r} \bm{M}_{i} \otimes \bm{D}_{i}  \rp   \lp   \bm{v}_{j}^{k} \otimes \begin{bmatrix} \delta_{j,1}, \,  \cdots,  \,  \delta_{j,r}\end{bmatrix}^{T}   \rp  , \nonumber\\
=&  \lp  \sum_{i = 1}^{r} \bm{M}_{i} \bm{v}_{k,j} \otimes \bm{D}_{i} \begin{bmatrix} \delta_{j,1}, \,  \cdots,  \,  \delta_{j,r}\end{bmatrix}^{T}   \rp , \nonumber \\
=& \,  \bm{M}_{j} \bm{v}_{k,j} \otimes   \begin{bmatrix} \delta_{j,1}, \,  \cdots,  \,  \delta_{j,r}\end{bmatrix}^{T} ,
\end{align}
where the last two equalities follow from the mixed product property of the Kronecker product and that direct multiplication of the Kronecker matrix and vector are non-zero only if $i = j$.  Therefore we have that 
\begin{equation}
\bm{P} \tilde{\bm{v}}_{k,j}  = \bm{M}_{j} \bm{v}_{k,j} \otimes   \begin{bmatrix} \delta_{j,1}, \,  \cdots,  \,  \delta_{j,r}\end{bmatrix}^{T}  = \lambda_{k,j}  \bm{v}_{k,j} \otimes  \begin{bmatrix} \delta_{j,1}, \,  \cdots,  \,  \delta_{j,r}\end{bmatrix}^{T} = \lambda_{k,j} \tilde{\bm{v}}_{k,j} ,
\end{equation}
thus $ \lambda_{k,j} $ is an eigenvalue of $\bm{P}$ with associated eigenvector $\tilde{\bm{v}}_{k,j} $.  \par
Next,  there exists $\bm{M}_{i}^{-1}$ for all $1 \leq i \leq r$ from the assumption in (ii).  Note that 
\begin{equation}\label{kronecker_mat_prop}
\bm{D}_{i} \bm{D}_{j} = \left\{
	\begin{array}{ll}
		\bm{D}_{i}   &  i = j, \\
		\bm{0}_{r \times r} &   i \neq j,
	\end{array}
\right. 
\end{equation} 
then consider the following matrix $\bm{R}$ defined by the multiplication
\begin{align}
\bm{R} =& \lp  \sum_{i = 1}^{r} \bm{M}_{i} \otimes \bm{D}_{i} \rp \lp  \sum_{i = 1}^{r} \bm{M}_{i}^{-1} \otimes \bm{D}_{i} \rp, \nonumber \\
=& \lp \bm{M}_{1} \otimes \bm{D}_{1} \rp \lp  \sum_{i = 1}^{r} \bm{M}_{i}^{-1} \otimes \bm{D}_{i} \rp + ...  +  \lp \bm{M}_{r} \otimes \bm{D}_{r} \rp \lp  \sum_{i = 1}^{r} \bm{M}_{i}^{-1} \otimes \bm{D}_{i} \rp \label{inverse_comp} .
\end{align}
From the mixed-product property of the Kronecker product and equation\autoref{kronecker_mat_prop},  we have that\autoref{inverse_comp} reduces to 
\begin{align}
\bm{R} =& \,  \bm{M}_{1}    \bm{M} _{1}^{-1} \otimes \bm{D}_{1} + ... + \bm{M}_{r}    \bm{M} _{r}^{-1} \otimes \bm{D}_{r} , \nonumber \\
=& \sum_{i = 1} ^{r} \bm{I}_{n} \otimes \bm{D}_{i}  = \bm{I}_{rN} ,
\end{align}
hence the inverse of $\bm{P}$ is given by $\bm{P}^{-1} =  \sum_{i = 1}^{r} \bm{M}_{i}^{-1} \otimes \bm{D}_{i}  $ as required for (ii).  \par

The trace of a Kronecker product is the product of the trace \cite{hardy2019matrix} such that $\Tr \lp \bm{M}_{i}  \otimes \bm{D}_{i}\rp = \Tr \lp \bm{M}_{i} \rp \Tr \lp \bm{D}_{i} \rp$.  Therefore applying the trace to the definition of $\bm{P}$ (\ref{eqn: interwoven_Q}) yields
\begin{align}
\Tr \lp  \bm{P} \rp &= \Tr \lp \sum_{i= 1}^{r} \bm{M}_{i} \otimes \bm{D}_{i} \rp = \sum_{i= 1}^{r} \Tr \lp \bm{M}_{i} \otimes \bm{D}_{i} \rp = \sum_{i= 1}^{r} \Tr \lp \bm{M}_{i}  \rp \Tr \lp \bm{D}_{i} \rp = \sum_{i= 1}^{r} \Tr \lp \bm{M}_{i}  \rp ,
\end{align}
where the second equality holds by the trace of the sum of matrices \cite{hardy2019matrix} and the fourth holds by $\Tr \lp \bm{D}_{i} \rp = 1$ for all $1 \leq i \leq r$.  \par

Property (iv) follows immediately from (i) by expressing the determinant of a matrix as the product of the eigenvalues including multiplicities \cite{hardy2019matrix}.  From (i) we have that $ \Sp \lp \bm{P} \rp   =  \Sp \lp \bm{M}_{1} \rp \cup ...\cup \Sp \lp \bm{M}_{r} \rp$ including multiplicities and so we know the eigenvalues of $\bm{P}$ are all the eigenvalues of each $\bm{M}_{i}$.  Subsequently,  the determinant of $\bm{P}$ must be the product of all these eigenvalues which leads to the required representation

\begin{equation}
\det{ \lp \bm{P} \rp} = \lp \prod_{j=1}^{N} \lambda_{1,j} \rp... \lp \prod_{j=1}^{N} \lambda_{r,j} \rp = \det \lp \bm{M}_{1} \rp... \det \lp \bm{M}_{r} \rp = \prod_{i= 1}^{r} \det \lp \bm{M}_{i} \rp.
\end{equation}

\end{proof}
A direct consequence of \autoref{lemma:interweave_spec} is that if $\bm{M}_{j}$ are nonnegative,  then the spectral radius, $ \rho$,  of interwoven matrix $\bm{P}$ is a real eigenvalue and is defined by
\begin{equation}
\rho \lp \bm{P} \rp = \max_{j} \lp  \rho \lp\bm{M}_{j}  \rp \rp  =  \max \lp  \bigcup_{i = 1}^{r} \Sp \lp \bm{M}_{i} \rp  \rp
\end{equation}
from the Perron-Frobenius theorem for nonnegative matrices \cite{smith2008monotone}. \par

In addition to its spectral properties,  the interwoven matrix (\ref{eqn: interwoven_Q}) also has the following exponent property.
\begin{lemma}\label{appendB:expo_lemma}
Let $\bm{P}$ be the interwoven matrix defined as defined in equation (\ref{eqn: interwoven_Q}).  Then for all $n \in \N$
\begin{equation} \label{eqn:induction_equation}
\bm{P}^{n} =  \sum_{i = 1}^{r} \bm{M}_{i}^{n} \otimes \bm{D}_{i}.
\end{equation} 
\end{lemma}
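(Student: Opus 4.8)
The plan is to prove equation (\ref{eqn:induction_equation}) by induction on $n$, relying entirely on two algebraic facts: the mixed-product property of the Kronecker product \cite{hardy2019matrix}, namely $\lp \bm{A} \otimes \bm{B} \rp \lp \bm{C} \otimes \bm{D} \rp = \bm{A}\bm{C} \otimes \bm{B}\bm{D}$, and the orthogonality relation for the Kronecker delta matrices $\bm{D}_{i}\bm{D}_{j} = \delta_{ij} \bm{D}_{i}$ already recorded in equation (\ref{kronecker_mat_prop}). The base case $n=1$ is immediate, since it is exactly the defining expression (\ref{eqn: interwoven_Q}) of the interwoven matrix $\bm{P}$.

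For the inductive step, I would assume the claim holds for some $n \in \N$, so that $\bm{P}^{n} = \sum_{i=1}^{r} \bm{M}_{i}^{n} \otimes \bm{D}_{i}$, and then compute $\bm{P}^{n+1} = \bm{P}^{n}\bm{P}$ by expanding the product of the two sums. Writing
\begin{equation}
\bm{P}^{n+1} = \lp \sum_{i=1}^{r} \bm{M}_{i}^{n} \otimes \bm{D}_{i} \rp \lp \sum_{j=1}^{r} \bm{M}_{j} \otimes \bm{D}_{j} \rp = \sum_{i=1}^{r} \sum_{j=1}^{r} \lp \bm{M}_{i}^{n} \bm{M}_{j} \rp \otimes \lp \bm{D}_{i} \bm{D}_{j} \rp,
\end{equation}
where the second equality is the mixed-product property applied term by term. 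Invoking the orthogonality relation (\ref{kronecker_mat_prop}), every cross term with $i \neq j$ vanishes because $\bm{D}_{i}\bm{D}_{j} = \bm{0}$, and the diagonal terms simplify via $\bm{D}_{i}\bm{D}_{i} = \bm{D}_{i}$ and $\bm{M}_{i}^{n}\bm{M}_{i} = \bm{M}_{i}^{n+1}$, collapsing the double sum to $\sum_{i=1}^{r} \bm{M}_{i}^{n+1} \otimes \bm{D}_{i}$. This completes the inductive step and hence the proof.

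There is essentially no genuine obstacle here; the only point requiring a little care is the bookkeeping in the collapse of the double sum, and ensuring the mixed-product property is applied before the orthogonality relation so that the $\bm{D}_{i}\bm{D}_{j}$ factors are isolated. Since the $\bm{D}_{i}$ form a complete orthogonal family of idempotents (indeed $\sum_{i} \bm{D}_{i} = \bm{I}_{r}$), the interwoven powers decouple layerwise exactly as the diagonal blocks $\bm{M}_{i}$ do under the permutation of Lemma \ref{lemma:permutation}, which offers an alternative one-line proof: conjugating by $\bm{Q}$ gives $\bm{Q}^{T}\bm{P}^{n}\bm{Q} = \lp \bm{Q}^{T}\bm{P}\bm{Q} \rp^{n} = \bigoplus_{i=1}^{r} \bm{M}_{i}^{n}$, from which (\ref{eqn:induction_equation}) follows by conjugating back. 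I would present the induction as the primary argument for self-containedness and note the block-diagonal route as a remark.
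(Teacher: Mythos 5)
Your proof is correct and follows essentially the same route as the paper: induction on $n$, with the inductive step handled by expanding the product of the two sums via the mixed-product property of the Kronecker product and then collapsing the double sum using the orthogonality relation (\ref{kronecker_mat_prop}). The alternative block-diagonal argument via Lemma \ref{lemma:permutation} that you mention as a remark is a valid and arguably cleaner shortcut, but it is not the path the paper takes.
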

\begin{proof}
The result follows by induction.  Assume for some $k \in \N$ that equation\autoref{eqn:induction_equation} holds.  Consider the case for $k+1$,
\begin{align}
\lp  \sum_{i = 1}^{r}  \bm{M}_{i} \otimes \bm{D}_{i}   \rp^{k+1} &= \lp \sum_{i = 1}^{r}  \bm{M}_{i} \otimes \bm{D}_{i}  \rp \lp \sum_{i = 1}^{r}  \bm{M}_{i} \otimes \bm{D}_{i} \rp ^{k},  \nonumber \\
&=  \lp \sum_{i = 1}^{r}  \bm{M}_{i} \otimes \bm{D}_{i}  \rp  \lp \sum_{i = 1}^{r}  \bm{M}_{i}^{k} \otimes \bm{D}_{i} \rp,  \label{eqn:hyp_induction}
\end{align}
where the second equality follows from the inductive hypothesis.  Applying the multiplication property of the of Kronecker matrix\autoref{kronecker_mat_prop},  expansion of equation\autoref{eqn:hyp_induction} and the mixed-product property of the Kronecker product leads to the following cancellations,
\begin{align}
 \lp \sum_{i = 1}^{r}  \bm{M}_{i} \otimes \bm{D}_{i}  \rp  \lp \sum_{i = 1}^{r}  \bm{M}_{i}^{k} \otimes \bm{D}_{i} \rp &= \lp \bm{M}_{1} \otimes \bm{D}_{1} \rp\lp \sum_{i = 1}^{r}  \bm{M}_{i}^{k} \otimes \bm{D}_{i} \rp  + \cdots + \lp \bm{M}_{r} \otimes \bm{D}_{r} \rp\lp \sum_{i = 1}^{r}  \bm{M}_{i}^{k} \otimes \bm{D}_{i} \rp,  \nonumber \\
 &= \bm{M}_{1} \bm{M}_{1}^{k} \otimes \bm{D}_{1} + \cdots +  \bm{M}_{r} \bm{M}_{r}^{k} \otimes \bm{D}_{r} , \nonumber \\
  &=  \sum_{i = 1}^{r} \bm{M}_{i}^{k+1} \otimes \bm{D}_{i} . 
\end{align}
That is,  the inductive hypothesis is satisfied and thus equation (\ref{eqn:induction_equation}) holds for all $n \in \N$ by the principle of induction.

\end{proof}

\section{Computational methods}\label{sec:ODE_methods}
The ODE systems in this study were solved numerically using the \texttt{ODE15s} solver in Matlab (R2021a).  Simulations were performed over a total of 1000 time units in addition to an stopping event applied to the ODE solver to check for solution convergence.  Namely,  if all trajectories varied less than $1 \times 10^{-4}$ over four consecutive iterations,  then we assume that the system has converged to a steady state.  We note that all simulations presented in this study satisfied the convergence criteria.  The intracellular kinetics parameter values of the IO system (\ref{example_io_system_1}-\ref{example_io_system_2}) used in all simulations are given in Table \ref{tab:example_io_parameters} below. \par

\begin{table}[h]
\begin{tabular}{|l|c|c|c|c|c|c|c|c|c|c|}
\hline
\textbf{Parameter} & $a_{1}$ & $a_{2}$ & $b_{1}$ & $b_{2}$ & $b_{3}$ & $k_{1}$ & $k_{2}$ & $h_{1}$ & $h_{2}$ & $h_{3}$ \\ \hline
\textbf{Value}     & 0.01    & 1       & 100     & 100     & 100     & 2       & 2       & 2       & 2       & 1       \\ \hline
\end{tabular}
\vspace{1em}
\caption{Parameter values used in the simulations of the IO system (\ref{example_io_system_1}-\ref{example_io_system_2}). }\label{tab:example_io_parameters}
\end{table}

Random initial conditions were sampled from a uniform distribution using the \texttt{rand} function.  The homogeneous steady state of the system was calculated using the \texttt{fsolve} function that implements the trust-region-dogleg minimisation algorithm \cite{chapra2012applied}.  In addition,  heterotypic weighting parameters was set to $w_{2}^{[k]} = 1$ for all simulations.  Both quotient and large-scale ODE systems where solved using the same kinetics functions where respective adjacency matrices were introduced as an argument to these kinetics functions to ensure solution consistency.\par 

To visualise the approximate cell membranes in the large-scale simulation Voronoi diagrams were drawn around graph vertices using the \texttt{delaunayTriangulation} and \texttt{voronoi} functions within the Computational Geometry toolbox in Matlab (R2021a).  Ghost vertices were introduced to ensure that each graph vertex has a closed boundary.\par 

Eigenvalues of the adjacency matrices were calculated using the \texttt{eig} function from the Linear Algebra toolbox in Matlab (R2021a).  The edge structures of the semi-regular non-bipartite graphs used in the numerical spectral investigation are given in Table \ref{tab:non_bi_summary}.  These graphs were confirmed non-bipartite by violating the spectral symmetry property of bipartite graphs. \par
\begin{table}[h]
\begin{tabular}{|c|c|c|c|c|}
 \hline
&  $n_{1,\cL_{1}}$& $n_{2,\cL_{1}}$ &$n_{1,\cL_{2}}$  &  $n_{2,\cL_{2}}$ \\ \hline
  $\cG_{1}$ & 2 & 2 &2  &2  \\
  $\cG_{2}$ &  2&3  & 2 &3  \\
  $\cG_{3}$ &  2&  4&  2&4  \\
 $\cG_{4}$  & 4 & 3 & 4 & 3 \\
 \hline
\end{tabular}
\vspace{1em}
\caption{Summary of the bilayer edge connectivity structures for the graphs used in the numerical investigation of non-bipartite spectra in Figure \ref{fig:nonbipartite_example}.}\label{tab:non_bi_summary}
\end{table}

Source code for the simulations presented in this study can be found at \url{https://github.com/joshwillmoore1/Mixed_Signal_mechanisms}.

\section{Proof of $\pi_{2}$-dependent spectral gap for $\cG_{\text{2D}}$ and $\cG_{\text{3D}}$ from Figure \ref{fig:bipartite_example}B}\label{Appendix:spec_gap}

Bipartite graphs have many particularly convenient algebraic properties due to the existence of a simple canonical form of the respective adjacency matrix.  Namely,  for any bipartite graph $\cG_{k}$ with adjacency matrix $\bm{W}_{k}$ there exists a permutation matrix $\bm{U}$ that re-indexes the vertices with respect to the independent sets $V_{1}$ and $V_{2}$ such that
\begin{equation}
\bm{U}^{T} \bm{W}_{k} \bm{U}  =\begin{bmatrix}
\bm{0} & \bm{X}_{k} \\
\bm{X}_{k}^{T} & \bm{0} 
\end{bmatrix},
\end{equation}
where $\bm{X}_{k}$ is the biadjacency matrix \cite{godsil2001algebraicBook}.  Subsequently,  the spectra of the bipartite graphs have a distinct structure such that there is a symmetries of eigenvalues respective to the biadjacency matrices,  i.e.,  $\Sp \lp \bm{W}_{k} \rp = \Sp \lp \bm{X}_{k} \rp \cup \Sp \lp - \bm{X}_{k} \rp$.  
Leveraging the spectral symmetry of bipartite graphs and the spectral retention of equitable partitions,  we demonstrate that for the bipartite bilayer graphs $\cG_{\text{2D}}$ and $\cG_{\text{3D}}$ in Figure \ref{fig:bipartite_example},  the smallest eigenvalue of $\bm{X}_{k}$ is $-\overline{\lambda}_{k,2}$,  the polarity driven eigenvalue associated with laminar pattern template $\pi_{2}$.
 
\begin{lemma}\label{lemma: biadj_lower_bound}
Let $\cG_{k}$ be a regular bipartite bilayer graph with associated row-stochastic weighted adjacency matrix $\bm{W}_{k}$ (\ref{eqn:w_block_form}) for 2D or 3D tissues as shown in Figure \ref{fig:bipartite_example} ($k = \text{2D},\text{3D}$).  Consider the equitable partition $\pi_{2}$ such that the quotient graph,  $\cG_{k,\pi_{2}}$,  consists of only two representative vertices in each layer of $\cG_{k}$ and has the reduced adjacency matrix $\overline{\bm{W}}_{k}$ (\ref{eqn:reduce_adj_mat}).  Then biadjacency matrix $\bm{X}_{k}$ associated with $\bm{W}_{k}$ satisfies
\begin{equation}
-\overline{\lambda}_{k,2} = \min \lp \mu_{k,j} : \mu_{k,j} \in  \Sp \lp \bm{X}_{k} \rp \rp,
\end{equation}
where $\overline{\lambda}_{k,2}$ is the smallest eigenvalue of $\overline{\bm{W}}_{k}$ with associated eigenvector $\overline{\bm{v}}_{k,2}$.
\end{lemma}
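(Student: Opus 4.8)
The plan is to work in the bipartite canonical form recalled at the start of this appendix, in which $\bm{W}_{k}$ is permutation-similar to $\begin{bmatrix} \bm{0} & \bm{X}_{k} \\ \bm{X}_{k}^{T} & \bm{0} \end{bmatrix}$ and $\Sp \lp \bm{W}_{k} \rp = \Sp \lp \bm{X}_{k} \rp \cup \Sp \lp -\bm{X}_{k} \rp$. For the regular bilayer graphs $\cG_{\text{2D}}$ and $\cG_{\text{3D}}$ the two independent sets $V_{1},V_{2}$ have equal cardinality, so $\bm{X}_{k}$ is square; the layer symmetry supplies a natural bijection between $V_{1}$ and $V_{2}$ under which $\bm{X}_{k}$ is symmetric, and since $\bm{W}_{k}$ is row-stochastic the blocks force $\bm{X}_{k}$ to be doubly stochastic. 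Hence $\Sp \lp \bm{X}_{k} \rp \subset [-1,1]$ with $\max \lp \Sp \lp \bm{X}_{k} \rp \rp = 1$, attained at $\bm{1}$. First I would record these normalisations, since they pin the top of the spectrum for free and reduce the lemma to controlling the bottom of $\Sp \lp \bm{X}_{k} \rp$.

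Next I would show $-\overline{\lambda}_{k,2} \in \Sp \lp \bm{X}_{k} \rp$. By the spectral retention of the equitable partition $\pi_{2}$ we have $\overline{\lambda}_{k,2} \in \Sp \lp \bm{W}_{k} \rp$, and the lifted eigenvector $\bm{L}\overline{\bm{v}}_{k,2}$ is constant on each layer with opposing signs, i.e. $+$ on $\cL_{1}$ and $-$ on $\cL_{2}$. Multiplying this vector entrywise by the bipartition sign vector ($+1$ on $V_{1}$, $-1$ on $V_{2}$) produces, by the bipartite spectral symmetry, an eigenvector of $\bm{W}_{k}$ for $-\overline{\lambda}_{k,2}$; crucially the resulting vector is constant on each of the four cells of the common refinement of $\pi_{2}$ and the bipartition. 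Writing it as $\lp \bm{a},\bm{b} \rp$ in the $V_{1},V_{2}$ blocks, the eigenrelations $\bm{X}_{k}\bm{b} = -\overline{\lambda}_{k,2}\,\bm{a}$ and $\bm{X}_{k}^{T}\bm{a} = -\overline{\lambda}_{k,2}\,\bm{b}$ together with symmetry of $\bm{X}_{k}$ give $\bm{X}_{k}\lp \bm{a}\pm\bm{b} \rp = \mp\overline{\lambda}_{k,2}\lp \bm{a}\pm\bm{b} \rp$, so at least one of the numbers $\pm\overline{\lambda}_{k,2}$ is an eigenvalue of $\bm{X}_{k}$; the minimality step below will confirm it is the positive branch $-\overline{\lambda}_{k,2}$ (recall $\overline{\lambda}_{k,2}<0$ in the high-polarity regime $\hat{w}_{1}^{[k]}\ll\hat{w}_{2}^{[k]}$ of the figure).

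It then remains to prove minimality, that no eigenvalue of $\bm{X}_{k}$ lies below $-\overline{\lambda}_{k,2}$, and this is the main obstacle: equitable-partition quotients only guarantee a subset of eigenvalues and need not capture the extreme ones in general. I would settle it for the two named graphs by exploiting their (block-)circulant structure. The bilayer ring $\cG_{\text{2D}}$ and its $\cG_{\text{3D}}$ analogue have translation symmetry along each layer, so $\bm{X}_{k}$ is diagonalised by the discrete Fourier basis, reducing its spectrum to the eigenvalues of a small explicit symbol matrix parametrised by the wavenumber. Evaluating this symbol, using $\hat{w}_{1}^{[k]},\hat{w}_{2}^{[k]}>0$ and the row-stochastic normalisation of $\bm{W}_{k}$, one checks that the least eigenvalue over all wavenumbers is attained at zero wavenumber and equals $1-a_{k}-b_{k}=-\overline{\lambda}_{k,2}$, the value already located above. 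Combining $\max \lp \Sp \lp \bm{X}_{k} \rp \rp=1$, the membership $-\overline{\lambda}_{k,2}\in \Sp \lp \bm{X}_{k} \rp$, and this lower bound gives $-\overline{\lambda}_{k,2}=\min \lp \Sp \lp \bm{X}_{k} \rp \rp$, and hence, via the bipartite symmetry, the advertised spectral gap $\lp \overline{\lambda}_{k,2},-\overline{\lambda}_{k,2} \rp$ of $\bm{W}_{k}$ about the origin.
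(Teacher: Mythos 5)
Your route to the membership statement $-\overline{\lambda}_{k,2}\in\Sp\lp\bm{X}_{k}\rp$ is essentially the paper's: both arguments pass to the bipartite canonical form, lift the quotient eigenvector $\overline{\bm{v}}_{k,2}$ through the equitable partition $\pi_{2}$, and apply the sign-flip symmetry of bipartite spectra. Where you genuinely diverge is the minimality step. The paper evaluates the Rayleigh quotient of $\bm{X}_{k}$ at the alternating vector $\tilde{\bm{\nu}}_{1}$ and argues that, since $\bm{X}_{k}$ is entrywise nonnegative and cyclic tridiagonal, the quadratic form is minimised by a vector whose consecutive entries alternate in sign. You instead diagonalise the circulant (respectively block-circulant) $\bm{X}_{k}$ in the discrete Fourier basis and read off the entire spectrum from the symbol. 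Your route is arguably cleaner for these two graphs: it makes the separate membership step redundant and avoids the paper's somewhat informal claim that ``no other eigenvector has this alternating sign structure''. It is available precisely because the named graphs are translation invariant along each layer, so it trades generality for explicitness.

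There is, however, a concrete error in your evaluation of the symbol. For $\cG_{\text{2D}}$ the biadjacency matrix is the circulant with first row $\lp \hat{w}_{2}^{[k]},\hat{w}_{1}^{[k]},0,\dots,0,\hat{w}_{1}^{[k]}\rp$, so its eigenvalues are $\hat{w}_{2}^{[k]}+2\hat{w}_{1}^{[k]}\cos\lp 2\pi j/|\cL_{1}|\rp$. At zero wavenumber this equals the row sum $\hat{w}_{2}^{[k]}+2\hat{w}_{1}^{[k]}=1$, which is the \emph{maximum} of the spectrum, not $1-a_{k}-b_{k}$; as written your step is internally inconsistent. The minimum is attained at the Nyquist mode $j=|\cL_{1}|/2$ (which exists because bipartiteness forces each layer cycle to have even length), where the Fourier vector is exactly the paper's alternating vector $\tilde{\bm{\nu}}_{1}$ and the eigenvalue is $\hat{w}_{2}^{[k]}-2\hat{w}_{1}^{[k]}=1-a_{k}-b_{k}=-\overline{\lambda}_{k,2}$; for $\cG_{\text{3D}}$ the minimiser is likewise the wavenumber $\lp\pi,\pi\rp$. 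With that correction your argument closes. A final small point: the identity $-\overline{\lambda}_{k,2}=\min\Sp\lp\bm{X}_{k}\rp$ holds for all $w_{1}^{[k]},w_{2}^{[k]}>0$, so your parenthetical restriction to the high-polarity regime where $\overline{\lambda}_{k,2}<0$ is unnecessary here; that hypothesis only enters the subsequent spectral-gap corollary.
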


\begin{proof} %$\cG_{\text{3D}}$  is the cartesian product of the line of cells and  $\cG_{\text{2D}}$ 
The proof for $\cG_{\text{2D}}$ is given as the argument follows identically for $\cG_{\text{3D}}$.  As we make use of the biadjacency form of $\bm{W}_{\text{2D}}$,  we first construct the biadjacency transformation $\bm{U}$.  The bipartite bilayer graph $\cG_{\text{2D}}$ has vertex indices in layer-wise order as defined in Section \ref{sec:model_definition} with block adjacency matrices given in Example \ref{example_io_system_2}.  To reorder the vertices of $\cG_{\text{2D}}$ such that vertex groups $V_{1}$ and $V_{2}$ are ordered consecutively,  we define the permutation matrix
\begin{equation}
\bm{U} = \begin{bmatrix}
\bm{I}_{|\cL_{1}|/2} \otimes \bm{D}_{1} & \bm{I}_{|\cL_{1}| /2} \otimes \bm{D}_{2}  \\
\bm{I}_{|\cL_{2}|/2} \otimes \bm{D}_{2} & \bm{I}_{|\cL_{2}|/2} \otimes \bm{D}_{1}  
\end{bmatrix},
\end{equation}
where $|\cL_{1}| = |\cL_{2}|$ as each layer has the same number of vertices.  In particular we have the biadjacency form
\begin{align}\label{eqn:biadj_form}
\bm{U}^{T} \bm{W}_{\text{2D}} \bm{U} &=  \begin{bmatrix}
\bm{I}_{|\cL_{1}|/2} \otimes \bm{D}_{1} & \bm{I}_{|\cL_{1}| /2} \otimes \bm{D}_{2}  \\
\bm{I}_{|\cL_{2}|/2} \otimes \bm{D}_{2} & \bm{I}_{|\cL_{2}|/2} \otimes \bm{D}_{1}  
\end{bmatrix}
\begin{bmatrix}
\widehat{\bm{W}}_{1,\cL_{1}} & \widehat{\bm{W}}_{2,\cL_{1}} \\
\widehat{\bm{W}}_{2,\cL_{1}}^{T} & \widehat{\bm{W}}_{1,\cL_{2}}
\end{bmatrix}
 \begin{bmatrix}
\bm{I}_{|\cL_{1}|/2} \otimes \bm{D}_{1} & \bm{I}_{|\cL_{1}| /2} \otimes \bm{D}_{2}  \\
\bm{I}_{|\cL_{2}|/2} \otimes \bm{D}_{2} & \bm{I}_{|\cL_{2}|/2} \otimes \bm{D}_{1}  
\end{bmatrix}, \nonumber \\ 
&= 
 \begin{bmatrix}
0 & \bm{X}_{\text{2D}}  \\
\bm{X}_{\text{2D}} & 0 
\end{bmatrix},
\end{align}
for $\bm{X}_{\text{2D}}$ in cyclic tridiagonal form
\begin{equation}\label{eqn:X2d}
\bm{X}_{\text{2D}} =  \begin{bmatrix}
 \hat{w}_{2}^{[\text{2D}]} & \hat{w}_{1}^{[\text{2D}]} & 0 & \cdots & 0 & \hat{w}_{1}^{[\text{2D}]} \\
\hat{w}_{1}^{[\text{2D}]} &  \hat{w}_{2}^{[\text{2D}]}  & \hat{w}_{1}^{[\text{2D}]} & & &\\
& & \ddots & & & \\
& & & \ddots & & \\
& &  &\hat{w}_{1}^{[\text{2D}]} & \hat{w}_{2}^{[\text{2D}]}  &\hat{w}_{1}^{[\text{2D}]} \\
\hat{w}_{1}^{[\text{2D}]}&0 & \cdots  &0 &\hat{w}_{1}^{[\text{2D}]} &   \hat{w}_{2}^{[\text{2D}]} 
\end{bmatrix},
\end{equation}
noting that $\bm{X}_{\text{2D}}^{T} =\bm{X}_{\text{2D}} $ by the regularity of $\cG_{\text{2D}}$ and therefore $\bm{U}^{T} \bm{W}_{\text{2D}} \bm{U}$ is symmetric.

%give a brief summary of what the proof is doing
 As the laminar pattern template partition $\pi_{2}$ is equitable there exists a lifting matrix $\bm{L} \in \{0,1\}^{N\times2}$ that maps the large-scale adjacency matrix $\bm{W}_{\text{2D}}$ into its reduced form $\overline{\bm{W}}_{\text{2D}}$ such that
\begin{equation}\label{eqn:lifting_relation}
\bm{W}_{\text{2D}} \bm{L} = \bm{L} \overline{\bm{W}}_{\text{2D}},
\end{equation} 
as demonstrated in \cite{Godsil1997}.  The lifting transformation is constructed by grouping vertices associated with the partition $\pi_{2}$ on $\bm{W}_{\text{2D}}$ for example $\bm{L}_{ij} = 1$ if $v_{i} \in \cL_{j}$.  Owing to the block structure of $\bm{W}_{\text{2D}}$ (\ref{eqn:w_block_form}) which follows from the layer-wise vertex indexing,  we have that 
\begin{equation}
\bm{L} = \begin{bmatrix}
\bm{1}_{| \cL_{1} |, 1} & \bm{0}_{| \cL_{1} |,1} \\
\bm{0}_{| \cL_{2} |,1} & \bm{1}_{| \cL_{2} |,1} \\
\end{bmatrix}.
\end{equation}
Critically,  the lifting matrix $\bm{L}$ provides the algebraic link between the quotient and large-scale graphs.  \par

Following from the regular structure of $\cG_{\text{2D}}$ and direct computation,  the eigenvector associated with $\overline{\lambda}_{\text{2D},2}$ has the form $\overline{\bm{v}}_{\text{2D},2} = [1,  -1]^{T}$.  The spectral retention property of the equitable partition, $\pi_{2}$,  guarantees that $\overline{\lambda}_{\text{2D},2} \in \Sp \lp \bm{W}_{\text{2D}} \rp$ where $\bm{L}\overline{\bm{v}}_{\text{2D},2}$ is the corresponding eigenvector for the large-scale graph $\cG_{\text{2D}}$ (by Theorem 9.3.3 in \cite{godsil2001algebraicBook}).  Explicitly,  we have that the lifted eigenvector is of the form 
\begin{equation}
\bm{L}\overline{\bm{v}}_{\text{2D},2} = \begin{bmatrix}
\bm{1}_{| \cL_{1} |, 1} \\
-\bm{1}_{| \cL_{2} |, 1}
\end{bmatrix},
\end{equation}
with associated eigenvalue $\overline{\lambda}_{\text{2D},2}$.  In the biadjacency matrix form (\ref{eqn:biadj_form}),  the corresponding eigenvector has the transformed representation
\begin{equation}
\bm{\nu} \coloneqq \bm{U}^{T} \bm{L}\overline{\bm{v}}_{\text{2D},2}  = \begin{bmatrix}
 \smash[b]{\blockTwo{|\cL_{1}|}} &  \smash[b]{\blockTwo{|\cL_{2}|}}
\end{bmatrix}^{T}.
\vspace{0.9em}
\end{equation}
The spectral symmetry of bipartite graphs ensures that if there exists an eigenpair  $ \lp \overline{\lambda}_{\text{2D},2} , \bm{\nu}  = [\bm{x},\bm{y}^{T}]\rp$ then there must also exist the eigenpair $ \lp -\overline{\lambda}_{\text{2D},2}, \tilde{ \bm{\nu} } = [\bm{x},-\bm{y}^{T}]\rp$  \cite{godsil2001algebraicBook}. Therefore the eigenvector associated with $- \lambda_{\text{2D},2}$ has biadjacency form
\begin{equation}
\tilde{\bm{\nu}}  = \begin{bmatrix}
 \smash[b]{\blockTwo{|\cL_{1}|}} &  \smash[b]{\blockOne{|\cL_{2}|}}
\end{bmatrix}^{T}.
\vspace{1.2em}
\end{equation}
which negates the signs of those entries associated with the latter half of the vertices in $\cG_{\text{2D}}$.  Subsequently,  the first $|\cL_{1}|$ entries of $\tilde{\bm{\nu}}$ are an eigenvector of $\bm{X}_{\text{2D}}$ with eigenvalue $- \overline{\lambda}_{\text{2D},2}$ following from the canonical bidjacency representation of $\bm{W}_{\text{2D}}$ (\ref{eqn:biadj_form}).  We denote this reduced eigenvector in normalised form
\begin{equation}
\tilde{\bm{\nu}}_{1}  = \frac{1} { \sqrt{|\cL_{1}|}} \begin{bmatrix}
 \smash[b]{\blockTwo{|\cL_{1}|}}
\end{bmatrix}^{T},
\vspace{1.2em}
\end{equation}
and therefore it remains to show that the eigenpair $\lp - \overline{\lambda}_{\text{2D},2},   \tilde{\bm{\nu}}_{1}  \rp$ is minimal in the spectrum of $\bm{X}_{\text{2D}}$.  \par
The Rayleigh quotient for $\bm{X}_{\text{2D}}$ is defined by
\begin{equation}
R_{\bm{X}_{\text{2D}}} \lp \bm{y} \rp  = \frac{\bm{y}^{T} \bm{X}_{\text{2D}} \bm{y}}{\bm{y}^{T} \bm{y}}
\end{equation}
and as $\bm{X}_{\text{2D}}$ is real and symmetric by the Min-Max theorem the Rayleigh quotient is bounded by the maximal and minimal eigenvalues of the matrix,  $R_{\bm{X}_{\text{2D}}} \lp \bm{y} \rp \in [\lambda_{\min}, \lambda_{\max}]$ \cite{godsil2001algebraicBook}.  In particular,  $R_{\bm{X}_{\text{2D}}} \lp \bm{y} \rp $ generates the eigenvalues of $\bm{X}_{\text{2D}}$ when $\bm{y}$ is the respective eigenvector.  Hence we show that $\tilde{\bm{\nu}}_{1} $ minimises $R_{\bm{X}_{\text{2D}}} \lp \bm{y} \rp$,  namely 
\begin{equation}
\argmin _{ \substack {\bm{y} \in  \R^{|\cL_{1}|} \\
|| \bm{y} || = 1}}    \lp R_{\bm{X}_{\text{2D}}} \lp \bm{y} \rp  \rp =\tilde{\bm{\nu}}_{1} .
\end{equation}
where the normality constraint follows from $\bm{X}_{\text{2D}}$ being real and symmetric and so the eigenvectors of $\bm{X}_{\text{2D}}$ are orthonormal with real eigenvalues.\par

The normalised form of $\tilde{\bm{\nu}}_{1}$ yields $\tilde{\bm{\nu}}_{1}^{T}\tilde{\bm{\nu}}_{1}  = 1$ and therefore the Rayleigh quotient evaluated at $\tilde{\bm{\nu}}_{1}$ simplifies to $R_{\bm{X}_{\text{2D}}} \lp \tilde{\bm{\nu}}_{1} \rp = \tilde{\bm{\nu}}_{1}^{T}  \bm{W}_{\text{2D}}  \tilde{\bm{\nu}}_{1}$.  By direct computation we have that 
\begin{align}
\tilde{\bm{\nu}}_{1}^{T}  \bm{X}_{\text{2D}}  \tilde{\bm{\nu}}_{1} &= \sum_{k = 1}^{|\cL_{1}|} \sum_{j=1}^{|\cL_{1}|} \lp  \bm{X}_{\text{2D}}  \rp_{k,j}    \lp  \tilde{\bm{\nu}}_{1} \rp_{k}   \lp \tilde{\bm{\nu}}_{1} \rp_{j} \nonumber \\ 
&= \sum_{i = 1}^{|\cL_{1}|} \lp   \bm{X}_{\text{2D}} \rp_{ii}  \lp \tilde{\bm{\nu}}_{1} \rp_{i}^{2} +  \sum_{i = 2}^{|\cL_{1}|-1}  \lp \tilde{\bm{\nu}}_{1} \rp_{i}  \lp \lp \bm{X}_{\text{2D}} \rp_{i,i-1}  \lp \tilde{\bm{\nu}}_{1} \rp_{i-1} + \lp \bm{X}_{\text{2D}} \rp_{i,i+1}  \lp \tilde{\bm{\nu}}_{1} \rp_{i+1}  \rp \nonumber \\
&	\quad \quad \quad \quad \quad \quad \quad \quad \quad \,  +   \lp \tilde{\bm{\nu}}_{1} \rp_{1} \lp \lp \bm{X}_{\text{2D}} \rp_{1,2} \lp \tilde{\bm{\nu}}_{1} \rp_{2} + \lp \bm{X}_{\text{2D}} \rp_{1,|\cL_{1}|}  \lp \tilde{\bm{\nu}}_{1} \rp_{|\cL_{1}|} \rp \nonumber \\
&	\quad \quad \quad \quad \quad \quad \quad \quad \quad \,  +  \lp \tilde{\bm{\nu}}_{1} \rp_{|\cL_{1}|} \lp \lp \bm{X}_{\text{2D}} \rp_{|\cL_{1}|,1} \lp \tilde{\bm{\nu}}_{1} \rp_{1} + \lp \bm{X}_{\text{2D}} \rp_{|\cL_{1}|,|\cL_{1}|-1}  \lp \tilde{\bm{\nu}}_{1} \rp_{|\cL_{1}|-1} \rp
\end{align}
by the cyclic tridiagonal form of $\bm{X}_{\text{2D}} $ (\ref{eqn:X2d}).  Critically as $\lp \bm{X}_{\text{2D}} \rp_{i,j} \geq 0 $ for all $i,j$,  then $\tilde{\bm{\nu}}_{1}^{T}  \bm{X}_{\text{2D}}  \tilde{\bm{\nu}}_{1}$ is minimised when $\text{Sgn}\lp \lp\tilde{\bm{\nu}}_{1}\rp_{k} \rp \neq \text{Sgn}\lp \lp\tilde{\bm{\nu}}_{1}\rp_{k+1} \rp $ for all $k \in \{1,..., |\cL_{1}|-1 \}$ which is satisfied by definition of $\tilde{\bm{\nu}}_{1}$.  Furthermore,  the orthonormal property of the eigenbasis of $\bm{X}_{\text{2D}}$ ensures that no other eigenvector has this alternating sign structure which implies that $-\overline{\lambda}_{\text{2D},2}$ is the smallest eigenvalue of $\bm{X}_{\text{2D}}$.

\end{proof}

A consequence of Lemma \ref{lemma: biadj_lower_bound} is the existence of a spectral gap about the origin for $\cG_{\text{2D}}$ and $\cG_{\text{3D}}$.

\begin{theorem}
Let $\cG_{k}$ and $\cG_{k,\pi_{2}}$ be defined as in Lemma \ref{lemma: biadj_lower_bound} and let $ \lambda_{k,j} \in \Sp \lp  \bm{W}_{k} \rp$.  If $\overline{\lambda}_{k,2} < 0$ then $\lambda_{k,j}  \not\in  \lp \overline{\lambda}_{k,2} , -\overline{\lambda}_{k,2} \rp $.
\end{theorem}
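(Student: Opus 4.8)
The plan is to derive the statement as a direct corollary of Lemma \ref{lemma: biadj_lower_bound} together with the spectral symmetry of bipartite graphs established at the start of Appendix \ref{Appendix:spec_gap}. First I would recall that, since each $\cG_{k}$ ($k = \text{2D}, \text{3D}$) is bipartite, the permutation $\bm{U}$ brings $\bm{W}_{k}$ into biadjacency form, so that $\Sp \lp \bm{W}_{k} \rp = \Sp \lp \bm{X}_{k} \rp \cup \Sp \lp -\bm{X}_{k} \rp$; consequently the eigenvalues of $\bm{W}_{k}$ occur in sign-symmetric pairs $\pm \mu_{k,j}$ with $\mu_{k,j} \in \Sp \lp \bm{X}_{k} \rp$.

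Next I would invoke Lemma \ref{lemma: biadj_lower_bound}, which identifies $-\overline{\lambda}_{k,2}$ as the minimal eigenvalue of $\bm{X}_{k}$. Under the hypothesis $\overline{\lambda}_{k,2} < 0$ we have $-\overline{\lambda}_{k,2} > 0$, so every eigenvalue $\mu_{k,j}$ of $\bm{X}_{k}$ satisfies $\mu_{k,j} \geq -\overline{\lambda}_{k,2} > 0$. Translating this bound through the sign-symmetric pairing, each eigenvalue of $\bm{W}_{k}$ arising from $\bm{X}_{k}$ is at least $-\overline{\lambda}_{k,2}$, while each eigenvalue arising from $-\bm{X}_{k}$, being of the form $-\mu_{k,j}$, is at most $\overline{\lambda}_{k,2}$.

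Combining the two bounds yields that every $\lambda_{k,j} \in \Sp \lp \bm{W}_{k} \rp$ lies outside the open interval $\lp \overline{\lambda}_{k,2}, -\overline{\lambda}_{k,2} \rp$, which is precisely the claimed spectral gap about the origin. The argument therefore reduces entirely to the lower bound on $\Sp \lp \bm{X}_{k} \rp$: the genuine analytical content, namely showing that the alternating-sign eigenvector $\tilde{\bm{\nu}}_{1}$ minimises the Rayleigh quotient of $\bm{X}_{k}$, has already been carried out in Lemma \ref{lemma: biadj_lower_bound}. I do not anticipate any further obstacle, since once that minimality is in hand the present statement is a short corollary; the only point requiring mild care is the bookkeeping of which eigenvalues of $\bm{W}_{k}$ descend from $\bm{X}_{k}$ and which from $-\bm{X}_{k}$, so as to confirm the excluded interval is symmetric about the origin.
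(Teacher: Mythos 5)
Your argument is correct and is essentially the paper's own proof: both deduce from Lemma \ref{lemma: biadj_lower_bound} that $-\overline{\lambda}_{k,2} = \min \Sp\lp \bm{X}_{k}\rp > 0$ and then use the bipartite spectral symmetry $\Sp\lp \bm{W}_{k}\rp = \Sp\lp \bm{X}_{k}\rp \cup \Sp\lp -\bm{X}_{k}\rp$ to exclude the open interval about the origin. Your bookkeeping of which eigenvalues descend from $\bm{X}_{k}$ versus $-\bm{X}_{k}$ is just a slightly more explicit phrasing of the paper's observation that $\overline{\lambda}_{k,2}$ is the largest negative eigenvalue of $\bm{W}_{k}$.
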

\begin{proof}
From Lemma \ref{lemma: biadj_lower_bound} we have that $-\overline{\lambda}_{k,2} = \min \lp \mu_{k,j} : \mu_{k,j} \in  \Sp \lp \bm{X}_{k} \rp \rp$ and thus $-\overline{\lambda}_{k,2} > 0$.  From the symmetry of the spectrum of bipartite graphs we have that $\overline{\lambda}_{k,2} < 0$ is the maximum of the negative eigenvalues of $\bm{W}_{k}$ therefore defining a region about the origin bounded by $\overline{\lambda}_{k,2}$ and $-\overline{\lambda}_{k,2}$ that contains no eigenvalues.
\end{proof}

\end{document}